\renewcommand\showkeyslabelformat[1]{}
\newtheorem{theorem}{Theorem}[section]
\newtheorem{lemma}[theorem]{Lemma}
\newtheorem*{lemma*}{Lemma}
\newtheorem{corollary}[theorem]{Corollary}
\newtheorem{remark}[theorem]{Remark}
\numberwithin{equation}{section}
\newcommand{\labitem}[2]{%
\def\@itemlabel{\textbf{#1}}
\item
\def\@currentlabel{#1}\label{#2}}
\newcommand{\norm}[1]{\left\|{#1}\right\|}
\newcommand{\abs}[1]{\left|{#1}\right|}
\newcommand{\rkla}[1]{{\left(#1\right)}}
\newcommand{\trkla}[1]{{(#1)}}
\newcommand{\gkla}[1]{{\left\{#1\right\}}}
\newcommand{\tgkla}[1]{{\{#1\}}}
\newcommand{\ekla}[1]{{\left[#1\right]}}
\newcommand{\tekla}[1]{{[#1]}}
\newcommand{\tabs}[1]{|{#1}|}
\newcommand{\bs}[1]{\boldsymbol{#1}}
\newcommand{\iOmega}{\int_{\Omega}}
\newcommand{\iOmegaT}{{\int_0^T\!\!\!\!\int_{\Omega}}}
\newcommand{\iGamma}{\int_{\Gamma}}
\newcommand{\iGammaT}{\int_0^T\!\!\!\!\int_{\Gamma}}
\newcommand{\para}[1]{\partial _{#1}}
\newcommand{\dtau}{\para{\tau}^-}
\newcommand{\ddt}{\frac{\mathrm{d}}{\mathrm{d}t}}
\newcommand{\dx}{\; \mathrm{d}\bs{x}}
\newcommand{\dt}{\; \mathrm{d}t}
\newcommand{\dG}{\; \mathrm{d}\Gamma}
\renewcommand{\div}{\operatorname{div}}
\newcommand{\nablaG}{\nabla_\Gamma}
\newcommand{\DeltaG}{\Delta_\Gamma}
\newcommand{\nn}{^{n}}
\newcommand{\no}{^{n-1}}
\newcommand{\tl}{^{\tau}}
\newcommand{\tp}{^{\tau,+}}
\newcommand{\tm}{^{\tau,-}}
\newcommand{\tpm}{^{\tau,(\pm)}}
\newcommand{\h}{_{h}}
\newcommand{\hx}{_{h,\xi}}
\newcommand{\weak}{\rightharpoonup}
\newcommand{\weakstar}{\stackrel{*}{\rightharpoonup}}
\newcommand{\UhO}{U\h^\Omega}
\newcommand{\UhG}{U\h^\Gamma}
\newcommand{\Th}{\mathcal{T}_h}
\newcommand{\ThG}{\mathcal{T}_h^\Gamma}
\newcommand{\diam}{\operatorname{diam}}
\newcommand{\Ihop}{\mathcal{I}_h}
\newcommand{\Ih}[1]{\Ihop\gkla{#1}}
\newcommand{\IhGop}{\mathcal{I}_h^\Gamma}
\newcommand{\IhG}[1]{\IhGop\gkla{#1}}
\newcommand{\restr}[2]{\ensuremath{
  \left.\kern-\nulldelimiterspace 
  #1 
  \vphantom{\big|} 
  \right|_{#2} 
  }}
\newcommand{\extend}[2]{\ensuremath{
  \left.\kern-\nulldelimiterspace 
  #1 
  \vphantom{\big|} 
  \right|^{#2} 
  }}
\newcommand{\mobO}{m}
\newcommand{\mobG}{m_{\Gamma}}
\DeclareMathOperator*{\esssup}{ess\,sup}
\DeclareMathOperator*{\Span}{span}
\newcounter{IMASTYLE}
\begin{document}
\title[]{A convergent SAV scheme for Cahn--Hilliard equations with dynamic boundary conditions}
\date{\today}
\author[S.~Metzger]{Stefan Metzger}
\address{Friedrich--Alexander Universität Erlangen--Nürnberg,~Cauerstraße 11,~91058~Erlangen,~Germany}
\email{stefan.metzger@fau.de}

\begin{abstract}
The Cahn--Hilliard equation is one of the most common models to describe phase separation processes in mixtures of two materials. For a better description of short-range interactions between the material and the boundary, various dynamic boundary conditions for this equation have been proposed. 
Recently, a family of models using Cahn--Hilliard-type equations on the boundary of the domain to describe adsorption processes was analysed (cf. Knopf, Lam, Liu, Metzger, ESAIM:~Math.~Model.~Numer.~Anal., 2021).
This family of models includes the case of instantaneous adsorption processes studied by Goldstein, Miranville, and Schimperna (Physica D, 2011) as well as the case of vanishing adsorption rates which was investigated by Liu and Wu (Arch. Ration. Mech. Anal., 2019).
In this paper, we are interested in the numerical treatment of these models and propose an unconditionally stable, linear, fully discrete finite element scheme based on the scalar auxiliary variable approach.
Furthermore, we establish the convergence of discrete solutions towards suitable weak solutions of the original model.
Thereby, when passing to the limit, we are able to remove the auxiliary variables introduced in the discrete setting completely.
Finally, we present simulations based on the proposed linear scheme and compare them to results obtained using a stable, non-linear scheme to underline the practicality of our scheme.
\end{abstract}

\keywords{Cahn--Hilliard, dynamic boundary conditions, finite elements, convergence, scalar auxiliary variable}
\subjclass[2010]{35Q35, 35G31, 65M60, 65M12}
%
%
\selectlanguage{english}

\maketitle
\section{Introduction}

The Cahn--Hilliard equation, which was originally introduced by \ifnum\value{IMASTYLE}>1 {\cite{Cahn1958}\else {Cahn and Hilliard \cite{Cahn1958}\fi, serves in many different applications.
In addition to their original purpose, the description of phase separation and de-mixing processes in binary alloys, Cahn--Hilliard-type equations are also used to model phenomena arising in material sciences, life sciences and image processing.
As certain applications like contact line problems in hydrodynamical applications require an accurate description of the short-range interactions of the binary mixture with the solid wall of the container, several dynamic boundary conditions have been proposed and investigated in the literature.\\
In an open domain $\Omega\subset\mathds{R}^d$ ($d\in\tgkla{2,3}$) with boundary $\Gamma:=\partial\Omega$, the Cahn--Hilliard equation with the classical homogeneous Neumann boundary conditions reads
\begin{subequations}\label{eq:cahn-hilliard}
\begin{align}
\para{t}\phi&=\mobO\Delta\mu&&\text{in~}\Omega\times\trkla{0,T}\,,\\
\mu&=-\varepsilon\sigma\Delta\phi+\varepsilon^{-1}\sigma {F}^\prime\trkla{\phi}&&\text{in~}\Omega\times\trkla{0,T}\,,\label{eq:cahn-hilliard:mu}\\
\nabla\mu\cdot\bs{n}&=0&&\text{on~}\Gamma\times\trkla{0,T}\,,\label{eq:CH:bc:noflux}\\
\nabla\phi\cdot\bs{n}&=0&&\text{on~}\Gamma\times\trkla{0,T}\,,\label{eq:CH:bc:angle}
\end{align}
\end{subequations}
in combination with suitable initial conditions for the phase-field parameter $\phi$.
Here, the parameter $\sigma$ is related to the surface tension, $\mobO$ denotes the mobility constant in the bulk, $\varepsilon>0$ is a small parameter related to the thickness of the interfacial region, and the chemical potential $\mu$ is given as the first variation of the free energy
\begin{align}\label{eq:EOmega}
\mathcal{E}_\Omega\trkla{\phi}:=\varepsilon\sigma\iOmega\tfrac12\tabs{\nabla\phi}^2\dx+\varepsilon^{-1}\sigma\iOmega F\trkla{\phi}\dx\,,
\end{align}
where $F$ is a double-well potential with minima in (or close to) $\phi=\pm1$ representing the pure fluid phases.
Typical choices for this double-well potential are the logarithmic double-well potential
\begin{align}
W_{\log}\trkla{\phi}:=\tfrac{\vartheta}2\trkla{1+\phi}\log\trkla{1+\phi}+\tfrac{\vartheta}2\trkla{1-\phi}\log\trkla{1-\phi}-\tfrac{\vartheta_c}2\phi^2
\end{align}
with $0<\vartheta<\vartheta_c$, the double obstacle potential
\begin{align}
W_{\operatorname{obst}}\trkla{\phi}:=\left\{\begin{matrix}
\vartheta\trkla{1-\phi^2}&\phi\in\tekla{-1,+1}\\\infty&\text{else}
\end{matrix}\right.&&\text{with~}\vartheta>0\,,
\end{align}
and the polynomial double-well potential $W_{\operatorname{pol}}\trkla{\phi}:=\tfrac14\trkla{\phi^2-1}^2$, which in practical computations may be enhanced by an additional penalty term of the form $C_{\operatorname{pen}}\max\gkla{\tabs{\phi}-1,\,0}^2$ with $C_{\operatorname{pen}}>\!\!\!>1$ forcing $\phi$ to stay closer to the physically relevant interval $\tekla{-1,+1}$.
Cahn--Hilliard equations with polynomial double-well potentials are investigated, e.g.~in \cite{Elliott86, Zheng1986, Pego89, BatesFife93, Grinfeld1995, RybkaHoffmann99}.
For Cahn--Hilliard equations with the singular potentials $W_{\operatorname{log}}$ and $W_{\operatorname{obst}}$, we refer the reader to \cite{Blowey1991, Elliott1996, AbelsWilke2007, Cherfils2011}.

The boundary condition \eqref{eq:CH:bc:noflux} states that there is no flux across $\Gamma$, i.e.~$\iOmega\phi\dx$ is conserved, while condition \eqref{eq:CH:bc:angle} indicates that the zero level set of $\phi$ representing the fluid-fluid interface intersects $\Gamma$ at a static contact angle of $\tfrac\pi2$.
This can be interpreted as negligence of the interactions between the fluids and the walls of the surrounding container.
To improve description of the interactions between the solid wall and the fluids, physicists suggested to add a surface free energy 
\begin{align}\label{eq:EGamma}
\mathcal{E}_\Gamma\trkla{\phi}=\delta\kappa\iGamma\tfrac12\tabs{\nablaG\phi}^2\dG+\delta^{-1}\iGamma {G}\trkla{\phi}\dG
\end{align}
which is also of Ginzburg--Landau type (cf.~\cite{Fischer1997,Fischer1998a,Kenzler2001}).
Here, $\nablaG$ denotes the surface gradient operator on $\Gamma$ and ${G}$ denotes a suitable boundary potential which might be chosen similar to ${F}$.
The parameter $\kappa\geq0$ describes the influence of surface diffusion and $\delta>0$ is related to the thickness of the interfacial regions on the boundary.
Dynamic boundary conditions that ensure that the total energy $\tekla{\mathcal{E}_\Omega+\mathcal{E}_\Gamma}$ is not increasing in time are for instance Allen--Cahn-type boundary conditions (cf. \cite{Fischer1997, Fischer1998, RackeZheng2003, Wu2004, ChillFasangovaPruess2006, Gilardi2009, Miranville2010, Cherfils2011, Liero2013, Colli2014, Colli2015, Mininni2017}), where \eqref{eq:CH:bc:angle} is replaced by
\begin{subequations}\label{eq:bc:AC}
\begin{align}
\para{t}\phi&=-\mobG\theta&&\text{on~}\Gamma\times\trkla{0,T}\,,\\
\theta&=-\kappa\delta\Delta_\Gamma\phi+\delta^{-1}{G}^\prime\trkla{\phi}+\varepsilon\sigma\nabla\phi\cdot\bs{n}&&\text{on~}\Gamma\times\trkla{0,T}\,.
\end{align}
\end{subequations}
Here, $\mobG$ is the mobility constant on the boundary and $\Delta_\Gamma$ denotes the Laplace-Beltrami operator describing surface diffusion on $\Gamma$.
In the case $\kappa=0$ this problem is related to the moving contact line problem discussed in \cite{Qian2006}.
In this work, however, we shall always assume $\kappa=1$.
Other approaches additionally replaced \eqref{eq:CH:bc:noflux} by enforcing that $\theta$ equals the trace of $\mu$ (cf.~\cite{Gal2006}).\\
In addition to Allen--Cahn-type dynamic boundary conditions, different Cahn--Hilliard-type boundary conditions have also been proposed (see e.g.~\cite{Goldstein2011, Liu2019, KnopfLamLiuMetzger2021}).
In this publication, we discuss the numerical treatment of the following phase-field model with rate dependent dynamic boundary conditions which was introduced in \cite{KnopfLamLiuMetzger2021}:
\begin{subequations}\label{eq:model}
\begin{align}
\para{t}\phi&=\mobO\Delta\mu&&\text{in~} \Omega\times\trkla{0,T}\,,\\
\mu&=-\varepsilon\sigma\Delta\phi+\varepsilon^{-1}\sigma {F}^\prime\trkla{\phi}&&\text{in~} \Omega\times\trkla{0,T}\,,\\
\para{t}\phi&=\mobG\DeltaG\theta -\beta\mobO\nabla\mu\cdot\bs{n}&&\text{on~}\Gamma\times\trkla{0,T}\,,\\
\theta&=-\delta\DeltaG\phi+\delta^{-1}{G}^\prime\trkla{\phi}+\varepsilon\sigma\nabla\phi\cdot\bs{n}&&\text{on~}\Gamma\times\trkla{0,T}\,,\\
\nabla\mu\cdot\bs{n}&=\xi\trkla{\beta\theta-\mu}&&\text{on~}\Gamma\times\trkla{0,T}\,,\label{eq:model:jump}\\
\phi\trkla{0}&=\phi_0&&\text{in~}\overline{\Omega}\,.
\end{align}
\end{subequations}
Here, $\xi\in\trkla{0,\infty}$ can be interpreted as an adsorption rate.
In the case of instantaneous adsorption, i.e.~$\xi=\infty$, \eqref{eq:model:jump} is understood as $\beta\theta\equiv\mu$ on $\Gamma\times\trkla{0,T}$, which allows us to recover the model proposed in \cite{Goldstein2011}.
There, the potential on the boundary $\theta$ coincides with the trace of the bulk potential $\mu$ up to a weight function $\beta> 0$ which we will assume to be constant.
For $\xi=0$, on the other hand, we recover the model proposed in \cite{Liu2019} describing the scenario of a vanishing adsorption rate.
In both cases, the total mass of $\phi$ is conserved, i.e.
\begin{align}
\ddt\ekla{\iOmega\phi\dx+\beta^{-1}\iGamma\phi\dG}=0\,.
\end{align}
In the case of vanishing adsorption rates ($\xi=0$), however, mass transfer between the bulk and the boundary is prohibited, resulting additionally in individual conservation of $\phi$, i.e.
\begin{align}
\ddt\iOmega\phi\dx=0&&\text{and}&&\ddt\iGamma\phi\dG=0\,.
\end{align}
In the case $\xi=0$, the well-posedness of \eqref{eq:model} was studied in \cite{Liu2019} and \cite{GarckeKnopf20}.
The long-time behavior of this scenario was investigated in \cite{Liu2019} and \cite{MiranvilleWu20}.
Numerical analysis and simulations for this case of vanishing adsorption rates can be found in \cite{GarckeKnopf20}, \cite{Metzger2021a}, and \cite{BaoZhang21a}.\\
The well-posedness of \eqref{eq:model} in the general case $\xi\in\tekla{0,\infty}$ and the asymptotic limits $\xi\searrow0$ and $\xi\nearrow\infty$ were discussed in \cite{KnopfLamLiuMetzger2021}.
The long term behavior of this general case was studied in \cite{GarckeKnopfYayla2022}.
For the numerical treatment and the convergence properties of discrete solutions, we refer to \cite{KnopfLamLiuMetzger2021} and \cite{BaoZhang21b}.\\
Other variants of dynamic boundary conditions include Robin-type conditions for $\phi$ (cf.~\cite{KnopfLam20}) or non-local Cahn--Hilliard-type equations (cf.~\cite{KnopfSignori21}).\\

In this publication, we discuss the numerical treatment of \eqref{eq:model} and propose a new fully discrete, convergent finite element scheme based on the scalar auxiliary variable approach.
The numerical treatment of plain Cahn--Hilliard equations \eqref{eq:cahn-hilliard} and theirs variants -- often in combination with Navier--Stokes-equations -- was intensely discussed through the recent years, resulting in various different discretization techniques.
Although, we will focus on non-singular potentials, we do not want to conceal that there are also numerical schemes at hand which are able to deal with the singular potentials $W_{\operatorname{log}}$ and $W_{\operatorname{obst}}$ (see e.g.~\cite{CopettiElliott92,Blowey1992, Blowey1996,Barrett1999,Barrett2001, Frank2020}).\\
Discretization techniques, which transfer the energy stability of the Cahn--Hilliard equation to a discrete setting without imposing restrictions on the size of the time increment $\tau>0$, include approaches based on convex-concave splittings of the energy or the polynomial double-well potential (cf.~\cite{Elliott1993, Eyre98, WiseWangLowengrub2009, ShenWangWangWise2012} and  \cite{Grun2013,Grun2013c,LiuShenYang2015,GarckeHinzeKahle2016, GrunGuillenMetzger2016} for an application to Cahn--Hilliard--Navier--Stokes-systems) and stabilized linearly implicit approaches (cf.~\cite{XuTang2006,ShenYang2010,ShenYang2010b,Nochetto2016, BaoZhang21a,BaoZhang21b}).
While the first one typically results in non-linear discrete systems, the second approach provides discrete systems that are linear with respect to the unknown quantities, but often relies on additional regularizations or large stabilization parameters.
To overcome these issues the method of invariant energy quadratization (IEQ), which is a generalization of the Lagrange multiplier method developed in \cite{Guillen-Tierra13} (see also \cite{Badia2011a}), was proposed (see e.g.~\cite{Yang2016,ChengYangShen2017, Han2017}).
For a positive potential $F$, this method produces linear and unconditionally stable schemes by introducing the auxiliary function $u=\sqrt{F\trkla{\phi}}$.
Replacing \eqref{eq:cahn-hilliard:mu} by
\begin{align}
\mu=-\varepsilon\sigma\Delta\phi+\frac{\sigma}{\varepsilon}\frac{F^\prime\trkla{\phi}}{\sqrt{F\trkla{\phi}}} u
\end{align}
provides more leeway with respect to the time discretization of the non-linear term.
On the other hand, however, one additionally has to track the evolution of $u$ which according to the chain rule is formally given by
\begin{align}\label{eq:evolution:ieq}
\para{t}u=\frac{\mathrm{d}}{\mathrm{d}t} \sqrt{F\trkla{\phi}}=\frac{F^\prime\trkla{\phi}}{2\sqrt{F\trkla{\phi}}}\para{t}\phi\,.
\end{align}

Recently, this technique was enhanced to the scalar auxiliary variable (SAV) approach (see e.g.~\cite{ShenXuYang2018,LiShen2020}).
The SAV approach simplifies the IEQ approach by introducing only the scalar auxiliary variable 
\begin{align}
{r}\trkla{t}:=\sqrt{\iOmega {F}\trkla{\phi}\dx}
\end{align}
to rewrite the non-linear term instead of an auxiliary function.
Therefore, we do not require a positive lower bound pointwise for $F$, but can relax this constraint to a positive lower bound for $\iOmega F\trkla{\phi}\dx$.
More importantly, however, it replaces \eqref{eq:evolution:ieq} by a single scalar valued evolution equation.
In particular, the evolution of the auxiliary variable given by
\begin{align}\label{eq:evolution:aux}
\para{t} {r}= \frac{1}{2\sqrt{\iOmega {F}\trkla{\phi}\dx}}\iOmega {F}^\prime\trkla{\phi}\para{t}\phi\dx\,.
\end{align}
Although IEQ and SAV schemes are very promising as they allow for unconditionally stable linear schemes and are already applied to many intricate problems (see e.g.~\cite{ShenXuYang19, ShenYang20} and the references therein), the additional auxiliary variables complicate the analytical treatment.
While many convergence results provide error estimates under the assumption that a sufficiently regular solution exists (see e.g.~\cite{LinCaoZhangSun20, YangZhang20, ShenXu18, LiShen2020}), convergence results without this assumption are scarce.
Assuming that the initial conditions for the phase-field parameter $\phi$ are in $H^4$, \ifnum\value{IMASTYLE}>1 {\cite{ShenXu18}} \else {Shen and Xu \cite{ShenXu18}}\fi were able to show that subsequences of discrete solutions converge (weakly) towards weak solutions of the continuous problem.
For an investigation of the asymptotic behaviour of SAV schemes, we refer the reader to \cite{BouchritiPierreAlaa20}.\\
From the methods listed above, by now only convex-concave splittings of the potentials (cf.~\cite{Metzger2021a,KnopfLamLiuMetzger2021}) and stabilized linearly implicit approaches (cf.~\cite{BaoZhang21a,BaoZhang21b}) have been applied to \eqref{eq:model}.
Numerical schemes using extrapolation techniques to discretize the non-linear terms were discussed e.g.~in \cite{HarderKovacs21}.
A finite difference model for the treatment of the Allen--Cahn-type dynamic boundary conditions \eqref{eq:bc:AC} was proposed in \cite{Kenzler2001}.
For an application of the IEQ approach to Allen--Cahn-type dynamic boundary conditions, we refer to \cite{YangYu2018}.\\

The outline of the paper is as follows.
In Section \ref{sec:scheme}, we introduce the discrete function spaces, collect our assumptions on the data, present the discrete scheme, and use its energy stability to prove the existence of a unique discrete solution.
In Section \ref{sec:regularity}, we establish improved regularity results, which will be used in Section \ref{sec:limit} to pass to the limit $\trkla{h,\tau,\xi}\rightarrow\trkla{0,0,\xi_0}$ and obtain the existence of a unique weak solution.
We conclude by presenting numerical simulations in Section \ref{sec:numerics}.
Thereby, we consider the test cases discussed in \cite{Metzger2021a} and \cite{KnopfLamLiuMetzger2021} and compare the results of the proposed SAV scheme with the results obtained from the splitting scheme analysed in \cite{Metzger2021a} and \cite{KnopfLamLiuMetzger2021}.\\

\underline{Notation:} Given the spatial domain $\Omega\subset\mathds{R}^d$ with $d\in\tgkla{2,3}$, we denote the space of $k$-times weakly differentiable functions with weak derivatives in $L^p\trkla{\Omega}$ by $W^{k,p}\trkla{\Omega}$.
For $p=2$, we will denote the Hilbert spaces $W^{k,2}\trkla{\Omega}$ by $H^k\trkla{\Omega}$.
For spaces defined on $\Gamma:=\partial\Omega$, we shall use a similar notation.
Throughout this publication, we shall always assume that $\Omega$ has a lipschitzian boundary.
In this case, the spaces $L^p\trkla{\Gamma}$ and $W^{1,p}\trkla{\Gamma}$ ($p\geq1$) are well-defined (cf.~\cite{Kufner77}) and the trace operator $\cdot\big\vert_{\Gamma}$ is uniquely defined and lies in $\mathcal{L}\trkla{W^{1,p}\trkla{\Omega}, W^{1-1/p,p}\trkla{\Gamma}}$ (cf.~\cite{Necas2012}).
For brevity, we will sometimes (in particular when the considered function is continuous) suppress the trace operator and write $v$ instead of $v\big\vert_\Gamma$.
In addition, we define the spaces
\begin{align}
\widetilde{\mathcal{L}}^p:= L^p\trkla{\Omega}\times L^p\trkla{\Gamma}&&\text{and}&&\mathcal{H}^1:= H^1\trkla{\Omega}\times H^1\trkla{\Gamma}
\end{align}
with $p\in\tekla{1,\infty}$.
Note that $\mathcal{H}^1$ is a Hilbert space with respect to the inner product
\begin{align}
\rkla{\trkla{\phi,\psi},\trkla{\zeta,\xi}}_{\mathcal{H}^1}:=\trkla{\phi,\zeta}_{H^1\trkla{\Omega}}+\trkla{\psi,\xi}_{H^1\trkla{\Gamma}}\qquad \text{for all~}\trkla{\phi,\psi},\trkla{\zeta,\xi}\in\mathcal{H}^1
\end{align}
and its induced norm $\norm{\cdot}_{\mathcal{H}^1}:=\sqrt{\trkla{\cdot,\cdot}_{\mathcal{H}^1}}$.
Furthermore, we introduce the following subspaces of $\mathcal{H}^1$:
\begin{align}
\mathcal{V}&:=\gkla{\trkla{\phi,\psi}\in\mathcal{H}^1\,:\,\phi\big\vert_{\Gamma}=\psi\text{~a.e.~on~}\Gamma}\,,\\
\mathcal{W}_{\beta,m}&:=\gkla{\trkla{\phi,\psi}\in\mathcal{V}\,:\,\iOmega\phi\dx+\beta^{-1}\iGamma\psi\dG=m}
\end{align}
for any $m\in\mathds{R}$ and $\beta\in\mathds{R}^+$.
Endowed with the inner product $\trkla{\cdot,\cdot}_{\mathcal{W}_{\beta,0}}:=\trkla{\cdot,\cdot}_{\mathcal{H}^1}$ and its induced norm, the space $\mathcal{W}_{\beta,0}$ is also a Hilbert space.\\
For a Banach space $X$ and a time interval $I$, the symbol $L^p\trkla{I;X}$ ($p\in\tekla{1,\infty}$) stands for the parabolic space of $L^p$-integrable functions on $I$ with values in $X$ and $H^1\trkla{I;X}$ stands for the space of weakly differentiable functions with weak derivatives in $L^2\trkla{I,X}$.
\section{A stable, fully discrete SAV scheme}\label{sec:scheme}
In this section, we shall propose a stable, fully discrete finite element approximation of \eqref{eq:model}.
In particular, we will discretize the following weak formulation of \eqref{eq:model}:
\begin{subequations}\label{eq:formal}
\begin{align}
\iOmegaT\para{t}\phi \psi\dx\dt + \iOmegaT m\nabla\mu\cdot\nabla\psi\dx\dt -\iGammaT \mobO\xi\trkla{\beta\theta-\mu}\psi\dG\dt=0\,,\label{eq:formal:1}\\
\iGammaT\para{t}\phi\eta\dG\dt+\iGammaT\mobG\nablaG\phi\cdot\nablaG\eta\dG\dt + \beta \mobO\iGammaT\xi\trkla{\beta\theta-\mu}\eta\dG\dt=0\,,\label{eq:formal:2}
\end{align}
\begin{align}
\begin{split}
\iOmegaT\mu\hat{\psi}\dx\dt+\iGammaT\theta\hat{\psi}\dG\dt=\varepsilon\sigma\iOmegaT\nabla\phi\cdot\nabla\hat{\psi}\dx\dt + \delta\iGammaT\nablaG\phi\cdot\nablaG\hat{\psi}\dG\dt\\
+\varepsilon^{-1}\sigma\int_0^T\frac{r\trkla{t}}{\sqrt{\iOmega F\trkla{\phi}\dx}}\iOmega F^\prime\trkla{\phi}\hat{\psi}\dx\dt +\delta^{-1}\int_0^T\frac{s\trkla{t}}{\sqrt{\iGamma G\trkla{\phi}\dG}}\iGamma G^\prime\trkla{\phi}\hat{\psi}\dG\dt
\end{split}
\end{align}
for appropriate test functions $\psi,\eta,\hat{\psi}$.
Here, we introduced the scalar auxiliary variables 
\begin{align*}
r\trkla{t}:=\sqrt{\iOmega F\trkla{\phi\trkla{t}}\dx}&&\text{and}&&s\trkla{t}:=\sqrt{\iGamma G\trkla{\phi\trkla{t}}\dG}\,.
\end{align*}
According to \eqref{eq:evolution:aux}, their evolution is given by
\begin{align}
&&\para{t} r&=\frac{1}{2\sqrt{\iOmega F\trkla{\phi}\dx}}\iOmega F^\prime\trkla{\phi}\para{t}\phi\dx\,,\label{eq:formal:auxr}&&
\text{and}&&\para{t} s&=\frac{1}{2\sqrt{\iGamma G\trkla{\phi}\dG}}\iGamma G^\prime\trkla{\phi}\para{t}\phi\dG.
\end{align}
\end{subequations}

To propose a fully discrete finite element approximation of \eqref{eq:formal}, we shall start by introducing the general assumptions, discrete function spaces and the interpolation operators used in the considered scheme.
Concerning the discretization in time, we assume that
\begin{itemize}
\labitem{\textbf{(T)}}{item:T} the time interval $I:=[0,T)$ is subdivided in intervals $I_{n}:=[t\no,t\nn)$ with $t\nn=t\no+\tau_{n}$ for time increments $\tau_{n}>0$ and $n=1,\ldots,N$ such that $t^N=T$. For simplicity, we take $\tau_n\equiv\tau=\tfrac{T}{N}$ for $n=1,\ldots,N$.
\end{itemize}
The spatial domain $\Omega\subset\mathds{R}^d$ is assumed to be bounded and convex.
Furthermore, we shall assume that $\Omega$ is polygonal (or polyhedral, respectively) to avoid additional technicalities.
The spatial discretization is based on partitions $\Th$ of $\Omega$ and $\ThG$ of $\Gamma:=\partial\Omega$ depending on a discretization parameter $h>0$ satisfying the following assumptions.
\begin{itemize}
\labitem{\textbf{(S1)}}{item:S1} Let $\tgkla{\Th}_{h>0}$ be a quasiuniform (in the sense of \cite{BrennerScott}) family of partitions of $\Omega$ into disjoint, open simplices $K$, satisfying
\begin{align*}
\overline{\Omega}\equiv\bigcup_{K\in\Th}\overline{K}&&\text{with}~\max_{K\in\Th}\diam\trkla{K}\leq h\,.
\end{align*}
\labitem{\textbf{(S2)}}{item:S2} Let $\tgkla{\ThG}_{h>0}$ be a quasiuniform family of partitions of $\Gamma$ into disjoint open simplices $K^\Gamma$, satisfying
\begin{align*}
&&\forall K^\Gamma&\in\ThG~\exists!K\in\Th~\text{such that}~\overline{K^\Gamma}=\overline{K}\cap\Gamma\,,\\
\text{and}&& \Gamma&\equiv\bigcup_{K^\Gamma\in\ThG}\overline{K^\Gamma}\qquad\text{with}~\max\diam\trkla{K^\Gamma}\leq h\,.
\end{align*}
\end{itemize}
The second assumption \ref{item:S2} implies that all elements in $\ThG$ are edges (or faces, respectively) of elements of $\Th$.
For the approximation of the phase-field $\phi$ and the chemical potential $\mu$ we use continuous, piecewise linear finite element functions on $\Th$. This space shall be denoted by $\UhO$ and is spanned by the functions $\tgkla{\chi_{h,k}}_{k=1,\ldots,\dim\UhO}$ forming a dual basis to the vertices $\tgkla{\bs{x}_k}_{k=1,\ldots,\dim\UhO}$ of $\Th$.
Due to the constraints on $\Th$ and $\ThG$, the space of continuous, piecewise linear finite element functions $\UhG$ on $\ThG$ is given by the traces of functions in $\UhO$, i.e.
\begin{align}
\UhG=\Span\tgkla{\psi\h\big\vert_\Gamma\,:\,\psi\h\in\UhO}.
\end{align}
This space can also be described as the span of functions $\tgkla{\chi^\Gamma_{h,k}}_{k=1,\ldots,\dim\UhG}$ which form a dual basis to the vertices $\tgkla{\bs{x}^\Gamma_k}_{k=1,\ldots,\dim\UhG} \subset \tgkla{\bs{x}_j}_{j=1,\ldots,\dim\UhO}$ of $\ThG$.\\
We define the nodal interpolation operators $\Ihop\,:\, C^0\trkla{\overline{\Omega}}\rightarrow \UhO$ and $\IhGop\,:\, C^0\trkla{\Gamma}\rightarrow \UhG$ by
\begin{align}
\Ih{a}:=\sum_{k=1}^{\dim\UhO}a\trkla{\bs{x}_k}\chi_{h,k}\,,&&\text{and}&&\IhG{a}:=\sum_{k=1}^{\dim\UhG}a\trkla{\bs{x}^\Gamma_k}\chi^\Gamma_{h,k}\,.
\end{align}

For future reference, we state the following norm equivalences for $p\in[1,\infty)$ and $f\h\in\UhO$ and $\tilde{f}\h\in\UhG$:
\begin{subequations}\label{eq:normequivalence}
\begin{align}
c\rkla{\iOmega\tabs{f\h}^p\dx}^{1/p}\leq \rkla{\iOmega\Ih{\tabs{f\h}^p}\dx}^{1/p}&\leq C\rkla{\iOmega\tabs{f\h}^p\dx}^{1/p}\,,\\
c\rkla{\iGamma\tabs{\tilde{f}\h}^p\dG}^{1/p}\leq\rkla{\iGamma\IhG{\tabs{\tilde{f}\h}^p}\dG}^{1/p}&\leq C\rkla{\iGamma\tabs{\tilde{f}\h}^p\dG}^{1/p}
\end{align}
\end{subequations}
wit $c,C>0$ independent of $h$.
Furthermore, the following estimates (that can be found in Lemma 2.1 in \cite{Metzger2020}) hold true:
\begin{lemma}\label{lem:Ih}
Let $\Th$ and $\ThG$ satisfy \ref{item:S1} and \ref{item:S2}, respectively.
Furthermore, let $p\in[1,\infty)$, $1\leq q\leq\infty$, and $q^*=\tfrac{q}{q-1}$ for $q<\infty$ or $q^*=1$ for $q=\infty$. Then
\begin{subequations}
\begin{align}
\norm{\trkla{1-\Ihop}\tgkla{f\h g\h}}_{L^p\trkla{\Omega}}&\leq C h^2\norm{\nabla f\h}_{L^{pq}\trkla{\Omega}}\norm{\nabla g\h}_{L^{pq^*}\trkla{\Omega}}\,,\\
\norm{\trkla{1-\Ihop}\tgkla{f\h g\h}}_{W^{1,p}\trkla{\Omega}}&\leq C h\norm{\nabla f\h}_{L^{pq}\trkla{\Omega}}\norm{\nabla g\h}_{L^{pq^*}\trkla{\Omega}}\,,\\
\norm{\trkla{1-\IhGop}\tgkla{\tilde{f}\h \tilde{g}\h}}_{L^p\trkla{\Gamma}}&\leq C h^2\norm{\nablaG \tilde{f}\h}_{L^{pq}\trkla{\Gamma}}\norm{\nablaG \tilde{g}\h}_{L^{pq^*}\trkla{\Gamma}}\,,\\
\norm{\trkla{1-\IhGop}\tgkla{\tilde{f}\h \tilde{g}\h}}_{W^{1,p}\trkla{\Gamma}}&\leq C h\norm{\nablaG \tilde{f}\h}_{L^{pq}\trkla{\Gamma}}\norm{\nablaG \tilde{g}\h}_{L^{pq^*}\trkla{\Gamma}}
\end{align}
hold true for all $f\h,g\h\in\UhO$ and $\tilde{f}\h,\tilde{g}\h\in\UhG$.
\end{subequations}
\end{lemma}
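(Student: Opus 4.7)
The plan is to prove the bulk estimates element-by-element and then sum over the partition, with the boundary versions following from the identical argument applied to $\ThG$. Fix $K\in\Th$ and choose any vertex $\bs{x}_K$ of $K$. Writing $\tilde{f}\h:=f\h-f\h\trkla{\bs{x}_K}$ and $\tilde{g}\h:=g\h-g\h\trkla{\bs{x}_K}$, both restrictions are affine on $K$ and vanish at $\bs{x}_K$, hence
\begin{align*}
\norm{\tilde{f}\h}_{L^\infty\trkla{K}}\leq C h\,\abs{\restr{\nabla f\h}{K}}\,,\qquad
\norm{\tilde{g}\h}_{L^\infty\trkla{K}}\leq C h\,\abs{\restr{\nabla g\h}{K}}\,.
\end{align*}
Since $\Ihop$ reproduces affine functions on $K$ and the cross terms $f\h\trkla{\bs{x}_K}\tilde{g}\h$ and $\tilde{f}\h g\h\trkla{\bs{x}_K}$ are affine, one obtains the key reduction $\trkla{1-\Ihop}\gkla{f\h g\h}=\trkla{1-\Ihop}\gkla{\tilde{f}\h\tilde{g}\h}$ on $K$.

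The next step is pointwise estimates on $K$. The product $\tilde{f}\h\tilde{g}\h$ is a quadratic polynomial satisfying
\begin{align*}
\abs{\tilde{f}\h\tilde{g}\h\trkla{\bs{x}}}\leq C h^2\abs{\restr{\nabla f\h}{K}}\abs{\restr{\nabla g\h}{K}}\qquad\text{for all }\bs{x}\in K\,,
\end{align*}
and the nodal interpolant $\Ih{\tilde{f}\h\tilde{g}\h}$, being affine and agreeing with $\tilde{f}\h\tilde{g}\h$ at every vertex of $K$, inherits the same $L^\infty$ bound. For the $W^{1,p}$ version I differentiate directly to get $\abs{\nabla\trkla{\tilde{f}\h\tilde{g}\h}}\leq C h\,\abs{\restr{\nabla f\h}{K}}\abs{\restr{\nabla g\h}{K}}$, while a standard inverse estimate on the affine function $\Ih{\tilde{f}\h\tilde{g}\h}$ yields $\abs{\nabla\Ih{\tilde{f}\h\tilde{g}\h}}\leq C h^{-1}\norm{\Ih{\tilde{f}\h\tilde{g}\h}}_{L^\infty\trkla{K}}\leq C h\,\abs{\restr{\nabla f\h}{K}}\abs{\restr{\nabla g\h}{K}}$.

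Raising these pointwise bounds to the $p$-th power, integrating over $K$, summing over $K\in\Th$, and applying the discrete Hölder inequality with conjugate exponents $q$ and $q^*$ yields
\begin{align*}
\sum_{K\in\Th}\abs{K}\,\abs{\restr{\nabla f\h}{K}}^p\,\abs{\restr{\nabla g\h}{K}}^p\leq \norm{\nabla f\h}_{L^{pq}\trkla{\Omega}}^p\,\norm{\nabla g\h}_{L^{pq^*}\trkla{\Omega}}^p\,,
\end{align*}
which, multiplied by $C h^{2p}$ or $C h^{p}$ respectively and followed by taking $p$-th roots, gives the two bulk estimates. The identical argument on $\ThG$ with $\nabla$ replaced by $\nablaG$ establishes the boundary versions. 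The only mildly delicate point I anticipate is the inverse estimate controlling $\abs{\nabla\Ih{\tilde{f}\h\tilde{g}\h}}$; it reduces via the affine map to a reference simplex to a finite-dimensional norm equivalence, and the quasi-uniformity assumptions \ref{item:S1} and \ref{item:S2} ensure that the resulting constant is uniform in $K$.
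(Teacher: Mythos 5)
Your proof is correct and complete. Note that the paper itself does not prove Lemma \ref{lem:Ih} at all: it simply cites Lemma 2.1 of \cite{Metzger2020}, so there is no in-paper argument to compare against line by line. Your route — subtracting the nodal values $f\h\trkla{\bs{x}_K}$, $g\h\trkla{\bs{x}_K}$ on each element so that $\trkla{1-\Ihop}\tgkla{f\h g\h}=\trkla{1-\Ihop}\tgkla{\tilde{f}\h\tilde{g}\h}$ on $K$, bounding both the quadratic $\tilde{f}\h\tilde{g}\h$ and its affine interpolant in $L^\infty\trkla{K}$ by $Ch^2\tabs{\nabla f\h}\tabs{\nabla g\h}$, and then summing with the discrete H\"older inequality — is an elementary and fully self-contained alternative to the more standard derivation, which applies the local interpolation error bound $\tnorm{\trkla{1-\Ihop}v}_{L^p\trkla{K}}\leq Ch^2\tabs{v}_{W^{2,p}\trkla{K}}$ (and its $W^{1,p}$ analogue) to $v=f\h g\h$ and uses that $D^2\trkla{f\h g\h}=\nabla f\h\otimes\nabla g\h+\nabla g\h\otimes\nabla f\h$ is constant on each simplex before invoking H\"older. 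Your version buys independence from the Bramble--Hilbert machinery at the cost of a hands-on inverse estimate for the affine interpolant, which you correctly tie to quasi-uniformity. Two cosmetic remarks: your symbols $\tilde{f}\h,\tilde{g}\h$ collide with the notation used in the lemma statement for the boundary functions, and in the $W^{1,p}$ bound the zeroth-order contribution carries a factor $h^2$ rather than $h$, which is absorbed only because $h$ is bounded — worth one explicit sentence. Neither affects the validity of the argument.
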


Concerning the potentials $F$ and $G$, we will assume that
\begin{itemize}
\labitem{\textbf{(P)}}{item:potential} $F,\,G\in C^{1}\trkla{\mathds{R}}$ satisfy the following properties:
\begin{itemize}
\item $F$ and $G$ are bounded from below such that
\begin{align*}
E\h^\Omega\trkla{\zeta\h}:=\iOmega\Ih{F\trkla{\zeta\h}}\dx\geq\gamma>0&&\text{and}&&E\h^\Gamma\trkla{\tilde{\zeta}\h}:=\iGamma\IhG{G\trkla{\tilde{\zeta}\h}}\dG\geq\gamma>0
\end{align*}
for all $\zeta\h\in\UhO$ and $\tilde{\zeta}\h\in\UhG$ uniformly in $h$.
\item $F$, $G$, and their first derivatives satisfy the growth estimates
\begin{align*}
\tabs{F\trkla{\zeta}}&\leq C\rkla{1+\tabs{\zeta}^4}\,,&\tabs{G\trkla{\zeta}}&\leq C\trkla{1+\tabs{\zeta}^4}\,,\\
\tabs{F^\prime\trkla{\zeta}}&\leq C\rkla{1+\tabs{\zeta}^3}\,,&\tabs{G^\prime\trkla{\zeta}}&\leq C\trkla{1+\tabs{\zeta}^3}\,.
\end{align*}
\item $F$ and $G$ can be decomposed into $F=F_1+F_2$ and $G=G_1+G_2$ with $F_1,G_1\in C^{1,1}\trkla{\mathds{R}}$ and $F_2,G_2\in C^2\trkla{\mathds{R}}$ satisfying
\begin{align*}
\abs{F^{\prime\prime}_2\trkla{\zeta}}\leq C\trkla{1+\tabs{\zeta}^2}&&\text{and}&&\abs{G^{\prime\prime}_2\trkla{\tilde{\zeta}}}\leq C\trkla{1+\tabs{\tilde{\zeta}}^2}\,.
\end{align*}
\end{itemize}
\end{itemize}
These assumptions are satisfied, e.g., by the shifted polynomial double-well potential~ $\widetilde{W}_{\operatorname{pol}}\trkla{\phi}:=\tfrac14\trkla{1-\phi^2}^2+c_0$ with any $c_0>0$, but not by the singular potentials $W_{\log}$ and $W_{\operatorname{obst}}$.\\
In order to establish uniform improved regularity results in Section \ref{sec:regularity}, we require an additional assumption on the discretization parameters $h$ and $\tau$.
In particular, we will need that
\begin{itemize}
\labitem{\textbf{(C)}}{item:htau1} $\displaystyle\frac{h^4}{\tau}\searrow0$ for $\trkla{h,\tau}\searrow\trkla{0,0}$.
\end{itemize}
Concerning the initial data, we will assume that
\begin{itemize}
\labitem{\textbf{(I)}}{item:I} $\phi_0\in H^2\trkla{\Omega}$. The discrete initial data $\phi^0\hx\in\UhO$, $r\hx^0\in \mathds{R}$, and $s\hx^0\in\mathds{R}$ are then obtained via $\phi\hx^0:=\Ih{\phi_0}$, $r\hx^0:=\sqrt{E\h^\Omega\trkla{\phi\hx^0}}$, and $s\hx^0:=\sqrt{E\h^\Gamma\trkla{\phi\hx^0}}$.
\end{itemize}
Immediate consequences of \ref{item:I} are
\begin{subequations}
\begin{align}\label{eq:intialbounds}
\norm{\phi\hx^0}_{W^{1,6}\trkla{\Omega}} +\norm{\phi\hx^0\big\vert_{\Gamma}}_{W^{1,4}\trkla{\Gamma}} + \tabs{r\hx^0}^2+\tabs{s\hx^0}^2\leq C\trkla{\phi_0}
\end{align}
\begin{align}
\text{and}&&\norm{\phi\hx^0-\phi_0}_{H^1\trkla{\Omega}} +\norm{\phi\hx^0\big\vert_\Gamma-\phi_0\big\vert_\Gamma}_{L^4\trkla{\Gamma}}\leq C\trkla{\phi_0}h\,.
\end{align}
\end{subequations}
Denoting the backward difference quotient in time by $\dtau$, we are now in the position to introduce our discrete scheme.
To simplify the notation, we set w.l.o.g.~$\sigma=\mobO=\mobG=\varepsilon=\delta=1$.
In order to obtain a scheme that is not only applicable to $\xi\in\trkla{0,\infty}$, but also covers the limit cases $\xi=0$ and $\xi=\infty$, we follow the lines of \cite{KnopfLamLiuMetzger2021} and replace \eqref{eq:formal:1} by $\tekla{\eqref{eq:formal:1}+\eqref{eq:formal:2}}$ and multiply \eqref{eq:formal:2} by $\tfrac1{1+\xi}$.
The resulting finite element scheme reads:\\

For $\xi\in\tekla{0,\infty}$ and $\trkla{\phi\hx\no,r\hx\no,s\hx\no}\in\UhO\times\mathds{R}\times\mathds{R}$ given, find 
\begin{align*}
\trkla{\phi\hx\nn,\mu\hx\nn,\theta\hx\nn,r\hx\nn,s\hx\nn}\in\UhO\times\UhO\times\UhG\times\mathds{R}\times\mathds{R}
\end{align*}
such that
\begin{subequations}\label{eq:modeldisc}
\begin{multline}\label{eq:modeldisc:phase1}
\iOmega\Ih{\dtau \phi\hx\nn \psi\h}\dx +\iOmega\nabla\mu\hx\nn\cdot\nabla\psi\h\dx +\beta^{-1}\iGamma\IhG{\dtau \phi\hx\nn\psi\h}\dG\\
+\beta^{-1}\iGamma\nablaG\theta\hx\nn\cdot\nablaG\psi\h\dG=0\,,
\end{multline}
\begin{multline}\label{eq:modeldisc:phase2}
\tfrac1{1+\xi}\iGamma\IhG{\dtau\phi\hx\nn\eta\h}\dG+\tfrac1{1+\xi}\iGamma\nablaG\theta\hx\nn\cdot\nablaG\eta\h\dG\\
+\tfrac\xi{1+\xi}\beta\iGamma\IhG{\trkla{\beta\theta\hx\nn-\mu\hx\nn}\eta\h}\dG=0\,,
\end{multline}
\begin{multline}\label{eq:modeldisc:pot}
\iOmega\Ih{\mu\hx\nn\hat{\psi}\h}\dx+\iGamma\IhG{\theta\hx\nn\hat{\psi}\h}\dG=\iOmega\nabla\phi\hx\nn\cdot\nabla\hat{\psi}\h\dx+\iGamma\nablaG\phi\hx\nn\cdot\nablaG\hat{\psi}\h\dG\\
+ \frac{r\hx\nn}{\sqrt{E\h^\Omega\trkla{\phi\hx\no}}}\iOmega \Ih{F^\prime\trkla{\phi\hx\no}\hat{\psi}\h}\dx + \frac{s\hx\nn}{\sqrt{E\h^\Gamma\trkla{\phi\hx\no}}}\iGamma\IhG{G^\prime\trkla{\phi\hx\no}\hat{\psi}\h}\dG\,,
\end{multline}
\begin{align}\label{eq:modeldisc:auxr}
r\hx\nn&=r\hx\no +\frac{1}{2\sqrt{E\h^\Omega\trkla{\phi\hx\no}}}\iOmega\Ih{F^\prime\trkla{\phi\hx\no}\trkla{\phi\hx\nn-\phi\hx\no}}\dx\,,\\
s\hx\nn&=s\hx\no +\frac{1}{2\sqrt{E\h^\Gamma\trkla{\phi\hx\no}}}\iGamma\IhG{G^\prime\trkla{\phi\hx\no}\trkla{\phi\hx\nn-\phi\hx\no}}\dG\,\label{eq:modeldisc:auxs}
\end{align}
\end{subequations}
for all $\psi\h,\hat{\psi}\h\in\UhO$ and $\eta\h\in\UhG$.\\
This fully discrete finite element scheme is linear w.r.t.~the unknowns $\trkla{\phi\hx\nn,\mu\hx\nn,\theta\hx\nn, r\hx\nn,s\hx\nn}\in\UhO\times\UhO\times\UhG\times\mathds{R}\times\mathds{R}$, well-defined for all $\xi\in\tekla{0,\infty}$, and preserves the combined mass conservation
\begin{align}\label{eq:conservation}
\iOmega\phi\hx\nn\dx+\beta^{-1}\iGamma\phi\hx\nn\dG=\iOmega\phi\hx^0\dx+\beta^{-1}\iGamma\phi\hx^0\dG\,.
\end{align}
This allows us to make use of the Poincar\'e-type inequality
\begin{align}\label{eq:poincare}
\norm{\trkla{\phi,\psi}}_{\widetilde{\mathcal{L}}^2}\leq C_P\norm{\nabla \phi}_{L^2\trkla{\Omega}}&&\text{for all~}\trkla{\phi,\psi}\in\mathcal{W}_{\beta,0}\,,
\end{align}
which was proven in \cite{GarckeKnopfYayla2022}.
For $\xi=0$, we also obtain the individual mass conservations
\begin{align}\label{eq:individualconservation}
\iOmega\phi\hx\nn\dx=\iOmega\phi\hx^0\dx&&\text{and}&&\iGamma\phi\hx\nn\dG=\iGamma\phi\hx^0\dG\,.
\end{align}
as we will show in the following, the presented scheme is unconditionally energy stable and has a unique solution. 
In particular, discrete solutions satisfy the following energy estimate.
\begin{lemma}\label{lem:energy}
Let the assumptions \ref{item:T}, \ref{item:S1}, \ref{item:S2}, and \ref{item:potential} hold true and let $\trkla{\phi\hx\no,r\hx\no,s\hx\no}$ be given.
Then, for $\xi<\infty$, a solution $\trkla{\phi\hx\nn,\mu\hx\nn,\theta\hx\nn,r\hx\nn,s\hx\nn}$ to \eqref{eq:modeldisc}, if it exists, satisfies
\begin{multline*}
\tfrac12\iOmega\tabs{\nabla\phi\hx\nn}^2\dx+\tfrac12\iGamma\tabs{\nablaG\phi\hx\nn}^2\dG+\tabs{r\hx\nn}^2+\tabs{s\hx\nn}^2 +\tfrac12\iOmega\tabs{\nabla\phi\hx\nn-\nabla\phi\hx\no}^2\dx\\
+\tfrac12\iGamma\tabs{\nablaG\phi\hx\nn-\nablaG\phi\hx\no}^2\dG + \tabs{r\hx\nn-r\hx\no}^2+\tabs{s\hx\nn-s\hx\no}^2\\
+\tau\iOmega\tabs{\nabla\mu\hx\nn}^2\dx+\tau\iGamma\tabs{\nablaG\theta\hx\nn}^2\dG+\xi\tau\iGamma\IhG{\tabs{\beta\theta\hx\nn-\mu\hx\nn}^2}\dG\\
=\tfrac12\iOmega\tabs{\nabla\phi\hx\no}^2\dx+\tfrac12\iGamma\tabs{\nablaG\phi\hx\no}^2\dG +\tabs{r\hx\no}^2+\tabs{s\hx\no}^2\,.
\end{multline*}
In the case $\xi=\infty$, a possible solution satisfies
\begin{multline*}
\tfrac12\iOmega\tabs{\nabla\phi\hx\nn}^2\dx+\tfrac12\iGamma\tabs{\nablaG\phi\hx\nn}^2\dG+\tabs{r\hx\nn}^2+\tabs{s\hx\nn}^2 +\tfrac12\iOmega\tabs{\nabla\phi\hx\nn-\nabla\phi\hx\no}^2\dx\\
+\tfrac12\iGamma\tabs{\nablaG\phi\hx\nn-\nablaG\phi\hx\no}^2\dG+ \tabs{r\hx\nn-r\hx\no}^2+\tabs{s\hx\nn-s\hx\no}^2\\
+\tau\iOmega\tabs{\nabla\mu\hx\nn}^2\dx+\tau\iGamma\tabs{\nablaG\theta\hx\nn}^2\dG\\
=\tfrac12\iOmega\tabs{\nabla\phi\hx\no}^2\dx+\tfrac12\iGamma\tabs{\nablaG\phi\hx\no}^2\dG +\tabs{r\hx\no}^2+\tabs{s\hx\no}^2\,,
\end{multline*}
together with $\beta\theta\hx\nn\equiv\mu\hx\nn$ on $\Gamma$.
\end{lemma}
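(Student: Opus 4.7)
The plan is to run the standard SAV energy argument: test the three equations of the scheme with suitable combinations of $\mu\hx\nn$, $\theta\hx\nn$, and $\phi\hx\nn - \phi\hx\no$, use the algebraic identity $a\trkla{a-b} = \tfrac12\trkla{a^2 - b^2} + \tfrac12\trkla{a-b}^2$, and exploit the SAV update rules \eqref{eq:modeldisc:auxr}--\eqref{eq:modeldisc:auxs} to convert the nonlinear $F^\prime$- and $G^\prime$-contributions into the scalar quantities $\tabs{r\hx\nn}^2 - \tabs{r\hx\no}^2 + \tabs{r\hx\nn - r\hx\no}^2$ and its $s$-analogue.

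Concretely, I first test \eqref{eq:modeldisc:pot} with $\hat{\psi}\h = \phi\hx\nn - \phi\hx\no$: the two gradient terms expand via the identity into precisely the target gradient differences, and since by \eqref{eq:modeldisc:auxr}--\eqref{eq:modeldisc:auxs} the integrals $\iOmega\Ih{F^\prime\trkla{\phi\hx\no}\trkla{\phi\hx\nn - \phi\hx\no}}\dx$ and $\iGamma\IhG{G^\prime\trkla{\phi\hx\no}\trkla{\phi\hx\nn - \phi\hx\no}}\dG$ equal $2\sqrt{E\h^\Omega\trkla{\phi\hx\no}}\trkla{r\hx\nn - r\hx\no}$ respectively $2\sqrt{E\h^\Gamma\trkla{\phi\hx\no}}\trkla{s\hx\nn - s\hx\no}$, the SAV coefficients on the right-hand side of \eqref{eq:modeldisc:pot} collapse these contributions into $2r\hx\nn\trkla{r\hx\nn - r\hx\no} + 2s\hx\nn\trkla{s\hx\nn - s\hx\no}$, which the same identity rewrites as the required auxiliary-variable terms. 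What still needs to be absorbed is the residual left-hand side $\iOmega\Ih{\mu\hx\nn\trkla{\phi\hx\nn - \phi\hx\no}}\dx + \iGamma\IhG{\theta\hx\nn\trkla{\phi\hx\nn - \phi\hx\no}}\dG$.

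For $\xi < \infty$ I then test \eqref{eq:modeldisc:phase1} with $\psi\h = \tau\mu\hx\nn$ and \eqref{eq:modeldisc:phase2} with $\eta\h = \tau\trkla{1+\xi}\trkla{\theta\hx\nn - \beta^{-1}\mu\hx\nn}$. Upon summing, the off-diagonal boundary contributions $\beta^{-1}\iGamma\IhG{\trkla{\phi\hx\nn - \phi\hx\no}\mu\hx\nn}\dG$ and $\tau\beta^{-1}\iGamma\nablaG\theta\hx\nn\cdot\nablaG\mu\hx\nn\dG$ generated by the $\mu\hx\nn$-test of \eqref{eq:modeldisc:phase1} cancel against the matching pieces produced by the $-\beta^{-1}\mu\hx\nn$ component of the test function in \eqref{eq:modeldisc:phase2}, while the reaction term factorises via $\trkla{\beta\theta\hx\nn - \mu\hx\nn}\trkla{\theta\hx\nn - \beta^{-1}\mu\hx\nn} = \beta^{-1}\trkla{\beta\theta\hx\nn - \mu\hx\nn}^2$ into the penalty square $\tau\xi\iGamma\IhG{\tabs{\beta\theta\hx\nn - \mu\hx\nn}^2}\dG$. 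Combining this sum with the identity obtained from the first step closes the argument for finite $\xi$. For $\xi = \infty$ the coefficients $\tfrac{1}{1+\xi}$ and $\tfrac{\xi}{1+\xi}$ in \eqref{eq:modeldisc:phase2} reduce to $0$ and $1$, so \eqref{eq:modeldisc:phase2} enforces $\iGamma\IhG{\trkla{\beta\theta\hx\nn - \mu\hx\nn}\eta\h}\dG = 0$ for all $\eta\h\in\UhG$, i.e.~$\beta\theta\hx\nn\equiv\mu\hx\nn$ on $\Gamma$; testing \eqref{eq:modeldisc:phase1} with $\tau\mu\hx\nn$ alone and using this pointwise identification to rewrite $\beta^{-1}\mu\hx\nn$ as $\theta\hx\nn$ on the boundary immediately yields the second claim without any penalty contribution.

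The only genuinely delicate ingredient is the engineered test function $\eta\h = \tau\trkla{1+\xi}\trkla{\theta\hx\nn - \beta^{-1}\mu\hx\nn}$ in \eqref{eq:modeldisc:phase2}: it is designed precisely to annihilate the two boundary by-products introduced by testing \eqref{eq:modeldisc:phase1} with $\mu\hx\nn$ while simultaneously squaring the flux jump $\beta\theta\hx\nn - \mu\hx\nn$. Everything else is bookkeeping around the identity $a\trkla{a-b} = \tfrac12\trkla{a^2 - b^2} + \tfrac12\trkla{a-b}^2$ and the SAV update relations.
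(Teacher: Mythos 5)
Your proposal is correct and follows essentially the same route as the paper: the identical test functions $\psi\h=\tau\mu\hx\nn$, $\eta\h=\trkla{1+\xi}\tau\trkla{\theta\hx\nn-\beta^{-1}\mu\hx\nn}$, and $\hat{\psi}\h=\phi\hx\nn-\phi\hx\no$, the identity $a\trkla{a-b}=\tfrac12\trkla{a^2-b^2}+\tfrac12\trkla{a-b}^2$ combined with the SAV update rules, and the same observation that for $\xi=\infty$ equation \eqref{eq:modeldisc:phase2} directly forces $\beta\theta\hx\nn\equiv\mu\hx\nn$ on $\Gamma$. The only difference is the order in which the equations are tested, which is immaterial.
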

\begin{remark}\label{rem:energy}
The result of Lemma \ref{lem:energy} indicates the drawback of the SAV approach.
Instead of controlling discrete versions of the energies defined in \eqref{eq:EOmega} and \eqref{eq:EGamma}, Lemma \ref{lem:energy} only allows us to control a modified energy functional, where $\iOmega\Ih{F\trkla{\phi\hx\nn}}\dx$ and $\iGamma\IhG{G\trkla{\phi\hx\nn}}\dG$ are replaced by $\tabs{r\hx\nn}^2$ and $\tabs{s\hx\nn}^2$.
\end{remark}
\begin{proof}[Proof of Lemma \ref{lem:energy}]
In the case $\xi<\infty$, we test \eqref{eq:modeldisc:phase1} by $\psi\h=\tau\mu\hx\nn$ and \eqref{eq:modeldisc:phase2} by $\eta\h=\trkla{1+\xi}\tau\trkla{\theta\hx\nn-\beta^{-1}\mu\hx\nn}$ to obtain
\begin{align}
\begin{split}\label{eq:energy:test1}
0=&\,\iOmega\Ih{\trkla{\phi\hx\nn-\phi\hx\no}\mu\hx\nn}\dx+\iGamma\IhG{\trkla{\phi\hx\nn-\phi\hx\no}\theta\hx\nn}\dG \\
&+\tau\iOmega\tabs{\nabla\mu\hx\nn}^2\dx+\tau\iGamma\tabs{\nablaG\theta\hx\nn}^2\dG +\xi\tau\iGamma\IhG{\tabs{\beta\theta\hx\nn-\mu\hx\nn}^2}\dG\,.
\end{split}
\end{align}
In the case $\xi=\infty$, \eqref{eq:modeldisc:phase2} immediately provides $\beta\theta\hx\nn\equiv\mu\hx\nn$ on $\Gamma$.
Therefore, it suffices to test \eqref{eq:modeldisc:phase1} by $\psi\h=\tau\mu\hx\nn$ to obtain
\begin{align}
\begin{split}\label{eq:energy:test2}
0=&\,\iOmega\Ih{\trkla{\phi\hx\nn-\phi\hx\no}\mu\hx\nn}\dx+\iGamma\IhG{\trkla{\phi\hx\nn-\phi\hx\no}\theta\hx\nn}\dG \\
&+\tau\iOmega\tabs{\nabla\mu\hx\nn}^2\dx+\tau\iGamma\tabs{\nablaG\theta\hx\nn}^2\dG\,.
\end{split}
\end{align}
Choosing $\hat{\psi}\h=\trkla{\phi\hx\nn-\phi\hx\no}$ in \eqref{eq:modeldisc:pot} and multiplying \eqref{eq:modeldisc:auxr} by $r\hx\nn$ and \eqref{eq:modeldisc:auxs} by $s\hx\nn$, we obtain
\begin{align}\label{eq:energy:test3}
\begin{split}
\iOmega&\Ih{\trkla{\phi\hx\nn-\phi\hx\no}\mu\hx\nn}\dx+\iGamma\IhG{\trkla{\phi\hx\nn-\phi\hx\no}\theta\hx\nn}\dG\\
&=\tfrac12\iOmega\tabs{\nabla\phi\hx\nn}^2\dx+\tfrac12\iOmega\tabs{\nabla\phi\hx\nn-\nabla\phi\hx\no}^2\dx-\tfrac12\iOmega\tabs{\nabla\phi\hx\no}^2\dx\\
&\quad+\tfrac12\iGamma\tabs{\nablaG\phi\hx\nn}^2\dG+\tfrac12\iGamma\tabs{\nablaG\phi\hx\nn-\nablaG\phi\hx\no}^2\dG-\tfrac12\iGamma\tabs{\nablaG\phi\hx\no}^2\dG\\
&\quad +\tabs{r\hx\nn}^2+\tabs{r\hx\nn-r\hx\no}^2-\tabs{r\hx\no}^2 +\tabs{s\hx\nn}^2+\tabs{s\hx\nn-s\hx\no}^2-\tabs{s\hx\no}^2\,.
\end{split}
\end{align}
Combining \eqref{eq:energy:test3} with \eqref{eq:energy:test1} or \eqref{eq:energy:test2}, respectively, provides the result.
\end{proof}

We shall now use these a priori estimates to establish the existence of a unique discrete solution.
Thereby, we will follow the approach used in \cite{Metzger2021a} and \cite{KnopfLamLiuMetzger2021}.
\begin{lemma}\label{lem:exitence}
Let the assumptions \ref{item:T}, \ref{item:S1}, \ref{item:S2}, and \ref{item:potential} hold true.
Then for given $\trkla{\phi\hx\no,r\hx\no,s\hx\no}\in\UhO\times\mathds{R}\times\mathds{R}$ and a given arbitrary time increment $\tau>0$, there exists a unique quintuple 
\begin{align*}
\trkla{\phi\hx\nn,\mu\hx\nn,\theta\hx\nn,r\hx\nn,s\hx\nn}\in \UhO\times\UhO\times\UhG\times\mathds{R}\times\mathds{R}
\end{align*}
which is a solution to \eqref{eq:modeldisc}.
\end{lemma}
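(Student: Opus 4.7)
The plan is to exploit that \eqref{eq:modeldisc} is a square linear system on a finite-dimensional space. Equations \eqref{eq:modeldisc:auxr} and \eqref{eq:modeldisc:auxs} express $r\hx\nn$ and $s\hx\nn$ as explicit affine functions of $\phi\hx\nn$---well defined because $E\h^\Omega\trkla{\phi\hx\no}, E\h^\Gamma\trkla{\phi\hx\no}\geq\gamma>0$ by \ref{item:potential}---so after substitution the remaining unknowns $\trkla{\phi\hx\nn, \mu\hx\nn, \theta\hx\nn}\in\UhO\times\UhO\times\UhG$ satisfy a linear system whose test space has the same (finite) dimension. It therefore suffices to verify uniqueness, namely that the homogeneous problem obtained by setting $\phi\hx\no = r\hx\no = s\hx\no = 0$ on the right-hand side of \eqref{eq:modeldisc} (while the coefficient functions $F^\prime\trkla{\phi\hx\no}$, $\sqrt{E\h^\Omega\trkla{\phi\hx\no}}$, etc., remain those of the given reference step) has only the trivial solution.

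Given a homogeneous solution, apply the test function strategy from the proof of Lemma \ref{lem:energy}: test \eqref{eq:modeldisc:phase1} with $\psi\h=\tau\mu\hx\nn$, test \eqref{eq:modeldisc:phase2} with $\eta\h=\trkla{1+\xi}\tau\trkla{\theta\hx\nn-\beta^{-1}\mu\hx\nn}$, test \eqref{eq:modeldisc:pot} with $\hat{\psi}\h=\phi\hx\nn$, and multiply \eqref{eq:modeldisc:auxr} and \eqref{eq:modeldisc:auxs} by $r\hx\nn$ and $s\hx\nn$ respectively. The cancellations carried out in the proof of Lemma \ref{lem:energy} then collapse to
\begin{equation*}
\iOmega\tabs{\nabla\phi\hx\nn}^2\dx + \iGamma\tabs{\nablaG\phi\hx\nn}^2\dG + 2\tabs{r\hx\nn}^2 + 2\tabs{s\hx\nn}^2 + \tau\iOmega\tabs{\nabla\mu\hx\nn}^2\dx + \tau\iGamma\tabs{\nablaG\theta\hx\nn}^2\dG + \xi\tau\iGamma\IhG{\tabs{\beta\theta\hx\nn-\mu\hx\nn}^2}\dG = 0\,.
\end{equation*}
Each summand is non-negative, forcing $r\hx\nn = s\hx\nn = 0$ and reducing $\phi\hx\nn,\mu\hx\nn,\theta\hx\nn$ to constants on the connected sets $\Omega$ and $\Gamma$. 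Combined mass conservation \eqref{eq:conservation} applied to the difference of the two original solutions places $\trkla{\phi\hx\nn, \phi\hx\nn\big\vert_\Gamma}$ in $\mathcal{W}_{\beta,0}$, so the Poincaré-type inequality \eqref{eq:poincare} yields $\phi\hx\nn\equiv 0$.

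The final---and most delicate---step is to eliminate the two residual constants $\mu\hx\nn$ and $\theta\hx\nn$, which the energy identity cannot distinguish on its own. Substituting $\phi\hx\nn = r\hx\nn = s\hx\nn = 0$ into \eqref{eq:modeldisc:pot} reduces that equation to
\begin{equation*}
\iOmega\Ih{\mu\hx\nn\hat{\psi}\h}\dx + \iGamma\IhG{\theta\hx\nn\hat{\psi}\h}\dG = 0 \qquad \text{for all~} \hat{\psi}\h\in\UhO.
\end{equation*}
Choosing $\hat{\psi}\h = \chi_{h,k}$ for an interior vertex $\bs{x}_k\notin\Gamma$---so that $\chi_{h,k}\big\vert_\Gamma\equiv 0$ and $\iOmega\chi_{h,k}\dx>0$---and using that $\mu\hx\nn$ is a constant forces $\mu\hx\nn=0$; a subsequent test with a boundary-vertex nodal function then yields $\theta\hx\nn=0$. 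The case $\xi=\infty$ is handled analogously, using the pointwise identity $\beta\theta\hx\nn\equiv\mu\hx\nn$ in place of \eqref{eq:modeldisc:phase2}.
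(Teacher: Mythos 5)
Your proof is correct, and on the existence part it takes a genuinely different route from the paper. The paper first invokes the construction from Section 5 of \cite{KnopfLamLiuMetzger2021} to express $\mu\hx\nn$ and $\theta\hx\nn$ as functions of $\phi\hx\nn$, reduces \eqref{eq:modeldisc} to the single equation \eqref{eq:phireduced}, and proves existence by contradiction via Brouwer's fixed point theorem (the map $\mathcal{H}$ onto $\partial B_R$); uniqueness is then obtained by the same energy-type testing you use, with the residual additive constants in the potentials pinned down by appealing to the unique determination of $\mu\trkla{\phi\hx\nn}$ and $\theta\trkla{\phi\hx\nn}$ from that external construction. You instead observe that \eqref{eq:modeldisc} is a square linear system on a finite-dimensional space, so injectivity of the homogeneous problem already yields surjectivity; this replaces the topological argument --- which is really only needed in the nonlinear convex-concave-splitting setting this proof is modelled on --- by linear algebra, and your direct elimination of the constants from \eqref{eq:modeldisc:pot} makes the argument self-contained. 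Your homogeneous energy identity is the correct specialization of the computation in Lemma \ref{lem:energy} (the coefficients $2\tabs{r\hx\nn}^2$ and $2\tabs{s\hx\nn}^2$ arise from $\tabs{r\hx\nn}^2+\tabs{r\hx\nn-r\hx\no}^2-\tabs{r\hx\no}^2$ with $r\hx\no=0$), and the passage from ``$\phi\hx\nn$ constant'' to $\phi\hx\nn\equiv0$ via \eqref{eq:conservation} and \eqref{eq:poincare} is exactly as in the paper.

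One small caveat on your final step: testing with $\chi_{h,k}$ for an interior vertex presupposes that $\Th$ possesses one, which fails only for trivially coarse meshes (e.g.\ a partition consisting of a single simplex) and is harmless in the asymptotic regime of interest. Note moreover that for $\xi\in(0,\infty]$ your energy identity already forces $\beta\theta\hx\nn\equiv\mu\hx\nn$ on $\Gamma$, so there the two constants can be eliminated by testing \eqref{eq:modeldisc:pot} with $\hat{\psi}\h\equiv1$ alone; the interior-vertex test function is genuinely needed only in the case $\xi=0$. It would be worth making this explicit.
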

\begin{proof}
Recalling \eqref{eq:conservation}, we note that the combined mean value of a possible solution $\phi\hx\nn$ is prescribed by $\phi\hx\no$.
Without loss of generality, we assume that $\iOmega\phi\hx\no\dx-\beta^{-1}\iGamma\phi\hx\no\dG=0$, which allows to apply \eqref{eq:poincare}.
As for fixed $\phi\hx\no$, $r\hx\no$, and $s\hx\no$ the quantities $r\hx\nn$ and $s\hx\nn$ are uniquely determined by $\phi\hx\nn$ via \eqref{eq:modeldisc:auxr} and \eqref{eq:modeldisc:auxs}, we may write \eqref{eq:modeldisc:pot} as
\begin{align}
\iOmega\Ih{\mu\hx\nn\psi\h}\dx+\iGamma\IhG{\theta\hx\nn\psi\h}\dG=\text{RHS}\trkla{\phi\hx\nn}\,.
\end{align}
This allows us to use the results from Section 5 in \cite{KnopfLamLiuMetzger2021} (see also \cite{Metzger2021a}) to derive the existence of potentials $\mu\hx\nn=:\mu\trkla{\phi\hx\nn}$ and $\theta\hx\nn=:\theta\trkla{\phi\hx\nn}$ satisfying \eqref{eq:modeldisc:pot} which are uniquely determined by $\phi\hx\nn$ such that \eqref{eq:modeldisc} equivalent to
\begin{multline}\label{eq:phireduced}
\iOmega\Ih{\trkla{\phi\hx\nn-\phi\hx\no}\psi\h}\dx+\beta^{-1}\iGamma\IhG{\trkla{\phi\hx\nn-\phi\hx\no}\psi\h}\dG\\
+\tau\iOmega\nabla\mu\trkla{\phi\hx\nn}\cdot\nabla\psi\h\dx+\beta^{-1}\tau\iGamma\nablaG\theta\trkla{\phi\hx\nn}\cdot\nablaG\psi\h\dG=0\,.
\end{multline}
In order to establish the existence of a suitable $\phi\hx\nn$ satisfying \eqref{eq:phireduced}, we shall use a fixed point argument based on Brouwer's fixed point theorem.\\

Assuming that \eqref{eq:phireduced} has no solution in the closed set
\begin{align}
B_R:=\gkla{\varphi\h\in\UhO\,: \iOmega\varphi\h\dx+\beta^{-1}\iGamma\varphi\h\dG=0\text{~and~}\norm{\varphi\h}_{L^2\trkla{\Omega}}\leq R}
\end{align}
for any $R>0$, the function $\mathcal{G}\,:\,\UhO\rightarrow\UhO$ defined via
\begin{multline}
\iOmega\Ih{\mathcal{G}\trkla{\phi\h}\psi\h}\dx+\beta^{-1}\iGamma\IhG{\mathcal{G}\trkla{\phi\h}\psi\h}\dG\\
:=\iOmega\Ih{\trkla{\phi\h-\phi\hx\no}\psi\h}\dx+\beta^{-1}\iGamma\IhG{\trkla{\phi\h-\phi\hx\no}\psi\h}\dG\\
+\tau\iOmega\nabla\mu\trkla{\phi\h}\cdot\nabla\psi\h\dx+\tau\beta^{-1}\iGamma\nablaG\theta\trkla{\phi\h}\cdot\nablaG\psi\h\dG
\end{multline}
has no roots in $B_R$.
Then, the function
\begin{align}\label{eq:def:H}
\mathcal{H}\trkla{\phi\h}:=-R\frac{\mathcal{G}\trkla{\phi\h}}{\norm{\mathcal{G}\trkla{\phi\h}}_{L^2\trkla{\Omega}}}
\end{align}
is a continuous mapping from $B_R$ to $\partial B_R\subset B_R$.
By Brouwer's fixed point theorem, we obtain the existence of at least one fixed point $\phi\h^*$ of $\mathcal{H}$.
In the following, we will show that this fixed point satisfies
\begin{align}\label{eq:contradiction}
0<\iOmega\Ih{\phi\h^*\mu\trkla{\phi\h^*}}\dx+\iGamma\IhG{\phi\h^*\theta\trkla{\phi\h^*}}\dG<0
\end{align}
for $R$ sufficiently large.
The arising contradiction shows that our initial assumption has to be wrong and there exists at least one solution to \eqref{eq:modeldisc}.
To prove the first inequality in \eqref{eq:contradiction}, we use that $\mu\trkla{\phi\h^*}$ and $\theta\trkla{\phi\h^*}$ satisfy \eqref{eq:modeldisc:pot} and compute
\begin{align}
\begin{split}
\iOmega&\Ih{\phi\h^*\mu\trkla{\phi\h^*}}\dx+\iGamma\IhG{\phi\h^*\theta\trkla{\phi\h^*}}\dG =\iOmega\tabs{\nabla\phi\h^*}^2\dx+\iGamma\tabs{\nablaG\phi\h^*}^2\dG \\
& +\frac{r\hx\no}{\sqrt{E\h^\Omega\trkla{\phi\hx\no}}}\iOmega\Ih{F^\prime\trkla{\phi\hx\no}\phi\h^*}\dx+\frac{1}{2 E\h^\Omega\trkla{\phi\hx\no}}\abs{\iOmega\Ih{F^\prime\trkla{\phi\hx\no}\phi\h^*}\dx}^2\\
&-\frac{1}{2E\h^\Omega\trkla{\phi\hx\no}}\iOmega\Ih{F^\prime\trkla{\phi\hx\no}\phi\hx\no}\dx\iOmega\Ih{F^\prime\trkla{\phi\hx\no}\phi\h^*}\dx  \\
&+\frac{s\hx\no}{\sqrt{E\h^\Gamma\trkla{\phi\hx\no}}}\iGamma\IhG{G^\prime\trkla{\phi\hx\no}\phi\h^*}\dG+\frac{1}{2 E\h^\Gamma\trkla{\phi\hx\no}}\abs{\iGamma\IhG{G^\prime\trkla{\phi\hx\no}\phi\h^*}\dG}^2\\
&-\frac{1}{2E\h^\Gamma\trkla{\phi\hx\no}}\iGamma\IhG{G^\prime\trkla{\phi\hx\no}\phi\hx\no}\dG\iGamma\IhG{G^\prime\trkla{\phi\hx\no}\phi\h^*}\dG\,.
\end{split}
\end{align}
By applying Young's inequality and \eqref{eq:poincare}, we obtain
\begin{align}
\iOmega\Ih{\phi\h^*\mu\trkla{\phi\h^*}}\dx+\iGamma\IhG{\phi\h^*\theta\trkla{\phi\h^*}}\dG\geq  C_P^{-2}\norm{\trkla{\phi\h^*,\phi\h^*\big\vert_\Gamma}}_{\widetilde{\mathcal{L}}^2}^2 -C\trkla{\phi\hx\no,r\hx\no,s\hx\no}
\end{align}
which is positive for $R$ large enough.\\
The second inequality in \eqref{eq:contradiction} can be established by recalling the computations from the proof of Lemma \ref{lem:energy} and \eqref{eq:poincare} which provide
\begin{align*}
\iOmega\!\Ih{\mathcal{G}\trkla{\phi\h^*}\mu\trkla{\phi\h^*}}\dx\!+\!\!\iGamma\!\IhG{\mathcal{G}\trkla{\phi\h^*}\theta\trkla{\phi\h^*}}\dG \geq \tfrac{C_P^{-2}}2\norm{\trkla{\phi\h^*,\phi\h^*\big\vert_\Gamma}}_{\widetilde{\mathcal{L}}^2}^2 -C\trkla{\phi\hx\no,r\hx\no,s\hx\no}\,.
\end{align*}
Again, we may choose $R$ sufficiently large to obtain a positive right-hand side.
Hence, by \eqref{eq:def:H} we have the second inequality in \eqref{eq:contradiction} and therefore the existence of at least one solution.\\
To obtain the uniqueness, we assume the existence of two solutions denoted by $\trkla{\phi\hx\nn,\mu\hx\nn,\theta\hx\nn,r\hx\nn,s\hx\nn}$ and $\trkla{\tilde{\phi}\hx\nn,\tilde{\mu}\hx\nn,\tilde{\theta}\hx\nn,\tilde{r}\hx\nn,\tilde{s}\hx\nn}$, use the linearity of \eqref{eq:modeldisc} to substract the corresponding equations, and mimic the steps from the proof of Lemma \ref{lem:energy} to obtain
\begin{multline}\label{eq:soldifference}
\norm{\nabla\phi\hx\nn-\nabla\tilde{\phi}\hx\nn}_{L^2\trkla{\Omega}}^2+\norm{\nablaG\phi\hx\nn-\nablaG\tilde{\phi}\hx\nn}_{L^2\trkla{\Gamma}}^2 +\tabs{r\hx\nn-\tilde{r}\hx\nn}^2 +\tabs{s\hx\nn-\tilde{s}\hx\nn}^2\\
+\tau\norm{\nabla\mu\hx\nn-\nabla\tilde{\mu}\hx\nn}_{L^2\trkla{\Omega}}^2 +\tau\norm{\nablaG\theta\hx\nn-\nablaG\tilde{\theta}\hx\nn}_{L^2\trkla{\Gamma}}^2\leq 0\,.
\end{multline}
Recalling \eqref{eq:poincare}, we note $\phi\hx\nn=\tilde{\phi}\hx\nn$, $r\hx\nn=\tilde{r}\hx\nn$, and $s\hx\nn=\tilde{s}\hx\nn$.
Concerning the chemical potentials, \eqref{eq:soldifference} suggests uniqueness up to an additive constant.
However, recalling that $\mu\hx\nn$ and $\theta\hx\nn$ are uniquely determined by $\phi\hx\nn$, we are able to resolve this ambiguity.
\end{proof}
\section{Compactness in space and time}\label{sec:regularity}
In this section, we derive improved regularity results which will enable us to pass to the limit $\trkla{h,\tau,\xi}\rightarrow\trkla{0,0,\xi_0}$ for $\xi_0\in\tekla{0,\infty}$ in Section \ref{sec:limit} by identifying weakly and strongly converging subsequences of discrete solutions.
Summing over all time steps, a direct consequence of Lemma \ref{lem:energy} is the following result:
\begin{lemma}\label{lem:regularity}
Let the assumptions \ref{item:T}, \ref{item:S1}, \ref{item:S2}, \ref{item:potential}, \ref{item:htau1}, and \ref{item:I} hold true and let $h>0$ be small enough. Then the solution $\trkla{\phi\hx\nn,\,\mu\hx\nn,\theta\hx\nn,r\hx\nn,s\hx\nn}_{n=1,\ldots,N}$ to \eqref{eq:modeldisc} satisfies
\begin{subequations}\label{eq:regularity:energy}
\begin{multline}\label{eq:regularity:energy:a}
\max_{n=0,\ldots,N}\norm{\phi\hx\nn}_{H^1\trkla{\Omega}}^2+\max_{n=0,\ldots,N}\norm{\phi\hx\nn}_{H^1\trkla{\Gamma}}^2 + \max_{n=0,\ldots,N} \tabs{r\hx\nn}^2 +\max_{n=0,\ldots,N}\tabs{s\hx\nn}^2\\
+\sum_{n=1}^N\norm{\phi\hx\nn-\phi\hx\no}_{H^1\trkla{\Omega}}^2+\sum_{n=1}^N\norm{\phi\hx\nn-\phi\hx\no}_{H^1\trkla{\Gamma}}^2 +\sum_{n=1}^N\tabs{r\hx\nn-r\hx\no}^2 +\sum_{n=1}^N\tabs{s\hx\nn-s\hx\no}^2\\
+\tau\sum_{n=1}^N\norm{\mu\hx\nn}_{H^1\trkla{\Omega}}^2+\tau\sum_{n=1}^N\norm{\theta\hx\nn}_{H^1\trkla{\Gamma}}^2\leq C
\end{multline}
with a constant $C>0$ depending only on $\phi_0$, but not on $h$, $\tau$, or $\xi$.
Additionally, we have
\begin{align}
\tau\sum_{n=1}^N\norm{\beta\theta\hx\nn-\mu\hx\nn}_{L^2\trkla{\Gamma}}^2\leq C\xi^{-1}\,.
\end{align}
\end{subequations}
\end{lemma}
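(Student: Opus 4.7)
The plan is to derive Lemma \ref{lem:regularity} by telescoping the energy identity of Lemma \ref{lem:energy} over $n=1,\ldots,N$ and then upgrading the resulting gradient-type controls to full $H^1$ bounds by means of mass conservation and suitable Poincar\'e arguments. First, I write the identity of Lemma \ref{lem:energy} at each step $n\geq 1$ and sum. The contributions $\tfrac12\|\nabla\phi\hx\no\|^2+\tfrac12\|\nablaG\phi\hx\no\|^2+|r\hx\no|^2+|s\hx\no|^2$ on the right telescope against the same four terms at step $n$ on the left, leaving only the $n=0$ initial energy on the right-hand side. By \ref{item:I} and \eqref{eq:intialbounds}, this initial energy is bounded by $C(\phi_0)$. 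Reading off the nonnegative contributions on the left simultaneously yields the bounds on $\max_n\|\nabla\phi\hx\nn\|_{L^2(\Omega)}$, $\max_n\|\nablaG\phi\hx\nn\|_{L^2(\Gamma)}$, $\max_n|r\hx\nn|$, $\max_n|s\hx\nn|$, the corresponding sums of norms of increments, and the $\tau$-weighted sums of $\|\nabla\mu\hx\nn\|_{L^2(\Omega)}^2$ and $\|\nablaG\theta\hx\nn\|_{L^2(\Gamma)}^2$. The $\xi^{-1}$ estimate on $\|\beta\theta\hx\nn-\mu\hx\nn\|_{L^2(\Gamma)}^2$ drops out by dividing the $\xi\tau$-weighted contribution on the left by $\xi$.

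The gradient bounds on $\phi\hx\nn$ then need to be upgraded to full $H^1$ bounds. The combined mass conservation \eqref{eq:conservation} fixes $m:=\iOmega\phi\hx^0\dx+\beta^{-1}\iGamma\phi\hx^0\dG$ independently of $n$, so, setting the constant $c:=m/(|\Omega|+\beta^{-1}|\Gamma|)$, the shifted pair $(\phi\hx\nn-c,\phi\hx\nn\big\vert_\Gamma-c)$ lies in $\mathcal{W}_{\beta,0}$; applying \eqref{eq:poincare} gives $L^2$ control on $\phi\hx\nn$ in $\Omega$ and on $\Gamma$, with $c$ uniformly bounded by $C(\phi_0)$ via \eqref{eq:intialbounds}. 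The same inequality applied to the increment $\phi\hx\nn-\phi\hx\no$, which has zero combined mean by conservation, upgrades the increment sums to the full $H^1$-sums in \eqref{eq:regularity:energy:a}.

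The more delicate step is to upgrade the gradient controls on $\mu\hx\nn$ and $\theta\hx\nn$ to $H^1$ controls, since the energy identity supplies only seminorm information. For this I test \eqref{eq:modeldisc:pot} with $\hat\psi\h\equiv 1$ to obtain
\begin{equation*}
\iOmega\Ih{\mu\hx\nn}\dx+\iGamma\IhG{\theta\hx\nn}\dG=\tfrac{r\hx\nn}{\sqrt{E\h^\Omega(\phi\hx\no)}}\iOmega\Ih{F^\prime(\phi\hx\no)}\dx+\tfrac{s\hx\nn}{\sqrt{E\h^\Gamma(\phi\hx\no)}}\iGamma\IhG{G^\prime(\phi\hx\no)}\dG.
\end{equation*}
Using the lower bound $E\h^\Omega,E\h^\Gamma\geq\gamma>0$ from \ref{item:potential}, the $\ell^\infty$-in-$n$ bounds on $r\hx\nn,s\hx\nn$ just obtained, the cubic growth of $F^\prime,G^\prime$, the uniform $H^1$-bound on $\phi\hx\no$ combined with Sobolev embedding, and the norm equivalences \eqref{eq:normequivalence}, the right-hand side is uniformly bounded. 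This controls the combined mean of $(\mu\hx\nn,\theta\hx\nn)$, and together with the gradient bounds a Poincar\'e inequality on $\mathcal{H}^1$ for pairs with vanishing combined mean then yields the full $H^1$-bounds on $\mu\hx\nn$ and on $\theta\hx\nn$; weighting by $\tau$ and summing completes \eqref{eq:regularity:energy:a}.

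The hardest part is the very last step: the test $\hat\psi\h\equiv 1$ supplies only one scalar constraint, namely on the combined mean, while $(\mu\hx\nn,\theta\hx\nn)$ lies in $\mathcal{H}^1$ without any trace-matching condition, so the Poincar\'e inequality \eqref{eq:poincare} (which is formulated on $\mathcal{W}_{\beta,0}\subset\mathcal{V}$) does not apply directly. I would resolve this by invoking, in the style of \cite{GarckeKnopfYayla2022}, the more general Poincar\'e estimate $\|(\varphi,\psi)\|_{\widetilde{\mathcal{L}}^2}\leq C\bigl(\|\nabla\varphi\|_{L^2(\Omega)}+\|\nablaG\psi\|_{L^2(\Gamma)}\bigr)$, valid for $(\varphi,\psi)\in\mathcal{H}^1$ subject only to $\iOmega\varphi\dx+\beta^{-1}\iGamma\psi\dG=0$, which can be established by a standard compactness/contradiction argument and whose constant is independent of $\xi$, preserving uniformity of all estimates in the adsorption parameter.
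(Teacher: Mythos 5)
Your telescoping of the energy identity, the treatment of the increments, the $\xi^{-1}$ bound, and the upgrade of the $\phi\hx\nn$-bounds via \eqref{eq:conservation} and \eqref{eq:poincare} all match the paper's argument. The proof breaks down, however, at exactly the step you flag as the hardest one: the Poincar\'e inequality you propose to invoke, namely $\norm{\trkla{\varphi,\psi}}_{\widetilde{\mathcal{L}}^2}\leq C\trkla{\norm{\nabla\varphi}_{L^2\trkla{\Omega}}+\norm{\nablaG\psi}_{L^2\trkla{\Gamma}}}$ for pairs $\trkla{\varphi,\psi}\in\mathcal{H}^1$ subject only to $\iOmega\varphi\dx+\beta^{-1}\iGamma\psi\dG=0$, is false. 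Without the trace-matching condition (and the pair $\trkla{\mu\hx\nn,\theta\hx\nn}$ satisfies no such condition for finite $\xi$), the kernel of the right-hand side consists of all pairs of constants $\trkla{c_1,c_2}$, a two-dimensional space, and the single scalar constraint on the combined mean only removes one dimension: the pair $\trkla{c,\,-\beta c\tabs{\Omega}/\tabs{\Gamma}}$ with $c\neq0$ satisfies the mean-zero condition, has vanishing gradients, but nonzero $\widetilde{\mathcal{L}}^2$-norm. No compactness argument can rescue an inequality with a nontrivial kernel. Testing \eqref{eq:modeldisc:phase2} with $\eta\h\equiv1$ would supply a second constraint, but only with a constant proportional to $\xi^{-1}$, destroying the required uniformity in $\xi$.

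The paper circumvents this by first pinning down the mean of $\mu\hx\nn$ \emph{alone}: it tests \eqref{eq:modeldisc:pot} with $\Ih{\psi}$ for a fixed interior bump $\psi\in C_0^\infty\trkla{\Omega;\tekla{0,1}}$, so that all boundary contributions drop out and one obtains $\tabs{\iOmega\Ih{\mu\hx\nn\psi}\dx}\leq C\trkla{\psi}$; it then removes the interpolation operator through estimates that crucially use Lemma \ref{lem:Ih} together with assumption \ref{item:htau1} (note that your proposal never uses \ref{item:htau1}, even though it appears in the hypotheses --- this is where it enters), and applies the generalized Poincar\'e inequality for a single $H^1\trkla{\Omega}$-function constrained to the convex set $\mathcal{M}_\psi$. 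Only after $\norm{\mu\hx\nn}_{H^1\trkla{\Omega}}$ is controlled does the test $\hat{\psi}\h\equiv1$ come into play, now yielding a bound on $\tabs{\iGamma\theta\hx\nn\dG}$ in terms of the already-controlled $\iOmega\mu\hx\nn\dx$ and the potential terms, after which the ordinary Poincar\'e inequality on $\Gamma$ finishes the $H^1\trkla{\Gamma}$-bound on $\theta\hx\nn$. You would need to adopt this two-stage decoupling (or some equivalent device providing a second, $\xi$-independent constraint) to close the argument.
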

\begin{proof}
Summing the results of Lemma \ref{lem:energy} over all time steps, recalling the norm equivalence \eqref{eq:normequivalence} and the Poincar\'e-type estimate \eqref{eq:poincare}, and applying \eqref{eq:intialbounds}, we obtain \eqref{eq:regularity:energy} with bounds on the $H^1$-semi-norms of $\mu\hx\nn$ and $\theta\h\nn$.
To establish bounds for their complete $H^1$-norms, we extend the ideas of Lemma 4.5 in \cite{GarckeKnopf20}.\\
Testing \eqref{eq:modeldisc:pot} by $\Ih{\psi}$ with $\psi\in C_0^\infty\trkla{\Omega;\tekla{0,1}}$, which is not identically zero, we obtain
\begin{align}
\begin{split}
\iOmega\Ih{\mu\hx\nn\psi}\dx=&\,\iOmega\nabla\phi\hx\nn\cdot\nabla\Ih{\psi}\dx + \frac{r\hx\nn}{\sqrt{E\h^\Omega\trkla{\phi\hx\no}}}\iOmega\Ih{F^\prime\trkla{\phi\hx\no}\psi}\dx\,.
\end{split}
\end{align}
From \ref{item:potential}, the norm equivalence \eqref{eq:normequivalence}, and the already established regularity results, we obtain
\begin{align}\label{eq:est:potential}
\abs{\frac{r\hx\nn}{\sqrt{E\h^\Omega\trkla{\phi\hx\no}}}\iOmega\Ih{F^\prime\trkla{\phi\hx\no}\psi}\dx}\leq C\gamma^{-1/2}\norm{\psi}_{L^\infty\trkla{\Omega}}\tabs{r\hx\nn}\rkla{\norm{\phi\hx\no}_{L^3\trkla{\Omega}}^3+C}\leq C\trkla{\psi}\,.
\end{align}
Hence, there exists a constant $\tilde{C}_1\trkla{\psi}>0$ independent of $h$, $\tau$, and $\xi$ such that $\displaystyle \abs{\iOmega\Ih{\mu\hx\nn\psi}\dx}\leq\tilde{C}_1\trkla{\psi}$.

We shall now derive a similar estimate without the interpolation operator.
Denoting the mean value of $\mu\hx\nn$ by $m\hx\nn$, we compute
\begin{align}
\begin{split}
\abs{\iOmega\mu\hx\nn\psi\dx}\leq& \abs{\iOmega\Ih{\mu\hx\nn\psi}\dx}+\abs{\iOmega\trkla{1-\Ihop}\gkla{\mu\hx\nn\Ih{\psi}}\dx}\\
&+\abs{\iOmega\overline{\mu}\hx\nn\trkla{1-\Ihop}\tgkla{\psi}\dx}+ \abs{m\hx\nn\iOmega\trkla{1-\Ihop}\tgkla{\psi}\dx}\\
=:&\, I+II+III+IV\,,
\end{split}
\end{align}
where $\overline{\mu}\hx\nn:=\mu\hx\nn-m\hx\nn$ has mean value zero.
The first term $I$ is bounded by $\tilde{C}_1\trkla{\psi}$.
Applying Lemma \ref{lem:Ih} and \ref{item:htau1}, we note that $II$ is bounded by $\tilde{C}_2\trkla{\psi}\geq C\trkla{\psi}\tfrac{h^2}{\sqrt{\tau}}\sqrt{\tau}\norm{\nabla\mu\hx\nn}_{L^2\trkla{\Omega}}$.
Combining Hölder's inequality, Poincar\'e's inequality, standard error estimates for the interpolation operator, and assumption \ref{item:htau1}, we obtain
\begin{align}
III\leq \norm{\overline{\mu}\hx\nn}_{L^2\trkla{\Omega}}\norm{\trkla{1-\Ihop}\tgkla{\psi}}_{L^2\trkla{\Omega}}\leq C\trkla{\psi}\sqrt{\tau}\norm{\nabla\mu\hx\nn}_{L^2\trkla{\Omega}}\leq \tilde{C}_3\trkla{\psi}\,.
\end{align}
To tackle $IV$, we use the standard error estimates for the interpolation operator $\Ihop$ to derive the existence of $c\trkla{\psi}> 0$ such that $\displaystyle\iOmega\Ih{\psi} \dx\geq c\trkla{\psi}$ for $h$ small enough.
This allows us to compute
\begin{align}
\begin{split}
III\leq&\,C\tabs{m\hx\nn} h^2\norm{\psi}_{H^2\trkla{\Omega}}\leq C\trkla{\psi}\tabs{m\hx\nn} h^2\iOmega\Ih{\psi}\dx\\
\leq&\,C\trkla{\psi}h^2\abs{\iOmega\trkla{m\hx\nn+\overline{\mu}\hx\nn}\Ih{\psi}\dx} + C\trkla{\psi}h^2\abs{\iOmega\overline{\mu}\hx\nn\Ih{\psi}\dx}\\
\leq&\,C\trkla{\psi} h^2 \trkla{I+II} + C\trkla{\psi}\sqrt{\tau}\norm{\nabla \mu\hx\nn}_{L^2\trkla{\Omega}}\norm{\Ih{\psi}}_{L^2\trkla{\Omega}}\\
\leq&\, C\trkla{\psi} \tilde{C}_1\trkla{\psi} + C\trkla{\psi}:=\tilde{C}_4\trkla{\psi}\,,
\end{split}
\end{align}
where we used \ref{item:htau1}, Poincar\'e's inequality, and the previously obtained regularity results.
Therefore, for $h$ small enough, $\mu\hx\nn$ is in the set
\begin{align}
\mathcal{M}_\psi:=\gkla{v\in H^1\trkla{\Omega}\,:\,\abs{\iOmega v\psi\dx}\leq \hat{C}\trkla{\psi}:=\tilde{C}_1\trkla{\psi}+\tilde{C}_2\trkla{\psi}+\tilde{C}_3\trkla{\psi}+\tilde{C}_4\trkla{\psi}}\,
\end{align}
with a constant $\hat{C}\trkla{\psi}$ depending on $\psi$ but not on $h$, $\tau$, or $\xi$.
As the set $\mathcal{M}_\psi$ is obviously a non-empty, closed, and convex subset of $H^1\trkla{\Omega}$, we can apply the generalized Poincar\'e's inequality (cf.~page 242 in \cite{Alt2016}) to obtain
\begin{align}
\norm{\mu\hx\nn}_{L^2}\leq C\trkla{1+\norm{\nabla\mu\hx\nn}_{L^2}}
\end{align}
with $C>0$ independent of $h$, $\tau$, or $\xi$, which provides the $H^1$-bound on $\mu\hx\nn$.
Testing \eqref{eq:modeldisc:pot} by $\psi\h\equiv1$ provides
\begin{multline}
\abs{\iGamma \theta\hx\nn\dG}\leq \abs{\iOmega\mu\hx\nn\dx} +\abs{\frac{r\hx\nn}{\sqrt{E\h^\Omega\trkla{\phi\hx\no}}}\iOmega\Ih{F^\prime\trkla{\phi\hx\no}}\dx}\\
 +\abs{\frac{s\hx\nn}{\sqrt{E\h^\Gamma\trkla{\phi\hx\no}}}\iGamma \IhG{G^\prime\trkla{\phi\hx\no}}\dG}\,.
\end{multline}
Then, considerations similar to \eqref{eq:est:potential} and the already established $L^\infty\trkla{H^1}$-bounds on $\phi\hx\nn$ indicate that the right-hand side is bounded independently of $h$, $\tau$, and $\xi$.
Therefore, applying Poincar\'e's inequality completes the proof.
\end{proof}

In the continuous setting, it can be proven that $\para{t}\phi\in L^2\trkla{0,T;\mathcal{V}^\prime}$ (see also Rem.~\ref{rem:uniqueness}).
In the discrete setting, however, such an estimate requires a $\mathcal{V}$-stable projection operator mapping $\mathcal{V}$ onto $\UhO\times\UhG$.
We shall therefore follow a different pathway and establish the following Nikolskii-type estimates:
\begin{lemma}\label{lem:nikolskii:phi}
Suppose that the assumptions \ref{item:T}, \ref{item:S1}, \ref{item:S2}, \ref{item:potential}, \ref{item:htau1}, and \ref{item:I} hold true. 
Then the solution $\trkla{\phi\hx\nn}_{n=1,\ldots,N}$ to \eqref{eq:modeldisc} satisfies 
\begin{subequations}
\begin{align}\label{eq:nikoslkii:phi}
\tau\sum_{k=0}^{N-l}\norm{\phi\hx^{k+l}-\phi\hx^k}_{L^2\trkla{\Omega}}^2+\tau\sum_{k=0}^{N-l}\norm{\phi\hx^{k+l}-\phi\hx^k}_{L^2\trkla{\Gamma}}^2\leq C\tau l\,,
\end{align}
for $l\in\tgkla{1,\ldots,N}$ with $C>0$ independent of $l$, $\xi$, $h$, and $\tau$.
Furthermore, we have
\begin{align}\label{eq:improvedphi}
\tau\sum_{k=0}^{N-1}\norm{\phi\hx^{k+1}-\phi\hx^k}_{L^2\trkla{\Omega}}^2+\tau\sum_{k=0}^{N-1}\norm{\phi\hx^{k+1}-\phi\hx^k}_{L^2\trkla{\Gamma}}^2\leq C\tau^{3/2}
\end{align}
\end{subequations}
with $C>0$ independent of $\xi$, $h$, and $\tau$.

\end{lemma}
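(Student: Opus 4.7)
The plan is to exploit the combined mass balance \eqref{eq:modeldisc:phase1}. Summing this equation over $n=k+1,\dots,k+l$ produces the telescoped identity
\begin{multline*}
\iOmega\Ih{(\phi\hx^{k+l}-\phi\hx^k)\psi\h}\dx+\beta^{-1}\iGamma\IhG{(\phi\hx^{k+l}-\phi\hx^k)\psi\h}\dG\\
=-\tau\sum_{n=k+1}^{k+l}\iOmega\nabla\mu\hx^n\cdot\nabla\psi\h\dx-\beta^{-1}\tau\sum_{n=k+1}^{k+l}\iGamma\nablaG\theta\hx^n\cdot\nablaG\psi\h\dG,
\end{multline*}
valid for every $\psi\h\in\UhO$. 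The crucial observation is that by the combined mass conservation \eqref{eq:conservation}, the difference $\phi\hx^{k+l}-\phi\hx^k$ together with its trace lies in $\mathcal{W}_{\beta,0}$, and is therefore admissible in the Poincaré-type estimate \eqref{eq:poincare}.

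\textbf{Proof of \eqref{eq:nikoslkii:phi}.} I would test the above identity with $\psi\h=\phi\hx^{k+l}-\phi\hx^k\in\UhO$. The norm equivalence \eqref{eq:normequivalence} turns the left-hand side into a multiple of $\norm{\phi\hx^{k+l}-\phi\hx^k}_{L^2(\Omega)}^2+\beta^{-1}\norm{\phi\hx^{k+l}-\phi\hx^k}_{L^2(\Gamma)}^2$. On the right-hand side, Cauchy–Schwarz in space followed by Cauchy–Schwarz in the inner sum over $n$ yields a factor $\sqrt{l}\bigl(\sum_{n=k+1}^{k+l}[\norm{\nabla\mu\hx^n}_{L^2(\Omega)}^2+\norm{\nablaG\theta\hx^n}_{L^2(\Gamma)}^2]\bigr)^{1/2}$ multiplying $\norm{\nabla(\phi\hx^{k+l}-\phi\hx^k)}_{L^2(\Omega)}+\norm{\nablaG(\phi\hx^{k+l}-\phi\hx^k)}_{L^2(\Gamma)}$, which is uniformly bounded by Lemma \ref{lem:regularity}. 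Summing the resulting estimate over $k=0,\dots,N-l$ and applying Cauchy–Schwarz once more in $k$, together with the elementary interchange of summation $\sum_{k=0}^{N-l}\sum_{n=k+1}^{k+l}a_n\le l\sum_{n=1}^{N}a_n$, reduces the right-hand side to a constant multiple of $\tau\sqrt{l}\sqrt{N}\sqrt{l/\tau}\cdot\tau=\tau l\sqrt{\tau N}=\tau l\sqrt{T}$, which is exactly $C\tau l$. Here I crucially use the $L^2(0,T;H^1)$-type bound $\tau\sum_{n}[\norm{\mu\hx^n}_{H^1(\Omega)}^2+\norm{\theta\hx^n}_{H^1(\Gamma)}^2]\le C$ from Lemma \ref{lem:regularity}.

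\textbf{Proof of \eqref{eq:improvedphi}.} For $l=1$ the same test is better exploited by not using the uniform bound on $\norm{\nabla(\phi\hx^{k+1}-\phi\hx^k)}_{L^2}$ but rather its summability provided by Lemma \ref{lem:regularity}. After testing, I would split the right-hand side with the weighted Young inequality $\tau ab\le\tfrac12\tau^{3/2}a^2+\tfrac12\tau^{1/2}b^2$, obtaining for each $k$
\begin{multline*}
\norm{\phi\hx^{k+1}-\phi\hx^k}_{L^2(\Omega)}^2+\beta^{-1}\norm{\phi\hx^{k+1}-\phi\hx^k}_{L^2(\Gamma)}^2\\
\le C\tau^{3/2}\bigl[\norm{\nabla\mu\hx^{k+1}}_{L^2(\Omega)}^2+\norm{\nablaG\theta\hx^{k+1}}_{L^2(\Gamma)}^2\bigr]+C\tau^{1/2}\bigl[\norm{\nabla(\phi\hx^{k+1}-\phi\hx^k)}_{L^2(\Omega)}^2+\norm{\nablaG(\phi\hx^{k+1}-\phi\hx^k)}_{L^2(\Gamma)}^2\bigr].
\end{multline*}
Summing over $k$ and invoking both $\sum_k\norm{\mu\hx^{k+1}}_{H^1(\Omega)}^2+\norm{\theta\hx^{k+1}}_{H^1(\Gamma)}^2\le C\tau^{-1}$ and $\sum_k[\norm{\nabla(\phi\hx^{k+1}-\phi\hx^k)}_{L^2(\Omega)}^2+\norm{\nablaG(\phi\hx^{k+1}-\phi\hx^k)}_{L^2(\Gamma)}^2]\le C$ from Lemma \ref{lem:regularity} yields $\sum_k\norm{\phi\hx^{k+1}-\phi\hx^k}_{\widetilde{\mathcal{L}}^2}^2\le C\tau^{1/2}$, whence \eqref{eq:improvedphi}.

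\textbf{Main obstacle.} The non-trivial point is that the combined equation \eqref{eq:modeldisc:phase1} (which mixes bulk and boundary time derivatives) is the only relation whose telescoping cleanly eliminates intermediate time levels while still admitting a test function in $\UhO$; in particular, the individual boundary equation \eqref{eq:modeldisc:phase2} cannot be handled the same way because of the penalty term $\tfrac{\xi}{1+\xi}\beta(\beta\theta\hx\nn-\mu\hx\nn)$, which does not telescope and whose $L^2(\Gamma)$-norm is only controlled by $C\xi^{-1}$. Restricting to \eqref{eq:modeldisc:phase1} produces constants independent of $\xi\in[0,\infty]$, as required for the limit passage in Section \ref{sec:limit}.
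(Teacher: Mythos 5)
Your proposal is correct and follows essentially the same route as the paper: telescoping \eqref{eq:modeldisc:phase1}, testing with $\phi\hx^{k+l}-\phi\hx^k$, invoking the norm equivalence \eqref{eq:normequivalence} and the bounds of Lemma \ref{lem:regularity}, with the uniform $L^\infty\trkla{H^1}$ bound on the gradient difference for general $l$ and its summability for $l=1$ (your weighted Young inequality is just a reshuffling of the paper's Cauchy--Schwarz step). The only cosmetic difference is your appeal to the Poincar\'e-type inequality \eqref{eq:poincare}, which is not actually needed since the tested left-hand side already yields the $\widetilde{\mathcal{L}}^2$-norm directly.
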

\begin{proof}
For $0\leq k\leq N-l$, we test \eqref{eq:modeldisc:phase1} by $\psi\h=\trkla{\phi\hx^{k+l}-\phi\hx^k}$ and sum from $n=k+1$ to $k+l$ to obtain
\begin{align}
\begin{split}\label{eq:nikoslkii:phi:1}
\iOmega&\Ih{\tabs{\phi\hx^{k+l}-\phi\hx^k}^2}\dx+\beta^{-1}\iGamma\IhG{\tabs{\phi\hx^{k+l}-\phi\hx^k}^2}\dG\\
&\leq \abs{\tau\sum_{n=k+1}^{k+l}\iOmega\nabla\mu\hx\nn\cdot\nabla\trkla{\phi\hx^{k+l}-\phi\hx^k}\dx} +\beta^{-1}\abs{\tau\sum_{n=k+1}^{k+l}\iGamma\nablaG\theta\hx\nn\cdot\nablaG\trkla{\phi\hx^{k+l}-\phi\hx^k}\dG}\\
&\leq \tau\sum_{n=k+1}^{k+l}\norm{\nabla\mu\hx\nn}_{L^2\trkla{\Omega}}\norm{\nabla\phi\hx^{k+l}-\nabla \phi\hx^k}_{L^2\trkla{\Omega}}\\
&\qquad+\beta^{-1}\tau\sum_{n=k+1}^{k+l} \norm{\nablaG\theta\hx\nn}_{L^2\trkla{\Gamma}}\norm{\nablaG\phi\hx^{k+l}-\nablaG\phi\hx^k}_{L^2\trkla{\Gamma}}\,.
\end{split}
\end{align}
In order to prove \eqref{eq:nikoslkii:phi}, we multiply \eqref{eq:nikoslkii:phi:1} by $\tau$ and sum from $k=0$ to $N-l$.
Using the norm equivalence \eqref{eq:normequivalence} and the regularity results from Lemma \ref{lem:regularity}, we obtain
\begin{align}
\begin{split}
\tau&\sum_{k=0}^{N-l}\norm{\phi\hx^{k+l}-\phi\hx^k}_{L^2\trkla{\Omega}}^2+\tau\sum_{k=0}^{N-l}\norm{\phi\hx^{k+l}-\phi\hx^k}_{L^2\trkla{\Gamma}}^2\\
&\leq C\tau^2\sum_{m=1}^{l}\ekla{\rkla{\sum_{k=0}^{N-l}\norm{\nabla\phi\hx^{k+l}-\nabla\phi\hx^k}_{L^2\trkla{\Omega}}^2}^{1/2}\rkla{\sum_{k=0}^{N-l}\norm{\nabla\mu\hx^{k+m}}_{L^2\trkla{\Omega}}}^{1/2}}\\
&\qquad +C\tau^2\sum_{m=1}^l\ekla{\rkla{\sum_{k=0}^{N-l}\norm{\nablaG\phi\hx^{k+l}-\nablaG\phi\hx^k}_{L^2\trkla{\Gamma}}^2}^{1/2}\rkla{\sum_{k=0}^{N-l}\norm{\nablaG\theta\hx^{k+m}}_{L^2\trkla{\Gamma}}^2}^{1/2}}\\
&\leq C\tau l\rkla{\tau N\max_{n=1,\ldots,N}\norm{\nabla\phi\hx\nn}_{L^2\trkla{\Omega}}^2}^{1/2}\rkla{\tau\sum_{n=1}^N\norm{\nabla\mu\hx\nn}_{L^2\trkla{\Omega}}^2}^{1/2}\\
&\qquad +C\tau l\rkla{\tau N\max_{n=1,\ldots,N}\norm{\nablaG\phi\hx\nn}_{L^2\trkla{\Gamma}}^2}^{1/2}\rkla{\tau\sum_{n=1}^N\norm{\nablaG\theta\h\nn}_{L^2\trkla{\Gamma}}^2}^{1/2}\leq C\tau l\,.
\end{split}
\end{align}
In the special case $l=1$, we are able to obtain better estimates for the right-hand side of \eqref{eq:nikoslkii:phi:1}.
In particular, combining the norm equivalence \eqref{eq:normequivalence} and the regularity results from Lemma \ref{lem:regularity}, provides \eqref{eq:improvedphi} via
\begin{align}
\begin{split}
\tau&\sum_{k=0}^{N-1}\norm{\phi\hx^{k+1}-\phi\hx^k}_{L^2\trkla{\Omega}}^2+\tau\sum_{k=0}^{N-1}\norm{\phi\hx^{k+1}-\phi\hx^k}_{L^2\trkla{\Gamma}}^2\\
&\leq C\tau^{3/2}\rkla{\sum_{k=0}^{N-1}\norm{\nabla\phi\hx^{k+1}-\nabla\phi\hx^k}_{L^2\trkla{\Omega}}^2}^{1/2}\rkla{\tau\sum_{n=1}^N\norm{\nabla\mu\hx\nn}_{L^2\trkla{\Omega}}^2}^{1/2}\\
&\qquad +C\tau^{3/2}\rkla{\sum_{k=0}^{N-1}\norm{\nablaG\phi\hx^{k+1}-\nablaG\phi\hx^k}_{L^2\trkla{\Gamma}}^2}^{1/2}\rkla{\tau\sum_{n=1}^N\norm{\nablaG\theta\hx\nn}_{L^2\trkla{\Gamma}}^2}^{1/2}\leq C\tau^{3/2}\,.
\end{split}
\end{align}
\end{proof}
\begin{remark}
Comparing the estimate \eqref{eq:improvedphi} with the estimates on $\nabla\phi\hx\nn-\nabla\phi\hx\no$ established in Lemma \ref{lem:regularity}, we obtain by applying inverse estimates
\begin{multline}
\sum_{n=1}^N\norm{\phi\hx\nn-\phi\hx\no}_{H^1\trkla{\Omega}}^2+\sum_{n=1}^N\norm{\phi\hx\nn-\phi\hx\no}_{H^1\trkla{\Gamma}}^2\\
\leq C h^{-2}\rkla{\sum_{n=1}^N\norm{\phi\hx\nn-\phi\hx\no}_{L^2\trkla{\Omega}}^2 + \sum_{n=1}^N\norm{\phi\hx\nn-\phi\hx\no}_{L^2\trkla{\Gamma}}^2}\leq C\frac{\sqrt{\tau}}{h^2}\,.
\end{multline}
Recalling assumption \ref{item:htau1}, we note that $\displaystyle \frac{\sqrt{\tau}}{h^2}\nearrow \infty$.
Therefore, Lemma \ref{lem:nikolskii:phi} improves the bounds on the $L^2$-norms, but is insufficient to enhance the bounds on the $H^1$-norms.
\end{remark}

\section{Convergence to the limit}\label{sec:limit}
We will now use the regularity results establish in the last section to identify converging subsequences and pass to the limit $\trkla{h,\tau,\xi}\rightarrow\trkla{0,0,\xi_0}$ in \eqref{eq:modeldisc}.
Therefore, we define time-interpolants of time-discrete functions $a\nn$, $n=0,\ldots,N$, and introduce some time-index-free notation as follows:
\begin{subequations}\label{eq:def:interpolation}
\begin{align}
a\tl\trkla{.,t}&:=\tfrac{t-t\no}{\tau}a\nn\trkla{.}+\tfrac{t\nn-t}{\tau}a\no\trkla{.}&t&\in\tekla{t\no,t\nn},~n\geq1\,\,,\\
a\tp\trkla{.,t}&:=a\nn\trkla{.}\,,\quad a\tm\trkla{.,t}:=a\no\trkla{.}&t&\in(t\no,t\nn]\,,~n\geq 1\,.
\end{align}
\end{subequations}
We want to point out that the time derivative of the piecewise linear interpolation $a\tl$ coincides with the difference quotient in time, i.e.~$\para{t}a\tl=\dtau a\nn$.
If a statement is valid for all three time-interpolants defined in \eqref{eq:def:interpolation}, we shall use the abbreviation $a\tpm$.\\
Using the interpolants defined in \eqref{eq:def:interpolation}, we are able to summarize the regularity results obtained in the last section as follows
\begin{subequations}\label{eq:regularity}
\begin{multline}\label{eq:regularity:energy}
\norm{\phi\hx\tpm}_{L^\infty\trkla{0,T;H^1\trkla{\Omega}}}^2+\norm{\phi\hx\tpm}_{L^\infty\trkla{0,T;H^1\trkla{\Gamma}}}^2+\norm{r\hx\tpm}_{L^\infty\trkla{0,T}}^2 +\norm{s\hx\tpm}_{L^\infty\trkla{0,T}}^2\\
+\tau^{-1}\norm{\phi\hx\tp-\phi\hx\tm}_{L^2\trkla{0,T;H^1{\trkla{\Omega}}}}^2+\tau^{-3/2}\norm{\phi\hx\tp-\phi\hx\tm}_{L^2\trkla{0,T;L^2\trkla{\Omega}}}^2\\
+\tau^{-1}\norm{\phi\hx\tp-\phi\hx\tm}_{L^2\trkla{0,T;H^1{\trkla{\Gamma}}}}^2+\tau^{-3/2}\norm{\phi\hx\tp-\phi\hx\tm}_{L^2\trkla{0,T;L^2\trkla{\Gamma}}}^2\\
+\tau^{-1}\norm{r\hx\tp-r\hx\tm}_{L^2\trkla{0,T}}^2 +\tau^{-1}\norm{s\hx\tp-s\hx\tm}_{L^2\trkla{0,T}}^2 \\
+\norm{\mu\hx\tp}_{L^2\trkla{0,T;H^1\trkla{\Omega}}}^2+\norm{\theta\hx\tp}_{L^2\trkla{0,T;H^1\trkla{\Gamma}}}^2\leq C\,,
\end{multline}
\begin{align}
\xi\norm{\beta\theta\hx\tp-\mu\hx\tp}_{L^2\trkla{0,T;L^2\trkla{\Gamma}}}^2&\leq C\,,\label{eq:regularity:jump}
\end{align}
\begin{align}
\norm{\phi\hx\tpm\trkla{\cdot+ l\tau}-\phi\hx\tpm}_{L^2\trkla{0,T-l\tau;L^2\trkla{\Omega}}}^2\!\!+\norm{\phi\hx\tpm\trkla{\cdot+ l\tau}-\phi\hx\tpm}_{L^2\trkla{0,T-l\tau;L^2\trkla{\Gamma}}}^2\!&\leq Cl\tau\,.\label{eq:regularity:nikolskii}
\end{align}
\end{subequations}

\begin{lemma}\label{lem:convergence}
Let the assumptions \ref{item:T}, \ref{item:S1}, \ref{item:S2}, \ref{item:potential}, \ref{item:htau1}, and \ref{item:I} hold true.
Then, there exist functions $\trkla{\phi,\,\phi_\Gamma,\,\mu,\,\theta,\,r,\,s}$ and a subsequence, which is again denoted by $\gkla{\phi\hx\tpm,\,\mu\hx\tp,\,\theta\hx\tp,\, r\hx\tpm,\, s\hx\tpm}_{h,\tau,\xi}$ satisfying
\begin{align}
\left\{\begin{array}{ll}
\phi\in L^\infty\trkla{0,T;H^1\trkla{\Omega}}\,,&\phi_{\Gamma}\in L^\infty\trkla{0,T;H^1\trkla{\Gamma}}\,,\\
\mu\in L^2\trkla{0,T;H^1\trkla{\Omega}}\,,&\theta\in L^2\trkla{0,T;H^1\trkla{\Gamma}}\,,\\
r\in L^\infty\trkla{0,T}\,,&s\in L^\infty\trkla{0,T}
\end{array}\right.
\end{align}
such that $\phi_\Gamma=\phi\big\vert_{\Gamma}$ a.e.~on $\Gamma\times\trkla{0,T}$, and
\begin{subequations}
\begin{align}
\phi\hx\tpm&\weakstar\phi&&\text{in~}L^\infty\trkla{0,T;H^1\trkla{\Omega}}\,,\label{eq:conv:phiO:weakstar}\\
\phi\hx\tpm &\rightarrow \phi&&\text{in~} L^r\trkla{0,T;L^s\trkla{\Omega}}\text{~with~}r<\infty,\,s\in[1,\tfrac{2d}{d-2})\,,\label{eq:conv:phiO:strong}\\
\phi\hx\tpm\big\vert_{\Gamma}&\weakstar \phi_\Gamma&&\text{in~}L^\infty\trkla{0,T;H^1\trkla{\Gamma}}\,,\label{eq:conv:phiG:weakstar}\\
\phi\hx\tpm\big\vert_{\Gamma}&\rightarrow \phi_\Gamma&&\text{in~}L^r\trkla{0,T;L^s\trkla{\Gamma}}\text{~with~}r,\,s<\infty\,,\label{eq:conv:phiG:strong}\\
\mu\hx\tp&\weak\mu&&\text{in~} L^2\trkla{0,T;H^1\trkla{\Omega}}\,,\label{eq:conv:muO:weak}\\
\mu\hx\tp\big\vert_{\Gamma}&\weak\mu\big\vert_{\Gamma}&&\text{in~} L^2\trkla{0,T;H^{1/2}\trkla{\Gamma}}\,,\label{eq:conv:muOtrace:weak}\\
\theta\hx\tp&\weak\theta&&\text{in~} L^2\trkla{0,T;H^1\trkla{\Gamma}}\,,\label{eq:conv:theta:weak}\\
r\hx\tpm&\weakstar r&&\text{in~}L^\infty\trkla{0,T}\,,\label{eq:conv:r:weakstar}\\
s\hx\tpm&\weakstar s&&\text{in~}L^\infty\trkla{0,T}\,\label{eq:conv:s:weakstar}
\end{align}
as $\trkla{h,\tau,\xi}\rightarrow\trkla{0,0,\xi_0}$.
If $\xi_0=\infty$, we additionally obtain
\begin{align}
\beta\theta\hx\tp-\mu\hx\tp\big\vert_{\Gamma}\rightarrow 0&&\text{in~}L^2\trkla{0,T;L^2\trkla{\Gamma}}\label{eq:conv:jump}\,.
\end{align}
\end{subequations}
\end{lemma}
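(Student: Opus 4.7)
The plan is to exploit the uniform a priori bounds collected in \eqref{eq:regularity:energy}--\eqref{eq:regularity:nikolskii}, applying Banach--Alaoglu for the weak/weak-$*$ limits and an Aubin--Lions--Simon type compactness argument for the strong convergence of the phase-field. Only the limit \eqref{eq:conv:jump} genuinely depends on the value of $\xi_0$, and it will follow at once from \eqref{eq:regularity:jump}.

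First, I would harvest the weak limits. Along a subsequence (not relabeled), Banach--Alaoglu applied to \eqref{eq:regularity:energy} immediately yields \eqref{eq:conv:phiO:weakstar}, \eqref{eq:conv:phiG:weakstar}, \eqref{eq:conv:muO:weak}, \eqref{eq:conv:theta:weak}, \eqref{eq:conv:r:weakstar}, and \eqref{eq:conv:s:weakstar} for some limits $\phi,\phi_\Gamma,\mu,\theta,r,s$ in the asserted spaces. The weak convergence \eqref{eq:conv:muOtrace:weak} of the traces in $L^2\trkla{0,T;H^{1/2}\trkla{\Gamma}}$ then follows from \eqref{eq:conv:muO:weak} by continuity of the trace operator $H^1\trkla{\Omega}\rightarrow H^{1/2}\trkla{\Gamma}$.

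Second, I would upgrade the convergence of the phase-field to strong convergence. Using the compact embedding $H^1\trkla{\Omega}\compemb L^s\trkla{\Omega}$ for every $s<\tfrac{2d}{d-2}$, the uniform $L^\infty\trkla{0,T;H^1\trkla{\Omega}}$-bound from \eqref{eq:regularity:energy}, and the time-translation estimate \eqref{eq:regularity:nikolskii}, a Simon-type compactness theorem (analogous to the strategy used in \cite{Metzger2021a} and \cite{KnopfLamLiuMetzger2021}) delivers $\phi\hx\tp\rightarrow\phi$ in $L^r\trkla{0,T;L^s\trkla{\Omega}}$ for any $r<\infty$ and $s\in[1,\tfrac{2d}{d-2})$, giving \eqref{eq:conv:phiO:strong} for the upper-index interpolant. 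An intrinsic application of the same result on the boundary, using $H^1\trkla{\Gamma}\compemb L^s\trkla{\Gamma}$ for every $s<\infty$, the independent $L^\infty\trkla{0,T;H^1\trkla{\Gamma}}$-bound, and the Nikolskii estimate on $\Gamma$ from \eqref{eq:regularity:nikolskii}, produces \eqref{eq:conv:phiG:strong}. Comparing the respective strong limits via continuity of the trace yields $\phi_\Gamma=\phi\big\vert_\Gamma$ almost everywhere on $\Gamma\times\trkla{0,T}$. To transfer the strong convergence from $\phi\hx\tp$ to $\phi\hx\tl$ and $\phi\hx\tm$, I would use
\begin{equation*}
\norm{\phi\hx\tp-\phi\hx\tl}_{L^2\trkla{0,T;L^2\trkla{\Omega}}}^2 \leq \tfrac{\tau}{3}\sum_{n=1}^N\norm{\phi\hx\nn-\phi\hx\no}_{L^2\trkla{\Omega}}^2\leq C\tau^{3/2}
\end{equation*}
together with the analogous estimate on $\Gamma$, both via \eqref{eq:improvedphi}, so that all three interpolants share the same strong limits.

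Finally, in the regime $\xi_0=\infty$, the estimate \eqref{eq:regularity:jump} immediately gives
\begin{equation*}
\norm{\beta\theta\hx\tp-\mu\hx\tp\big\vert_\Gamma}_{L^2\trkla{0,T;L^2\trkla{\Gamma}}}^2\leq C\xi^{-1}\longrightarrow 0\quad\text{as }\xi\rightarrow\infty,
\end{equation*}
which is exactly \eqref{eq:conv:jump}. The principal technical point I expect is the strong compactness of $\phi$ on the boundary: since the trace operator $H^1\trkla{\Omega}\rightarrow H^1\trkla{\Gamma}$ is \emph{not} compact, the boundary strong convergence cannot simply be inferred from the bulk compactness. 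It has to be obtained from the autonomous bounds on $\phi\hx\tpm\big\vert_\Gamma$ supplied by Lemmas \ref{lem:regularity} and \ref{lem:nikolskii:phi}, which is precisely what makes these intrinsic boundary estimates crucial to the argument.
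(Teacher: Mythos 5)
Your proposal follows essentially the same route as the paper: Banach--Alaoglu for the weak and weak-$*$ limits, Simon's compactness theorem applied separately in the bulk and on the boundary (using the intrinsic boundary bounds from Lemmas \ref{lem:regularity} and \ref{lem:nikolskii:phi}) for the strong convergences, and the bound \eqref{eq:regularity:jump} for \eqref{eq:conv:jump}. The only place where you genuinely diverge is the identification of the limits of the traces: the paper identifies the weak $L^2\trkla{0,T;H^{1/2}\trkla{\Gamma}}$-limit $\nu$ of $\mu\hx\tp\big\vert_\Gamma$ with $\mu\big\vert_\Gamma$ by testing against $\div\bs{\psi}$ for smooth vector fields $\bs{\psi}$ and integrating by parts, whereas you invoke the weak--weak continuity of the bounded linear trace operator; your argument is shorter and equally valid. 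One caveat: your phrase ``comparing the respective strong limits via continuity of the trace'' for the identification $\phi_\Gamma=\phi\big\vert_\Gamma$ should not be read literally, since the trace is not continuous from $L^s\trkla{\Omega}$ to $L^s\trkla{\Gamma}$ (a point you yourself emphasize at the end); the identification has to go through the weak($*$) convergences --- the trace is weakly continuous as a bounded map $H^1\trkla{\Omega}\rightarrow L^2\trkla{\Gamma}$, and weak limits in $L^2\trkla{0,T;L^2\trkla{\Gamma}}$ are unique --- which is also what the paper's divergence-theorem computation accomplishes. With that reading, your argument is complete, and the explicit estimate transferring the strong convergence from $\phi\hx\tp$ to $\phi\hx\tl$ and $\phi\hx\tm$ via \eqref{eq:improvedphi} is a welcome detail the paper leaves implicit.
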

\begin{proof}
The convergence results expressed in \eqref{eq:conv:phiO:weakstar}, \eqref{eq:conv:phiG:weakstar}, \eqref{eq:conv:muO:weak}, \eqref{eq:conv:theta:weak}, \eqref{eq:conv:r:weakstar}, and \eqref{eq:conv:s:weakstar} are direct consequences of the bounds listed in \eqref{eq:regularity:energy}.

The strong convergence of $\phi\hx\tpm$ stated in \eqref{eq:conv:phiO:strong} and \eqref{eq:conv:phiG:strong} follows from Simon's compactness result (cf.~Th.~5 in \cite{Simon1987}) in combination with the bounds in \eqref{eq:regularity:energy} and \eqref{eq:regularity:nikolskii}.\\
Due to the trace theorem, the uniform bound on $\mu\hx\tp$ in $L^2\trkla{0,T;H^1\trkla{\Omega}}$ also provides a uniform bound in $L^2\trkla{0,T;H^{1/2}\trkla{\Gamma}}$.
Therefore, there exists a subsequence -- again denoted by $\gkla{\mu\hx\tp\big\vert_{\Gamma}}_{h,\tau,\xi}$ -- converging towards a limit function $\nu\in L^2\trkla{0,T; H^{1/2}\trkla{\Gamma}}$.
To obtain \eqref{eq:conv:muOtrace:weak}, we need to identify this limit function $\nu$ with $\mu\big\vert_{\Gamma}$.
Following the lines of \cite{Metzger2021a}, we choose $\bs{\psi}\in L^2\trkla{0,T;\trkla{C^\infty\trkla{\overline{\Omega}}}^d}$ and compute
\begin{multline}
\iOmegaT \mu\div\bs{\psi}\dx\dt\leftarrow \iOmegaT\mu\hx\tp\div\bs{\psi}\dx\dt\\
=-\iOmegaT\nabla\mu\hx\tp\cdot\bs{\psi}\dx\dt+\iGammaT \mu\hx\tp\big\vert_{\Gamma}\bs{\psi}\cdot\bs{n}\dG\dt
\rightarrow -\iOmegaT\nabla\mu\cdot\bs{\psi}\dx\dt+\iGammaT\nu\bs{\psi}\cdot\bs{n}\dG\dt\\
=\iOmegaT\mu\div\bs{\psi}\dx\dt+\iGammaT\rkla{\nu-\mu\big\vert_{\Gamma}}\bs{\psi}\cdot\bs{n}\dG\dt\,.
\end{multline}
The identification of $\phi_\Gamma$ with $\phi\big\vert_{\Gamma}$ follows by a similar argument.
Finally, \eqref{eq:conv:jump} is a consequence of \eqref{eq:regularity:jump}.
\end{proof}
\begin{corollary}\label{cor:convergenceE}
Let the assumptions \ref{item:T}, \ref{item:S1}, \ref{item:S2}, \ref{item:potential}, \ref{item:htau1}, and \ref{item:I} hold true.
Then, there exists a subsequence $\tgkla{\phi\hx\tpm}_{h,\tau,\xi}$ such that 
\begin{subequations}
\begin{align}
\iOmega \Ih{ F\trkla{\phi\hx\tpm}}\dx&\rightarrow \iOmega F\trkla{\phi}\dx&&\text{in~} L^p\trkla{0,T}\,,\label{eq:convEO}\\
\iGamma \IhG{ G\trkla{\phi\hx\tpm}}\dG&\rightarrow \iGamma G\trkla{\phi\big\vert_{\Gamma}}\dG&&\text{in~} L^p\trkla{0,T}\label{eq:convEG}\,,
\end{align}
\end{subequations}
for $p<\infty$ as $\trkla{h,\tau,\xi}\rightarrow\trkla{0,0,\xi_0}$.
\end{corollary}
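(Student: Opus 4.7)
The plan is to establish pointwise (in time) convergence along a subsequence and then upgrade to $L^p\trkla{0,T}$ via Lebesgue's dominated convergence theorem. I decompose
\[
\iOmega\Ih{F\trkla{\phi\hx\tpm}}\dx-\iOmega F\trkla{\phi}\dx
=\iOmega\trkla{\Ihop-\mathrm{id}}F\trkla{\phi\hx\tpm}\dx
+\iOmega\ekla{F\trkla{\phi\hx\tpm}-F\trkla{\phi}}\dx
=:\mathrm{I}\h\trkla{t}+\mathrm{II}\h\trkla{t}.
\]
The uniform $L^\infty\trkla{0,T}$ bound needed for dominated convergence is immediate from \ref{item:potential}, the norm equivalence \eqref{eq:normequivalence}, the embedding $H^1\trkla{\Omega}\emb L^4\trkla{\Omega}$, and \eqref{eq:regularity:energy}.

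For $\mathrm{I}\h$, I would proceed elementwise. On each $K\in\Th$ the oscillation of $F\trkla{\phi\hx\tpm}$ can be controlled, via the mean value theorem and the growth bound on $F^\prime$, by $C\rkla{1+\norm{\phi\hx\tpm}_{L^\infty\trkla{K}}^3}h\norm{\nabla\phi\hx\tpm}_{L^\infty\trkla{K}}$. Multiplying by $\tabs{K}$, summing over $K$, applying Cauchy--Schwarz, and using the scaling identity $\tabs{K}\norm{\phi\hx\tpm}_{L^\infty\trkla{K}}^6\leq C\norm{\phi\hx\tpm}_{L^6\trkla{K}}^6$ (valid for piecewise linear functions on shape-regular simplices) together with $\sum_K\tabs{K}\norm{\nabla\phi\hx\tpm}_{L^\infty\trkla{K}}^2=\norm{\nabla\phi\hx\tpm}_{L^2\trkla{\Omega}}^2$ and the Sobolev embedding $H^1\emb L^6$ yields
\[
\abs{\mathrm{I}\h\trkla{t}}\leq Ch\rkla{1+\norm{\phi\hx\tpm\trkla{t}}_{H^1\trkla{\Omega}}^3}\norm{\nabla\phi\hx\tpm\trkla{t}}_{L^2\trkla{\Omega}}\leq Ch
\]
uniformly in $t\in\trkla{0,T}$.

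For $\mathrm{II}\h$, the mean value theorem together with $\tabs{F^\prime\trkla{\zeta}}\leq C\trkla{1+\tabs{\zeta}^3}$ gives $\abs{F\trkla{\phi\hx\tpm}-F\trkla{\phi}}\leq C\trkla{1+\tabs{\phi\hx\tpm}^3+\tabs{\phi}^3}\tabs{\phi\hx\tpm-\phi}$. Choosing $s\in\trkla{3,\tfrac{2d}{d-2}}$ (non-empty for $d\leq3$) and applying H\"older's inequality with exponents $s/3$ and $s/\trkla{s-3}$ bounds $\abs{\mathrm{II}\h\trkla{t}}$ by $C\rkla{1+\norm{\phi\hx\tpm\trkla{t}}_{L^s}^3+\norm{\phi\trkla{t}}_{L^s}^3}\norm{\phi\hx\tpm\trkla{t}-\phi\trkla{t}}_{L^{s/\trkla{s-3}}}$, and the strong convergence \eqref{eq:conv:phiO:strong} (after extracting a further subsequence for almost-everywhere convergence in time) forces this quantity to zero for a.e.\ $t$. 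Combining the two pointwise limits with the uniform $L^\infty\trkla{0,T}$ bound, dominated convergence then yields \eqref{eq:convEO}.

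The boundary statement \eqref{eq:convEG} follows by the verbatim argument on $\Gamma$, now based on \eqref{eq:conv:phiG:strong}, which is at our disposal for every finite $s$ in any admissible dimension, and on the growth/structure of $G$ in \ref{item:potential}. I expect the genuine obstacle to be the interpolation-error term $\mathrm{I}\h$: reconciling the only $H^1$ spatial regularity of $\phi\hx\tpm$ with the cubic nonlinearity $F^\prime$ forces the use of the local finite-element scaling identities above, and it is precisely the quartic-growth assumption in \ref{item:potential} (rather than mere continuity of $F$) that makes this elementwise bookkeeping close up.
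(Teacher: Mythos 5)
Your proof is correct and follows essentially the same route as the paper: the interpolation error is handled elementwise via the growth of $F^\prime$, local inverse/scaling estimates, and the uniform $L^\infty\trkla{0,T;H^1\trkla{\Omega}}$ bound to obtain an $O\trkla{h}$ bound uniformly in time, while the remaining term $\iOmega\ekla{F\trkla{\phi\hx\tpm}-F\trkla{\phi}}\dx$ is sent to zero using the strong convergence of $\phi\hx\tpm$ and dominated convergence (the paper argues via pointwise a.e.\ convergence together with the growth condition rather than your quantitative mean-value/H\"older estimate, but this is a cosmetic difference). One minor bookkeeping point: for $d=3$ your H\"older pairing also requires $s/\trkla{s-3}<\tfrac{2d}{d-2}=6$, i.e.\ $s\in\trkla{18/5,6}$ rather than all of $\trkla{3,6}$; this window is still non-empty, so the argument closes as claimed.
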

\begin{proof}
To establish \eqref{eq:convEO}, we shall first show that the nodal interpolation is negligible when passing to the limit.
Looking at the error on each $K\in\Th$ individually, we note that $\nabla\phi\hx\tpm$ is constant on each $K$ and apply \ref{item:potential} and an inverse estimate (cf.~Lemma 4.5.3 in \cite{BrennerScott}) to obtain
\begin{align}
\begin{split}
\int_{K}&\abs{\trkla{1-\Ihop}\gkla{F\trkla{\phi\hx\tpm}}}\dx \leq \tabs{K}\norm{\trkla{1-\Ihop}\gkla{F\trkla{\phi\hx\tpm}}}_{L^\infty\trkla{K}}\\
&\quad\leq C\tabs{K}h\norm{\nabla F\trkla{\phi\hx\tpm}}_{L^\infty\trkla{K}}=C\tabs{K}h\norm{F^\prime\trkla{\phi\hx\tpm}\nabla\phi\hx\tpm}_{L^\infty\trkla{K}}\\
&\quad\leq C\tabs{K} h \rkla{1+\norm{\phi\hx\tpm}_{L^\infty\trkla{K}}^3}\abs{\nabla\phi\hx\tpm}\\
&\quad\leq C\tabs{K} h\rkla{1+\norm{\phi\hx\tpm}_{L^\infty\trkla{K}}^6+ \abs{\nabla\phi\hx\tpm}^2}\\
&\quad\leq C\tabs{K} h + C\tabs{K} h h^{-d}\norm{\phi\hx\tpm}_{L^6\trkla{K}}^6 + C h \norm{\nabla\phi\hx\tpm}_{L^2\trkla{K}}^2\,.
\end{split}
\end{align}
Due to \ref{item:S1}, we have $\tabs{K}\leq C h^d$. Therefore, summing over all $K\in\Th$ and recalling \eqref{eq:regularity:energy:a} yields
\begin{multline}
\esssup_{t\in\trkla{0,T}}\iOmega\abs{\trkla{1-\Ihop}\gkla{F\trkla{\phi\hx\tpm}}}\dx\\
\leq\, Ch\tabs{\Omega} + Ch\esssup_{t\in\trkla{0,T}}\norm{\phi\hx\tpm}_{H^1\trkla{\Omega}}^6 + Ch\esssup_{t\in\trkla{0,T}}\norm{\nabla\phi\hx\tpm}_{L^2\trkla{\Omega}}^2\leq Ch\,.
\end{multline}
As the strong convergence of $\phi\hx\tpm$ implies the existence of a subsequence converging pointwise almost everywhere towards $\phi$, we may use the continuity of $F$, the growth condition in \ref{item:potential}, and the dominated convergence theorem to obtain result.
The convergence expressed in \eqref{eq:convEG} can be established analogously.
\end{proof}

Considering test functions $\psi\h,\hat{\psi}\h\in L^2\trkla{0,T;\UhO}$ and $\eta\h\in L^2\trkla{0,T;\UhG}$ in \eqref{eq:modeldisc}, multiplying \eqref{eq:modeldisc:auxr} and \eqref{eq:modeldisc:auxs} by $\zeta\in L^2\trkla{0,T}$, and summing over all time steps, we rewrite \eqref{eq:modeldisc} using the time-interpolant functions:
\begin{subequations}\label{eq:timecont}
\begin{multline}\label{eq:timecont:phi1}
\iOmegaT \Ih{\para{t}\phi\hx\tl\psi\h}\dx\dt +\iOmegaT\nabla\mu\hx\tp\cdot\nabla\psi\h\dx\dt \\
+\beta^{-1}\iGammaT\IhG{\para{t}\phi\hx\tl\psi\h}\dG\dt +\beta^{-1}\iGammaT\nablaG\theta\hx\tp\cdot\nablaG\psi\h\dG\dt=0\,,
\end{multline}
\begin{multline}\label{eq:timecont:phi2}
\tfrac{1}{1+\xi}\iOmegaT\IhG{\para{t}\phi\hx\tl\eta\h}\dG\dt+\tfrac{1}{1+\xi}\iGammaT\nablaG\theta\hx\tp\cdot\nablaG\eta\h\dG\dt\\
+\tfrac{\xi}{1+\xi}\beta\iGammaT\IhG{\trkla{\beta\theta\hx\tp-\mu\hx\tp}\eta\h}\dG\dt=0\,,
\end{multline}
\begin{multline}\label{eq:timecont:pot}
\iOmegaT\Ih{\mu\hx\tp\hat{\psi}\h}\dx\dt+\iGammaT\IhG{\theta\hx\tp\hat{\psi}\h}\dG\dt=\iOmegaT\nabla\phi\hx\tp\cdot\nabla\hat{\psi}\h\dx\dt\\
+\iGammaT\nablaG\phi\hx\tp\cdot\nablaG\hat{\psi}\h\dG\dt +\int_0^T\frac{r\hx\tp}{\sqrt{E\h^{\Omega}\trkla{\phi\hx\tm}}}\iOmega\Ih{F^\prime\trkla{\phi\hx\tm}\hat{\psi}\h}\dx\dt\\
+\int_0^T\frac{s\hx\tp}{\sqrt{E\h^{\Gamma}\trkla{\phi\hx\tm}}}\iGamma\IhG{G^\prime\trkla{\phi\hx\tm}\hat{\psi}\h}\dx\dt
\end{multline}
\begin{align}
\int_0^T \para{t}r\hx\tl\zeta\trkla{t}\dt&=\int_0^T\zeta\trkla{t}\frac{1}{2\sqrt{E\h^\Omega\trkla{\phi\hx\tm}}}\iOmega\Ih{F^\prime\trkla{\phi\hx\tm}\para{t}\phi\hx\tl}\dx\dt\,,\label{eq:timecont:r}\\
\int_0^T \para{t}s\hx\tl\zeta\trkla{t}\dt&=\int_0^T\zeta\trkla{t}\frac{1}{2\sqrt{E\h^\Gamma\trkla{\phi\hx\tm}}}\iGamma\IhG{G^\prime\trkla{\phi\hx\tm}\para{t}\phi\hx\tl}\dG\dt\,.\label{eq:timecont:s}
\end{align}
\end{subequations}
By passing to the limit $\trkla{h,\tau,\xi}\rightarrow\trkla{0,0,\xi_0}$, we obtain the convergence result stated in the following theorem.
\begin{theorem}\label{th:convergence}
Let $d\in\tgkla{2,3}$ and let the assumptions \ref{item:T}, \ref{item:S1}, \ref{item:S2}, \ref{item:potential}, \ref{item:htau1}, and \ref{item:I} hold true.
Then, the triple $\trkla{\phi,\mu,\theta}$ obtained in Lemma \ref{lem:convergence} by passing to the limit $\trkla{h,\tau,\xi}\rightarrow\trkla{0,0,\xi_0}$ solves \eqref{eq:model} in the following weak sense:
\begin{subequations}\label{eq:weaksolution}
\begin{multline}\label{eq:weaksolution:phiO}
\iOmegaT\trkla{\phi_0-\phi}\para{t}\psi\dx\dt+\iOmegaT\nabla\mu\cdot\nabla\psi\dx\dt\\
+\beta^{-1}\iGammaT\trkla{\phi_0-\phi}\para{t}\psi\dx\dt+\beta^{-1}\iGammaT\nablaG\theta\cdot\nablaG\psi\dG\dt=0\,,
\end{multline}
\begin{multline}\label{eq:weaksolution:phiG}
\tfrac{1}{1+\xi_0}\iGammaT\trkla{\phi_0-\phi}\para{t}\eta\dG\dt+\tfrac1{1+\xi_0}\iGammaT\nablaG\theta\cdot\nablaG\eta\dG\dt\\
+\tfrac{\xi_0}{1+\xi_0}\beta\iGammaT\trkla{\beta\theta-\mu}\eta\dG\dt=0\,,
\end{multline}
\begin{multline}\label{eq:weaksolution:potential}
\iOmegaT\mu \hat{\psi}\dx\dt+\iGammaT\theta\hat{\psi}\dG\dt= \iOmegaT\nabla\phi\cdot\nabla\hat{\psi}\dx\dt+\iGammaT\nablaG\phi\cdot\nablaG\hat{\psi}\dG\dt\\
+ \iOmegaT {F}^\prime\trkla{\phi}\hat{\psi}\dx\dt+\iGammaT {G}^\prime\trkla{\phi}\hat{\psi}\dG\dt\,,
\end{multline}
\end{subequations}
for all $\psi\in H^1\trkla{0,T;\mathcal{V}}$ satisfying $\psi\trkla{\cdot,T}\equiv0$, $\eta\in H^1\trkla{0,T;H^1\trkla{\Gamma}}$ satsifying $\eta\trkla{\cdot,T}\equiv0$, and $\hat{\psi}\in L^2\trkla{0,T;\mathcal{V}}$.
\end{theorem}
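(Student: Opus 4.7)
The plan is to start from the time-interpolated reformulation \eqref{eq:timecont} and pass to the limit $\trkla{h,\tau,\xi}\to\trkla{0,0,\xi_0}$ term by term, invoking the convergences catalogued in Lemma~\ref{lem:convergence} and Corollary~\ref{cor:convergenceE}. Given continuous test functions $\psi\in H^1\trkla{0,T;\mathcal{V}}$ with $\psi\trkla{\cdot,T}\equiv0$, $\eta\in H^1\trkla{0,T;H^1\trkla{\Gamma}}$ with $\eta\trkla{\cdot,T}\equiv0$, and $\hat{\psi}\in L^2\trkla{0,T;\mathcal{V}}$, I would first construct Clément-type interpolants $\psi\h,\hat{\psi}\h\in L^2\trkla{0,T;\UhO}$ and $\eta\h\in L^2\trkla{0,T;\UhG}$; the compatibility between $\UhG$ and the traces of $\UhO$ ensures that the $\mathcal{V}$-constraint is preserved after discretisation, and standard interpolation estimates give strong convergence of the surrogates in the spaces dictated by the weak formulation.

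For the phase-field equations \eqref{eq:timecont:phi1}--\eqref{eq:timecont:phi2}, I would shift the discrete time derivative $\para{t}\phi\hx\tl$ onto the test function by integration by parts in time, producing boundary contributions at $t=0$ that converge to $\iOmega\phi_0\psi\trkla{0}\dx$ and its surface analogue thanks to~\ref{item:I}; the remaining integrals pass through the strong convergences \eqref{eq:conv:phiO:strong}--\eqref{eq:conv:phiG:strong} of $\phi\hx\tpm$, yielding the $\trkla{\phi_0-\phi}\para{t}\psi$ terms in \eqref{eq:weaksolution:phiO}--\eqref{eq:weaksolution:phiG}. The gradient contributions involving $\mu\hx\tp$, $\theta\hx\tp$, and $\phi\hx\tp$ pass by the weak convergences \eqref{eq:conv:muO:weak}, \eqref{eq:conv:theta:weak}, \eqref{eq:conv:phiO:weakstar}, and \eqref{eq:conv:phiG:weakstar}. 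The nodal interpolations $\Ihop$, $\IhGop$ sitting around products of discrete functions are removed using Lemma~\ref{lem:Ih} together with the uniform bounds \eqref{eq:regularity:energy} and the scaling~\ref{item:htau1}. For the coupling term in \eqref{eq:timecont:phi2}, I would distinguish the cases $\xi_0\in\tekla{0,\infty}$: for finite $\xi_0$ the prefactor $\tfrac{\xi}{1+\xi}\to\tfrac{\xi_0}{1+\xi_0}$ and the factor $\beta\theta\hx\tp-\mu\hx\tp$ converges weakly in $L^2\trkla{0,T;L^2\trkla{\Gamma}}$, whereas for $\xi_0=\infty$ the jump estimate \eqref{eq:conv:jump} forces $\beta\theta\equiv\mu$ a.e.\ on $\Gamma\times\trkla{0,T}$ and the coupling term collapses.

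The step I expect to be delicate is the nonlinear part of \eqref{eq:timecont:pot}, namely
\[
\frac{r\hx\tp}{\sqrt{E\h^\Omega\trkla{\phi\hx\tm}}}\iOmega\Ih{F^\prime\trkla{\phi\hx\tm}\hat{\psi}\h}\dx\qquad\text{and}\qquad\frac{s\hx\tp}{\sqrt{E\h^\Gamma\trkla{\phi\hx\tm}}}\iGamma\IhG{G^\prime\trkla{\phi\hx\tm}\hat{\psi}\h}\dG\,,
\]
which must limit to $\iOmegaT F^\prime\trkla{\phi}\hat{\psi}\dx\dt$ and $\iGammaT G^\prime\trkla{\phi}\hat{\psi}\dG\dt$, erasing the auxiliary variables as advertised in the abstract. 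The strong convergence of $\phi\hx\tpm$, the growth assumptions in~\ref{item:potential}, and dominated convergence handle the spatial factors; the real task is to show that the scalar ratios converge to $1$ strongly in $L^p\trkla{0,T}$. To this end I would pass to the limit in the SAV update equations \eqref{eq:timecont:r}--\eqref{eq:timecont:s} by Taylor-expanding $E\h^\Omega\trkla{\phi\hx\nn}-E\h^\Omega\trkla{\phi\hx\no}$ around $\phi\hx\no$, using the decomposition of $F,G$ in~\ref{item:potential} and bounding the quadratic remainder by means of \eqref{eq:regularity:energy} and the improved increment estimate \eqref{eq:improvedphi}; this shows that the discrete update for $r\hx\nn$ differs from the exact chain-rule increment of $\sqrt{E\h^\Omega\trkla{\phi\hx\nn}}$ by a quantity vanishing in the limit. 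Combined with the initialisations $r\hx^0=\sqrt{E\h^\Omega\trkla{\phi\hx^0}}$ and $s\hx^0=\sqrt{E\h^\Gamma\trkla{\phi\hx^0}}$ from~\ref{item:I} and with Corollary~\ref{cor:convergenceE}, this identifies the limits as $r\trkla{t}=\sqrt{\iOmega F\trkla{\phi\trkla{t}}\dx}$ and $s\trkla{t}=\sqrt{\iGamma G\trkla{\phi\trkla{t}}\dG}$, so that the two ratios converge to $1$. Inserting the resulting limits into \eqref{eq:timecont} and using density of the discrete test functions yields \eqref{eq:weaksolution}.
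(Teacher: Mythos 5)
Your overall strategy coincides with the paper's: pass to the limit in the time-interpolated system \eqref{eq:timecont} using Lemma \ref{lem:convergence} and Corollary \ref{cor:convergenceE}, the crux being the identification of the SAV ratios with $1$. Where you genuinely deviate is in how that identification is performed. The paper multiplies \eqref{eq:timecont:r} by $\zeta\in C^1\trkla{\tekla{0,T}}$ with $\zeta\trkla{T}=0$ and splits the right-hand side into three pieces (denominator mismatch, mismatch between $F^\prime\trkla{\phi\hx\tm}$ and $F^\prime\trkla{\phi\hx\tl}$, and an exact chain-rule term); the first two are shown to be $O\trkla{\tau^{1/2}}$ and $O\trkla{\tau^{1/4}}$ via \eqref{eq:improvedphi}, and the third is rewritten as $\int_0^T\zeta\,\tfrac{\mathrm{d}}{\mathrm{d}t}\sqrt{E\h^\Omega\trkla{\phi\hx\tl}}\dt$, integrated by parts in time, and passed to the limit with Corollary \ref{cor:convergenceE}. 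This yields only the weak identity $\int_0^T\para{t}\zeta\, r\dt=\int_0^T\para{t}\zeta\sqrt{\iOmega F\trkla{\phi}\dx}\dt$, which is then used to substitute $r$ inside the limit of \eqref{eq:timecont:pot} by interpreting the $L^2\trkla{0,T}$ factor multiplying $r$ as $\para{t}\zeta$ for a suitable $\zeta\in H^1\trkla{0,T}$. Your telescoping variant --- comparing the discrete update of $r\hx\nn$ with the exact increment of $\sqrt{E\h^\Omega\trkla{\phi\hx\nn}}$ and summing the Taylor remainders --- rests on exactly the same ingredients (\eqref{eq:improvedphi}, the uniform $H^1$ bounds, the decomposition and growth conditions in \ref{item:potential}, the initialisation \ref{item:I}) and in fact delivers the slightly stronger conclusion $\max_n\tabs{r\hx\nn-\sqrt{E\h^\Omega\trkla{\phi\hx\nn}}}\rightarrow0$, hence strong $L^p\trkla{0,T}$ convergence of $r\hx\tp$ to $\sqrt{\iOmega F\trkla{\phi}\dx}$, so the substitution step becomes unnecessary. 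Both routes work; the paper's stays closer to the formal chain-rule derivation \eqref{eq:evolution:aux}, yours makes the removal of the auxiliary variable more explicit.

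Two technical points need repair. First, you test with Cl\'ement-type interpolants of $\psi\in H^1\trkla{0,T;\mathcal{V}}$ and assert that the $\mathcal{V}$-constraint "ensures" compatibility; but a quasi-interpolant controlled only in $H^1\trkla{\Omega}$ gives trace convergence merely in $H^{1/2}\trkla{\Gamma}$, which is insufficient for the surface-gradient terms, and the paper explicitly flags the absence of a $\mathcal{V}$-stable projection onto $\UhO\times\UhG$ (see the discussion preceding Lemma \ref{lem:nikolskii:phi}). The paper instead tests with nodal interpolants of smooth functions, for which $\Ih{\psi}\big\vert_\Gamma=\IhG{\psi\big\vert_\Gamma}$ and both bulk and surface $H^1$ convergences hold, and concludes by density; you should do the same. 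Second, when erasing the nodal interpolation around $F^\prime\trkla{\phi\hx\tm}\hat{\psi}\h$ in \eqref{eq:timecont:pot}, Lemma \ref{lem:Ih} alone does not apply because $F^\prime\trkla{\phi\hx\tm}$ is not a finite element function; the paper needs the splitting $F=F_1+F_2$ from \ref{item:potential} precisely here (elementwise estimate for the Lipschitz part, growth of $F_2^{\prime\prime}$ for the rest, cf.\ \eqref{eq:convergence:Fprime}). You invoke that decomposition only for the Taylor remainder, so make sure it also enters at this stage.
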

\begin{proof}
For the convergence of \eqref{eq:timecont:phi1} and \eqref{eq:timecont:phi2} towards \eqref{eq:weaksolution:phiO} and \eqref{eq:weaksolution:phiG}, we refer the reader to the proof of Theorem 5.8 in \cite{KnopfLamLiuMetzger2021}, where the convergence was established by using $\Ih{\psi}$ with $\psi\in C^1\trkla{\tekla{0,T};C^\infty\trkla{\overline{\Omega}}}$ satisfying $\psi\trkla{\cdot,T}\equiv0$ and $\IhG{\eta}$ with $\eta\in C^1\trkla{\tekla{0,T};C^\infty\trkla{\Gamma}}$ satisfying $\eta\trkla{\cdot,T}\equiv0$ as test functions and applying assumption \ref{item:htau1}.\\

It remains to show \eqref{eq:weaksolution:potential}.
We shall start by passing to the limit in the evolution equations for $r\hx\tl$ and $s\hx\tl$, i.e.~in \eqref{eq:timecont:r} and \eqref{eq:timecont:s}.
This is intricate due to the lack of results concerning the convergence properties of $\para{t}\phi\hx\tl$.
Hence, we rely on the reconstruction of the chain-rule used in \eqref{eq:evolution:aux} for the formal derivation of the evolution equations, which allows us to integrate by parts and shift the time derivative onto the test function $\zeta$.
Choosing $\zeta\in C^1\trkla{\tekla{0,T}}$ with $\zeta\trkla{T}=0$, we split the right-hand side of \eqref{eq:timecont:r} into
\begin{multline}\label{eq:split:r}
\int_0^T\zeta\trkla{t} \rkla{\frac1{2\sqrt{E\h^\Omega\trkla{\phi\hx\tm}}}-\frac1{2\sqrt{E\h^\Omega\trkla{\phi\hx\tl}}}}\iOmega\Ih{F^\prime\trkla{\phi\hx\tm}\para{t}\phi\hx\tl}\dx\dt\\
+\int_0^T\zeta\trkla{t}\frac{1}{2\sqrt{E\h^\Omega\trkla{\phi\hx\tl}}}\iOmega\Ih{\trkla{F^\prime\trkla{\phi\hx\tm}-F^\prime\trkla{\phi\hx\tl}}\para{t}\phi\hx\tl}\dx\dt\\
+\int_0^T\zeta\trkla{t}\frac{1}{2\sqrt{E\h^\Omega\trkla{\phi\hx\tl}}}\iOmega\Ih{F^\prime\trkla{\phi\hx\tl}\para{t}\phi\hx\tl}\dx\dt\\
=:D_1+D_2+D_3\,.
\end{multline}
Recalling \ref{item:potential}, \eqref{eq:tmp:EO}, Hölder's inequality, and the norm equivalence \eqref{eq:normequivalence}, we derive the estimate
\begin{align}
\begin{split}
\tabs{D_1}
\leq &\,C\trkla{\gamma}\int_0^T\abs{E\hx^\Omega\trkla{\phi\hx\tl}-E\h^\Omega\trkla{\phi\hx\tm}}\norm{F^\prime\trkla{\phi\hx\tl}}_{L^2\trkla{\Omega}}\tau^{-1}\norm{\phi\hx\tp-\phi\hx\tm}_{L^2\trkla{\Omega}}\dt\,.
\end{split}
\end{align}
Recalling the growth condition on $F^\prime$ (cf.~\ref{item:potential}), we obtain
\begin{multline}
\abs{E\h^\Omega\trkla{\phi\hx\tl}-E\h^\Omega\trkla{\phi\hx\tm}}= \abs{\iOmega\Ih{F\trkla{\phi\hx\tl}-F\trkla{\phi\hx\tm}}\dx}\\
=\abs{ \iOmega\Ih{F^\prime\trkla{\varphi}\cdot\trkla{\phi\hx\tl-\phi\hx\tm}}\dx} \leq C\norm{\Ih{F^\prime\trkla{\varphi}}}_{L^2\trkla{\Omega}}\norm{\phi\hx\tl-\phi\hx\tm}_{L^2\trkla{\Omega}}\\
\leq C\rkla{\norm{\phi\hx\tl}_{L^6\trkla{\Omega}}^3+\norm{\phi\hx\tm}_{L^6\trkla{\Omega}}^3+1}\norm{\phi\hx\tp-\phi\hx\tl}_{L^2\trkla{\Omega}}\,,
\end{multline}
where $\varphi\trkla{\bs{x}_k}$ is in the convex hull of $\phi\hx\tl\trkla{\bs{x}_k}$ and $\phi\hx\tm\trkla{\bs{x}_k}$ on each vertex $\bs{x}_k$ ($k\in\tgkla{1,\ldots,\dim\UhO}$).
Combining the growth condition on $F^\prime$ with the regularity results collected in \eqref{eq:regularity:energy}, we conclude
\begin{align}
\begin{split}
\tabs{D_1}\leq&\,C\trkla{\gamma}\int_0^T\rkla{\norm{\phi\hx\tl}_{H^1\trkla{\Omega}}^3 +\norm{\phi\hx\tm}_{H^1\trkla{\Omega}}^3+1}^2\tau^{-1}\norm{\phi\hx\tp-\phi\hx\tm}_{L^2\trkla{\Omega}}^2\dt\\
\leq&\,C\trkla{\gamma}\tau^{1/2}\rkla{\tau^{-3/2}\norm{\phi\hx\tp-\phi\hx\tm}_{L^2\trkla{0,T;L^2\trkla{\Omega}}}^2}\leq C\trkla{\gamma}\tau^{1/2}\,.
\end{split}
\end{align}
To deal with the second integral in \eqref{eq:split:r}, we use \ref{item:potential}, Hölder's inequality, the norm equivalence \eqref{eq:normequivalence}, the regularity results stated in \eqref{eq:regularity:energy}, and Young's inequality to compute 
\begin{align}\label{eq:needC2}
\begin{split}
\tabs{D_2}\leq&\,C\trkla{\gamma}\int_0^T\norm{\Ih{F^\prime\trkla{\phi\h\tm}-F^\prime\trkla{\phi\h\tl}}}_{L^2\trkla{\Omega}}\tau^{-1}\norm{\phi\hx\tp-\phi\hx\tm}_{L^2\trkla{\Omega}}\dt\\
\leq&\,C\trkla{\gamma}\int_0^T\norm{\Ih{\rkla{\abs{\phi\hx\tl}^2+\abs{\phi\hx\tm}^2+1}\cdot\abs{\phi\hx\tl-\phi\hx\tm}}}_{L^2\trkla{\Omega}}\tau^{-1}\norm{\phi\hx\tp-\phi\hx\tm}_{L^2\trkla{\Omega}}\dt\\
\leq&\,C\trkla{\gamma}\int_0^T\rkla{\norm{\phi\hx\tl}_{L^6\trkla{\Omega}}^2+\norm{\phi\hx\tm}_{L^6\trkla{\Omega}}^2+1}\norm{\phi\hx\tl-\phi\hx\tm}_{L^6\trkla{\Omega}}\tau^{-1}\norm{\phi\hx\tp-\phi\hx\tm}_{L^2\trkla{\Omega}}\dt\\
\leq& C\trkla{\gamma}\int_0^T \tau^{1/4}\rkla{\tau^{-1}\norm{\phi\hx\tl-\phi\hx\tm}_{H^1\trkla{\Omega}}^2+\tau^{-3/2}\norm{\phi\hx\tp-\phi\hx\tm}_{L^2\trkla{\Omega}}^2}\dt\leq C\trkla{\gamma} \tau^{1/4}\,.
\end{split}
\end{align}
Therefore, $\tabs{D_2}$ vanishes for $\tau\searrow0$.
For the remaining term $D_3$, we can use the chain rule to obtain
\begin{align}
\begin{split}
D_3=&\,\int_0^T\zeta\trkla{t}\frac{1}{2\sqrt{E\h^\Omega\trkla{\phi\hx\tl}}}\frac{\mathrm{d}}{\dt}\iOmega\Ih{F\trkla{\phi\hx\tl}}\dx\dt=\,\int_0^T\zeta\trkla{t}\frac{\mathrm{d}}{\mathrm{d}t} \sqrt{E\h^\Omega\trkla{\phi\hx\tl}}\dt\\
=&-\int_0^T\para{t}\zeta\trkla{t} \sqrt{E\h^\Omega\trkla{\phi\hx\tl}}\dt - \zeta\trkla{0}\sqrt{E\h^\Omega\trkla{\phi\hx\tl}\big\vert_{t=0}}\\
&\rightarrow -\int_0^T\para{t}\zeta\trkla{t} \sqrt{\iOmega F\trkla{\phi}\dx}\dt-\zeta\trkla{0}\sqrt{\iOmega F\trkla{\phi_0}\dx}\,,
\end{split}
\end{align}
due to Corollary \ref{cor:convergenceE} and \ref{item:I}.
Integrating by parts with respect to time and recalling the definition of the initial conditions for $r\hx\tl$ defined in \ref{item:I}, we obtain from \eqref{eq:conv:r:weakstar}
\begin{align}\label{eq:weaksolution:r}
\int_0^T\para{t}\zeta\trkla{t} r\trkla{t}\dt=\int_0^T\para{t}\zeta\trkla{t}\sqrt{\iOmega F\trkla{\phi}\dx}\dt&&\text{with}~r\trkla{0}=\sqrt{\iOmega F\trkla{\phi_0}\dx}
\end{align}
for all $\zeta\in C^1\trkla{\tekla{0,T}}$ satisfying $\zeta\trkla{T}=0$.
Employing the density of $C^1\trkla{\tekla{0,T}}$ in $H^1\trkla{0,T}$, allows us to generalize this result to $\zeta\in H^1\trkla{0,T}$ satisfying $\zeta\trkla{T}=0$.\\
Analogous arguments allow us to pass to the limit in \eqref{eq:timecont:s} which provides
\begin{align}\label{eq:weaksolution:s}
\int_0^T\para{t}\zeta\trkla{t} s\trkla{t}\dt=\int_0^T\para{t}\zeta\trkla{t}\sqrt{\iGamma G\trkla{\phi}\dG}\dt&&\text{with}~s\trkla{0}=\sqrt{\iGamma G\trkla{\phi_0}\dG}
\end{align}
for all $\zeta\in H^1\trkla{0,T}$ satisfying $\zeta\trkla{T}=0$.\\

To pass to the limit in \eqref{eq:timecont:pot}, we choose $\hat{\psi}\h:=\Ih{\hat{\psi}}$ with $\hat{\psi}\in L^2\trkla{0,T;C^\infty\trkla{\overline{\Omega}}}$.
This, in particular, allows us to interchange the interpolation and the trace operator, i.e.~$\Ih{\hat{\psi}}\big\vert_{\Gamma_T}=\IhG{\hat{\psi}\big\vert_{\Gamma_T}}$.
Then, except for the last two terms the right-hand side, the convergence is a direct consequence of the convergence properties stated in Lemma \ref{lem:convergence}.
Looking at the penultimate term on the right-hand side, we use that
\begin{align}\label{eq:tmp:EO}
\abs{\frac{1}{\sqrt{E\h^\Omega\trkla{\phi\hx\tpm}}}-\frac{1}{\sqrt{\iOmega F\trkla{\phi}\dx}}}\leq  C\trkla{\gamma}\abs{E\h^\Omega\trkla{\phi\hx\tpm}-\iOmega F\trkla{\phi}\dx}\,,
\end{align}
i.e.~we obtain strong convergence in $L^p\trkla{0,T}$ for $p<\infty$ from Corollary \ref{cor:convergenceE}.
In order to pass to the limit in the remaining spatial integral, we mimic the proof of Corollary \ref{cor:convergenceE} and start by showing that the interpolation operator can be neglected.
On each $K\in\Th$ we obtain for the Lipschitz continuous part $F_1^\prime$
\begin{align}
\begin{split}
\int_K \abs{\trkla{1-\Ihop}\gkla{F_1^\prime\trkla{\phi\hx\tm}}}\dx \leq&\, \tabs{K}\norm{F_1^\prime\trkla{\phi\hx\tm}-\Ih{F_1^\prime\trkla{\phi\hx\tm}}}_{L^\infty\trkla{K}}\\
=&\,\tabs{K}\norm{F_1^\prime\trkla{\phi\hx\tm}-\sum_{i=0}^d \lambda_i F_1^\prime\trkla{\phi\hx\tm\trkla{\bs{x}_i}}}_{L^\infty\trkla{K}}\\
\leq&\, C\tabs{K} h\tabs{\nabla\phi\hx\tm}\leq C h\rkla{\norm{\nabla\phi\hx\tm}_{L^2\trkla{K}}^2+\tabs{K}}\,,
\end{split}
\end{align}
where $\tgkla{\bs{x}_i}_{i=0,\ldots,d}$ are the vertices of $K$ and $\tgkla{\lambda_i}_{i=0,\ldots,d}$ are the corresponding barycentric coordinates.
As the continuously differentiable part $F_2^\prime$ can be treated by arguments similar to the ones used in the proof of Corollary \ref{cor:convergenceE}, we may sum over all $K\in\Th$ and obtain for $\alpha\in[1,2)$
\begin{align}\label{eq:convergence:Fprime}
\begin{split}
&\norm{\iOmega\abs{\Ih{F^\prime\trkla{\phi\hx\tm}\hat{\psi}\h}- F^\prime\trkla{\phi}\hat{\psi}}\dx}_{L^\alpha\trkla{0,T}}\\
&\quad\leq \norm{\iOmega \abs{F^\prime\trkla{\phi\hx\tm}\hat{\psi}\h- F^\prime\trkla{\phi}\hat{\psi}}\dx}_{L^\alpha\trkla{0,T}} +\norm{\iOmega\abs{\trkla{1-\Ihop}\gkla{F^\prime\trkla{\phi\hx\tm}}\hat{\psi}\h}\dx}_{L^\alpha\trkla{0,T}} \\
&\qquad + \norm{\iOmega\abs{\trkla{1-\Ihop}\gkla{\Ih{F^\prime\trkla{\phi\hx\tm}}\hat{\psi}\h}}\dx}_{L^\alpha\trkla{0,T}}\\
&\quad\leq \norm{\iOmega \abs{F^\prime\trkla{\phi\hx\tm}\hat{\psi}\h- F^\prime\trkla{\phi}\hat{\psi}}\dx}_{L^\alpha\trkla{0,T}} +Ch
\end{split}
\end{align}
by applying Lemma \ref{lem:Ih}, an inverse estimate, \eqref{eq:regularity:energy:a}, and the stability of the interpolation operator.
As we have $\hat{\psi}\h\rightarrow\hat{\psi}$ in $L^2\trkla{0,T;L^\infty\trkla{\Omega}}$ and $F^\prime\trkla{\phi\hx\tm}\rightarrow F^\prime\trkla{\phi}$ in $L^p\trkla{0,T;L^1\trkla{\Omega}}$ ($p<\infty$), the right-hand side of \eqref{eq:convergence:Fprime} vanishes.
Therefore, recalling \eqref{eq:conv:r:weakstar}, we are able to pass to the limit in the penultimate term in \eqref{eq:timecont:pot}.
The treatment of the last term on the right-hand side of \eqref{eq:timecont:pot} follows by similar arguments, which provides
\begin{align}
\begin{split}
\iOmegaT&\mu\hat{\psi}\dx\dG+\iGammaT\theta\hat{\psi}\dG\dt= \iOmegaT\nabla\phi\cdot\nabla\hat{\psi}\dx\dt+\iGammaT\nablaG\phi\cdot\nablaG\hat{\psi}\dG\dt\\
&+\int_0^T\frac{r}{\sqrt{\iOmega F\trkla{\phi}\dx}}\iOmega F^\prime\trkla{\phi}\hat{\psi}\dx\dt+\int_0^T\frac{s}{\sqrt{\iGamma G\trkla{\phi}\dG}}\iGamma G^\prime\trkla{\phi}\hat{\psi}\dG\dt
\end{split}
\end{align}
for $\hat{\psi}\in L^2\trkla{0,T;C^\infty\trkla{\overline{\Omega}}}$.
Noting that the scalar auxiliary variables $r$ and $s$ are multiplied by sufficiently regular functions $\rkla{\iOmega F\trkla{\phi}\dx}^{-1/2}\iOmega F^\prime\trkla{\phi}\hat{\psi}\dx\in L^2\trkla{0,T}$ and $\rkla{\iGamma G\trkla{\phi}\dG}^{-1/2}\iGamma G^\prime\trkla{\phi}\hat{\psi}\dG\in L^2\trkla{0,T}$, which we may interpret as time derivatives of suitable $H^1\trkla{0,T}$ functions, we use \eqref{eq:weaksolution:r} and \eqref{eq:weaksolution:s} to substitute $r$ and $s$.
Finally, a density argument provides \eqref{eq:weaksolution:potential}.
\end{proof}

\begin{remark}\label{rem:uniqueness}
The weak solutions obtained in Theorem \ref{th:convergence} have the same regularity as the ones obtained in Theorem 5.8 in \cite{KnopfLamLiuMetzger2021}. 
In Therefore, it is also possible to show that $\para{t}\phi$ belongs to a suitable dual space, which reduces the requirements on the test function further, and that the solution obtained in Theorem \ref{th:convergence} by passing to the limit $\trkla{h,\tau,\xi}\rightarrow\trkla{0,0,\xi_0}$ is unique (cf.~Corollary 5.9 in \cite{KnopfLamLiuMetzger2021}).
\end{remark}

\section{Numerical simulations}\label{sec:numerics}
In this section, we demonstrate the practicality of the proposed SAV scheme \eqref{eq:modeldisc} by revisiting test cases discussed in \cite{Metzger2021a} and \cite{KnopfLamLiuMetzger2021} for algorithms based on convex-concave splittings and comparing the results.
For this reason, we adapted said algorithm to the proposed linear scheme and implemented it in the \texttt{C++} framework EconDrop (cf.~\cite{Campillo2012, Grun2013c, GrunGuillenMetzger2016, Metzger_2018b, Metzger2018, Metzger2021a}).
\subsection{Phase separation}\label{subsec:separation}
We start by simulating a phase separation process in the case of vanishing adsorption rates, i.e.~$\xi=0$ (cf.~Scenario 2 in \cite{Metzger2021a}).
In this scenario, we consider the domain $\Omega:=\trkla{0,1}^2$ and the initial configuration
\begin{align}
\phi_0\,:\,\mathds{R}^2\supset\Omega\rightarrow\tekla{-1,+1}\quad\trkla{x_1,x_2}\mapsto {\max\gkla{0.1\sin\trkla{\pi x_1},0.1\sin\trkla{\pi x_2}}}\,.
\end{align}
As this initial configuration is unstable, the two phases will separate into different areas where $\phi$ attains values close to $\pm1$.
This evolution is depicted in Fig.~\ref{fig:separation}.
Thereby, the light colour represents the phase $\phi=+1$ and the dark colour represents the phase $\phi=-1$. 
In order to visualize the initial condition, Fig.~\ref{fig:separation:a} is rescaled such that the dark colour represents $\phi=0$ and the light colour corresponds to $\phi=0.1$.
For the double-well potentials $F$ and $G$, we use the shifted polynomial double-well potential $\widetilde{W}_{\operatorname{pol}}\trkla{\phi}:=\tfrac14\trkla{1-\phi^2}^2+c_0$ with an additive constant $c_0$ which is chosen as $0.01/\tabs{\Omega}$ for $F$ and $0.01/\tabs{\Gamma}$ for $G$.
The remaining parameters can be found in Table \ref{tab:param1}.\\
\ifnum\value{IMASTYLE}>1 {
\begin{table}[]
\tblcaption{Parameters used in Section \ref{subsec:separation}.}
{%
\begin{tabular}{@{}cccccccc@{}}
\tblhead{$\mobO$&$\varepsilon$& $\sigma$& $\mobG$&$\delta$& $\beta$&$\xi$&$T$}
0.01&0.02&2&0.02&0.02&1&0&1.0
\lastline
\end{tabular}
}
\label{tab:param1}
\end{table}
} \else {
\begin{table}
\center
\begin{tabular}{c|c|c|c|c|c|c|c}
$\mobO$&$\varepsilon$& $\sigma$& $\mobG$&$\delta$& $\beta$&$\xi$&$T$\\
0.01&0.02&2&0.02&0.02&1&0&1.0
\end{tabular}
\caption{Parameters used in Section \ref{subsec:separation}.}
\label{tab:param1}
\end{table}
}\fi
\begin{figure}
\begin{center}
\newcommand{\scale}{.23\textwidth}
\subfloat[][$t=0$ (rescaled)]{
\includegraphics[width=\scale]{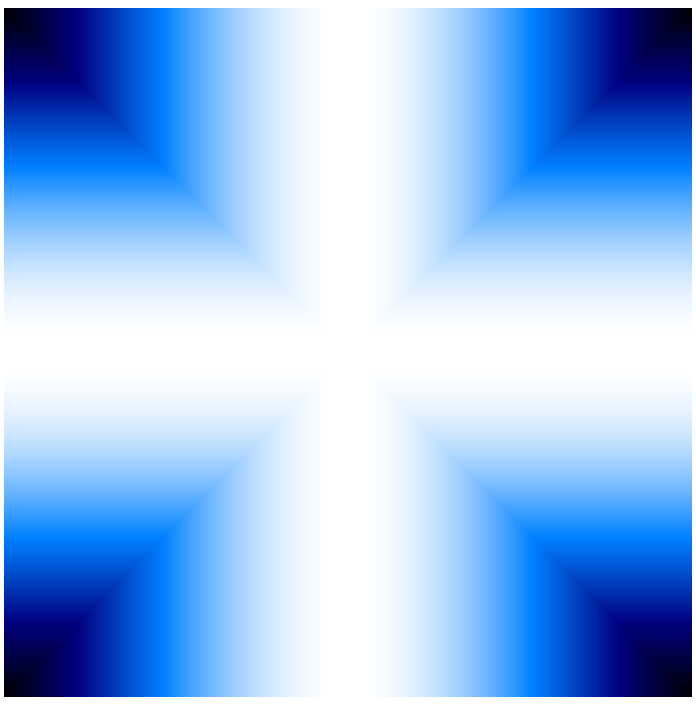}
\label{fig:separation:a}
}\hfill
\subfloat[][$t=0.01$]{
\includegraphics[width=\scale]{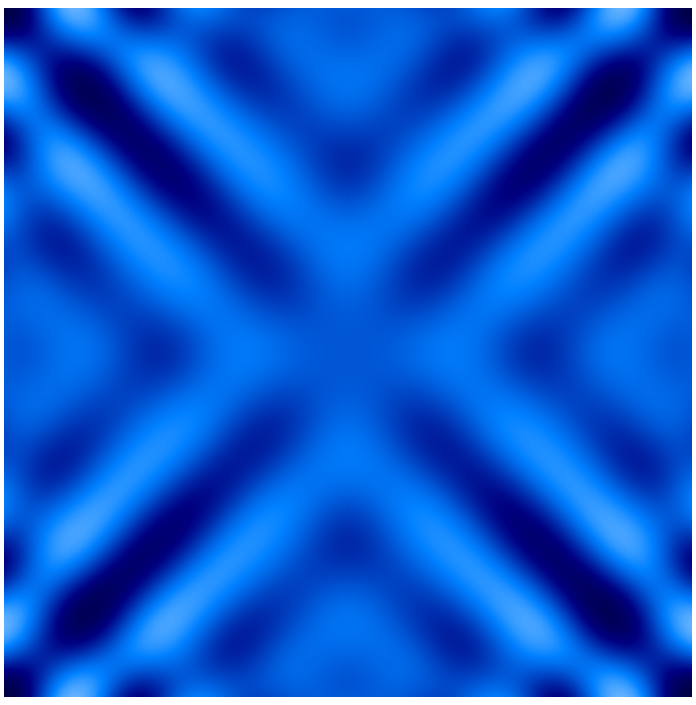}
}\hfill
\subfloat[][$t=0.03$]{
\includegraphics[width=\scale]{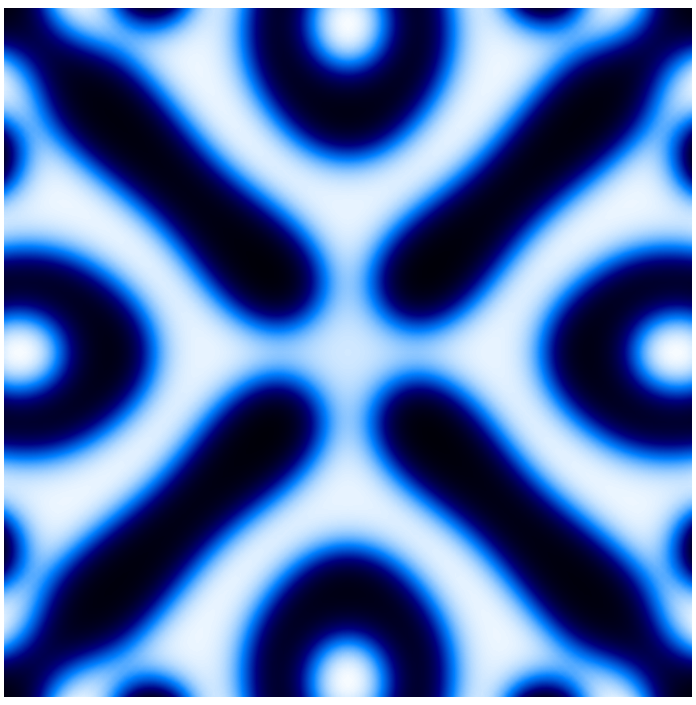}
}\hfill
\subfloat[][$t=0.06$]{
\includegraphics[width=\scale]{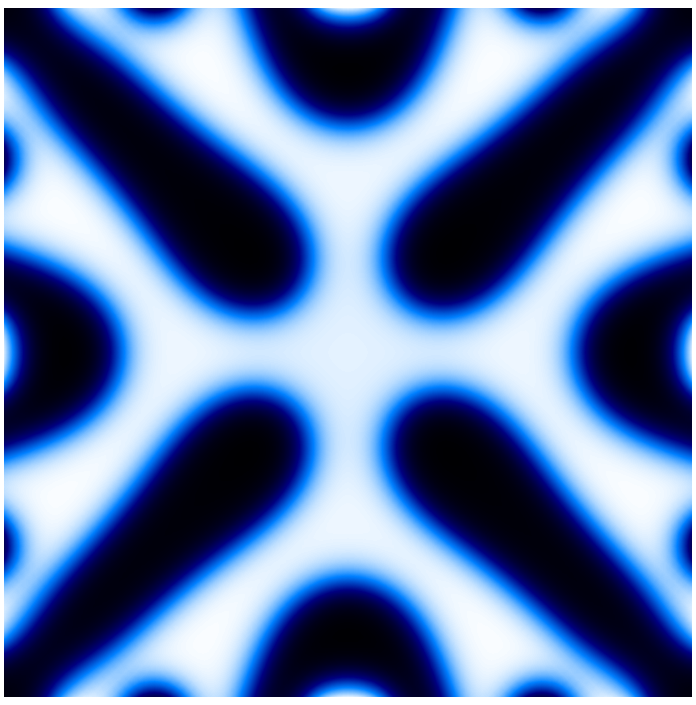}
}\\
\subfloat[][$t=0.12$]{
\includegraphics[width=\scale]{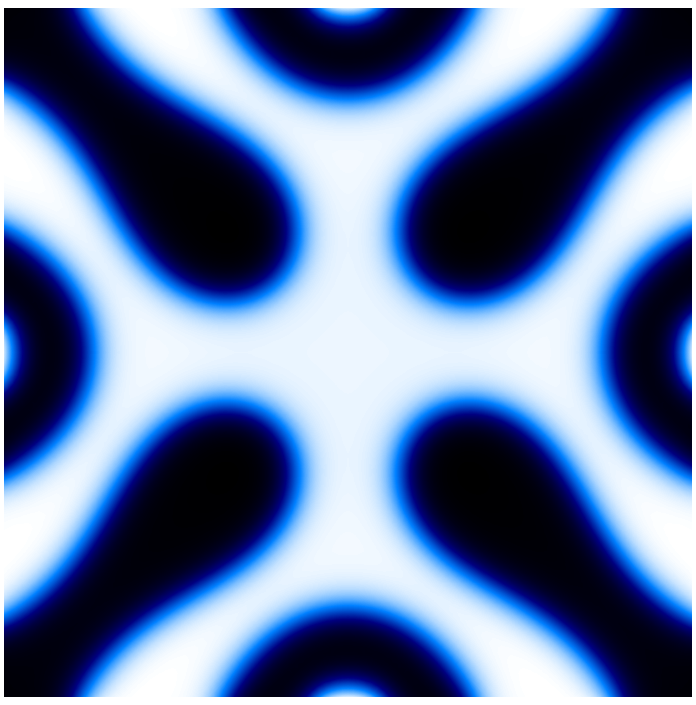}
}\hfill
\subfloat[][$t=0.3$]{
\includegraphics[width=\scale]{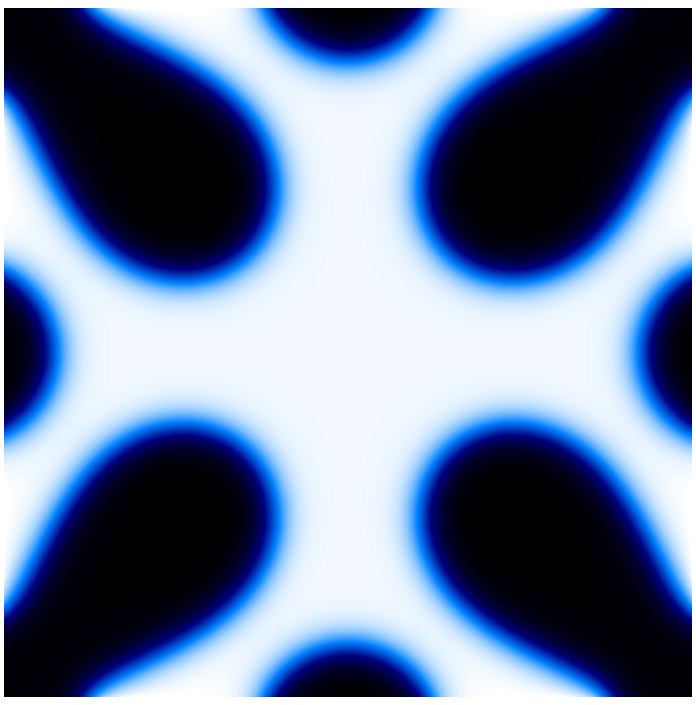}
}\hfill
\subfloat[][$t=0.6$]{
\includegraphics[width=\scale]{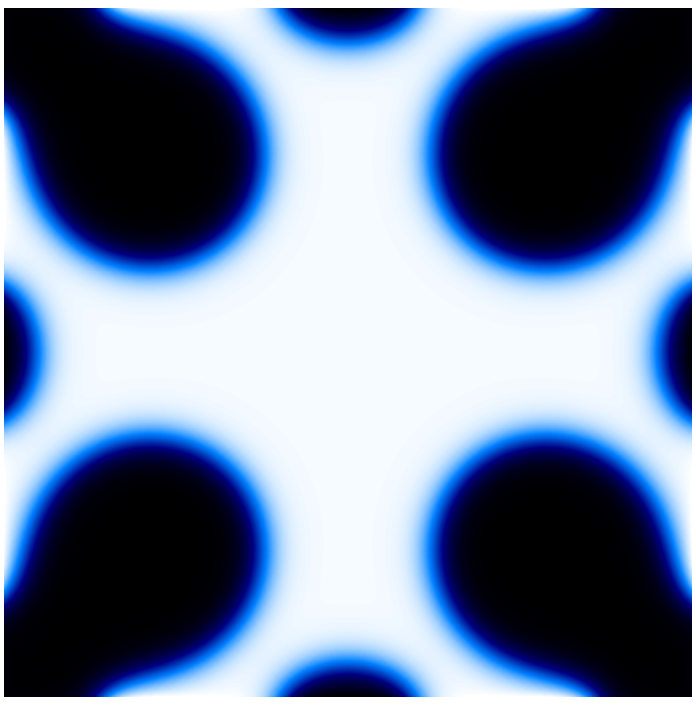}
}\hfill
\subfloat[][$t=1$]{
\includegraphics[width=\scale]{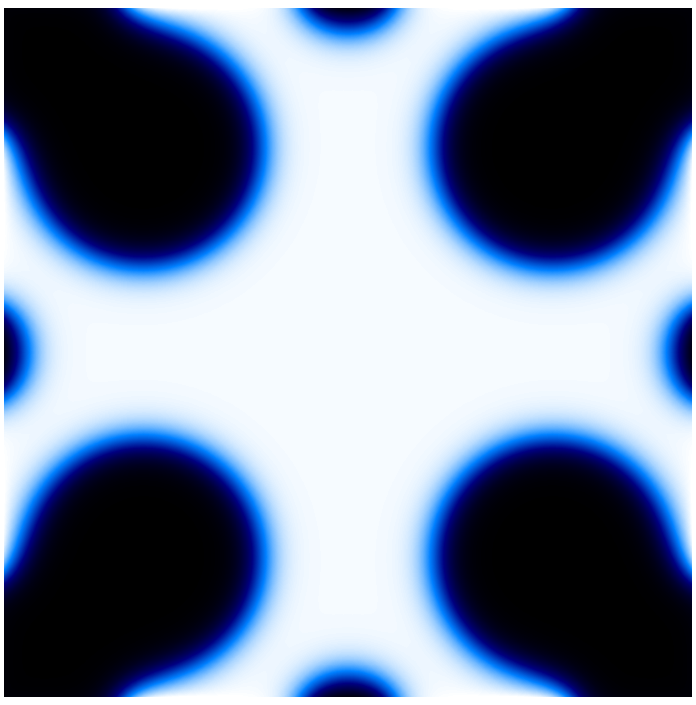}
}
\caption{Visualization of the phase-separation process investigated in Sec.~\ref{subsec:separation} using $h=\sqrt{2}\cdot 2^{-7}$ and $\tau=2\cdot10^{-5}$.}
\label{fig:separation}
\end{center}
\end{figure}
To compute experimental orders of convergence w.r.t.~$h$ and $\tau$, we conduct this simulation with $\tau=k\cdot10^{-5}$, $k\in\tgkla{1,2,4}$, and $h=\sqrt{2}\cdot 2^{-l}$, $l\in\tgkla{6,7,8}$.
The considered mesh sizes correspond to $\dim\UhO\in\tgkla{4225,\,16641,\,66049}$ and $\dim\UhG\in\tgkla{256,\,512,\,1024}$.
Fixing $\tau=2\cdot 10^{-5}$, we treat the solution computed on the finest mesh ($h=\sqrt{2}\cdot 2^{-8}$) as reference solution.
We then use the $L^2\trkla{0,T;L^2\trkla{\Omega}}$-norm (or the $L^2\trkla{0,T;L^2\trkla{\Gamma}}$-norm, respectively) of the deviations of the solutions computed on the coarser meshes from the reference solution to obtain experimental orders of convergence (EOC) w.r.t.~to $h$.
Thereby, the integration in time is approximated by a trapezoidal rule with step size $2\cdot10^{-4}$.
As shown in Tab.~\ref{tab:eoc:h}, we obtain order $2.28$ for the convergence of $\phi\hx\tl$ on $\Omega$ and $1.13$ for the convergence of $\phi\hx\tl\big\vert_\Gamma$ on $\Gamma$.
We want to point out that these results are identical to the ones obtained in \cite{Metzger2021a} for a convex-concave splitting scheme.\\
In a similar manner, we obtain experimental orders of convergence w.r.t.~$\tau$ for fixed $h=\sqrt{2}\cdot 2^{-7}$.
As depicted in Tab.~\ref{tab:eoc:tau}, we obtain order $1.59$ in $\Omega$ and order $1.58$ on $\Gamma$.
Again, these results match with the ones obtained in \cite{Metzger2021a}.
\ifnum\value{IMASTYLE}>1 {
\begin{table}[]
\tblcaption{Experimental order of convergence of $\phi$ and $\phi\big\vert_\Gamma$ w.r.t.~$h$.}
{%
\begin{tabular}{@{}ccccccc@{}}
\tblhead{$h$&\qquad$\phi$\qquad&$\norm{\cdot}_{L^2\trkla{0,T;L^2\trkla{\Omega}}}$ & $\text{EOC}_h^\Omega$ &\qquad$\phi\big\vert_\Gamma$\qquad& $\norm{\cdot}_{L^2\trkla{0,T;L^2\trkla{\Gamma}}}$ & $\text{EOC}_h^\Gamma$}
$\sqrt{2}\cdot2^{-7}$& & 6.28E-03&-& & 8.33E-02&-\\
$\sqrt{2}\cdot2^{-6}$& & 3.06E-02& 2.28& & 1.82E-01& 1.13
\lastline
\end{tabular}
}
\label{tab:eoc:h}
\end{table}
} \else {

\begin{table}
\begin{center}
\subfloat[][EOC w.r.t.~$h$ on $\Omega$.]{
\begin{tabular}{c|cc}
\diagbox{$h$}{$\phi\phantom{\big\vert_\Gamma}$} & $\norm{\cdot}_{L^2\trkla{0,T;L^2\trkla{\Omega}}}$ & $\text{EOC}_h^\Omega$\\\hline
$\sqrt{2}\cdot2^{-7}$ & 6.28E-03&-\\
$\sqrt{2}\cdot2^{-6}$ & 3.06E-02& 2.28
\end{tabular}}\hfill

\subfloat[][EOC w.r.t.~$h$ on $\Gamma$.]{
\begin{tabular}{c|cc}
\diagbox{$h$}{$\phi\big\vert_\Gamma$} & $\norm{\cdot}_{L^2\trkla{0,T;L^2\trkla{\Gamma}}}$ & $\text{EOC}_h^\Gamma$\\\hline
$\sqrt{2}\cdot2^{-7}$ & 8.33E-02&-\\
$\sqrt{2}\cdot2^{-6}$ & 1.82E-01& 1.13
\end{tabular}}
\caption{Experimental order of convergence w.r.t.~$h$.}
\label{tab:eoc:h}
\end{center}
\end{table}
}\fi

\ifnum\value{IMASTYLE}>1 {
\begin{table}[]
\tblcaption{Experimental order of convergence of $\phi$ and $\phi\big\vert_\Gamma$ w.r.t.~$\tau$.}
{%
\begin{tabular}{@{}ccccccc@{}}
\tblhead{$\tau$&\qquad$\phi$\qquad&$\norm{\cdot}_{L^2\trkla{0,T;L^2\trkla{\Omega}}}$ & $\text{EOC}_\tau^\Omega$ &\qquad$\phi\big\vert_\Gamma$\qquad& $\norm{\cdot}_{L^2\trkla{0,T;L^2\trkla{\Gamma}}}$ & $\text{EOC}_\tau^\Gamma$}
$2\cdot10^{-5}$& & 4.79E-03&-& & 2.48E-02&-\\
$4\cdot10^{-5}$& & 1.44E-02& 1.59& & 7.38E-02 & 1.58
\lastline
\end{tabular}
}
\label{tab:eoc:tau}
\end{table}
} \else {
\begin{table}
\begin{center}
\subfloat[][EOC w.r.t.~$\tau$ on $\Omega$.]{
\begin{tabular}{c|cc}
\diagbox{$\tau$}{$\phi\phantom{\big\vert_\Gamma}$} & $\norm{\cdot}_{L^2\trkla{0,T;L^2\trkla{\Omega}}}$ & $\text{EOC}_\tau^\Omega$\\\hline
$2\cdot10^{-5}$ & 4.79E-03&-\\
$4\cdot10^{-5}$ & 1.44E-02& 1.59
\end{tabular}}\hfill
\subfloat[][EOC w.r.t.~$\tau$ on $\Gamma$.]{
\begin{tabular}{c|cc}
\diagbox{$\tau$}{$\phi\big\vert_\Gamma$} & $\norm{\cdot}_{L^2\trkla{0,T;L^2\trkla{\Gamma}}}$ & $\text{EOC}_\tau^\Gamma$\\\hline
$2\cdot10^{-5}$ & 2.48E-02&-\\
$4\cdot10^{-5}$ & 7.38E-02 & 1.58
\end{tabular}}
\caption{Experimental order of convergence w.r.t.~$\tau$.}
\label{tab:eoc:tau}
\end{center}
\end{table}
}\fi
In order to compare the results obtained from the linear SAV scheme \eqref{eq:modeldisc} with the ones obtained via the non-linear convex-concave splitting scheme used in \cite{Metzger2021a}, we plotted for $h=\sqrt{2}\cdot 2^{-7}$ and $\tau=2\cdot10^{-5}$ the modified energy
\begin{subequations}
\begin{align}
\widetilde{\mathcal{E}\h}\trkla{\phi\hx\tp,r\hx\tp,s\hx\tp}:=\tfrac12\varepsilon\sigma\iOmega\tabs{\nabla\phi\hx\tp}^2\dx+\varepsilon^{-1}\sigma\tabs{r\hx\tp}^2+\tfrac12\delta\iGamma\tabs{\nablaG\phi\hx\tp}^2\dG+\delta^{-1}\tabs{s\hx\tp}^2
\end{align}
(cf.~Rem.~\ref{rem:energy}) and the original discrete energy
\begin{multline}
\mathcal{E}\h\trkla{\phi\hx\tp}:=\tfrac12\varepsilon\sigma\iOmega\tabs{\nabla\phi\hx\tp}^2\dx+\varepsilon^{-1}\sigma\iOmega\Ih{F\trkla{\phi\hx\tp}}\dx +\tfrac12\delta\iGamma\tabs{\nablaG\phi\hx\tp}^2\dG\\
+\delta^{-1}\iGamma\IhG{G\trkla{\phi\hx\tp}}\dG\,
\end{multline}
\end{subequations}
in Fig.~\ref{fig:energy_cross}.
Thereby, the modified energy obtained from the SAV scheme is denoted by $\widetilde{\mathcal{E}}_{h,\text{SAV}}$, the energy $\mathcal{E}\h$ reconstructed from the results of the SAV scheme by $\mathcal{E}_{h,\text{SAV}}$, and the energy computed via the convex-concave splitting scheme by $\mathcal{E}_{h,\text{cc}}$.
As the congruent lines in Fig.~\ref{fig:energy_cross} indicate, the results are identical for the considered set of parameters.
\begin{figure}
\begin{center}
\includegraphics[width=0.4\textwidth]{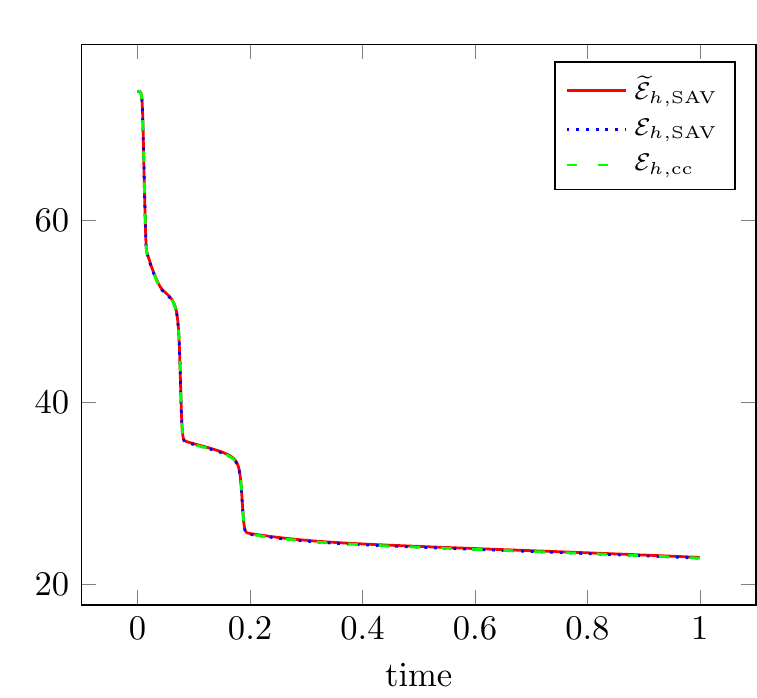}
\caption{Decrease of energy.}
\label{fig:energy_cross}
\end{center}
\end{figure}
\subsection{Adsorption processes}\label{subsec:adsorption}
Next, we test our discrete scheme on the setting proposed in \cite{KnopfLamLiuMetzger2021} which focuses on the influence of the adsorption rate $\xi$.\\
Again, we consider the domain $\Omega:=\trkla{0,1}^2\subset\mathds{R}^2$.
On its left boundary, we place an elliptical shaped droplet with barycenter at $\trkla{0.1,0.5}$, a maximal horizontal elongation of $0.6814$, and a maximal vertical elongation of $0.367$ (see Fig.~\ref{fig:adsorption:initial}).
As we will focus on the adsorption processes, we use very fine spatial and temporal discretizations.
In particular, the spatial discretization parameter $h$ is set to $\sqrt{2}\cdot 2^{-8}$ resulting in $\dim\UhO=66049$ and $\dim\UhG=1024$, while the time interval $\trkla{0,2.5}$ is divided into subintervals of length $\tau=3\cdot10^{-5}$.
The potentials $F$ and $G$ are chosen as shifted polynomial double-well potentials with additive constants $0.001/\tabs{\Omega}$ and $0.001/\tabs{\Gamma}$, respectively.
The other parameters are collected in Tab.~\ref{tab:param2}.\\
The evolution of the droplet is visualized in Fig.~\ref{fig:adsorption:evolution} for $\xi\in\tgkla{0,1,\infty}$.
As for $\xi=0$ the integral of $\phi$ is conserved on $\Omega$ and on $\Gamma$ individually (cf.~\eqref{eq:individualconservation}), the contact area can not change.
However, the elliptical droplet still tries to attain an energetically advantageous shape with constant mean curvature (see Fig.~\ref{fig:adsorption:evolution:xi0}).
For $\xi>0$, this constrained is relaxed to a combined conservation of mass (cf.~\eqref{eq:conservation}).
This allows the contact area to grow (see Fig.~\ref{fig:adsorption:evolution:xi1} and \ref{fig:adsorption:evolution:xiinfty}) and thereby to reduce the total energy even further (cf.~Fig.~\ref{fig:energy_xi}).\\
As in the last section, we compare the results obtained from the SAV scheme with the results obtained by the non-linear scheme discussed in \cite{KnopfLamLiuMetzger2021} by computing the energies. Again, we note that for the considered set of parameters, the energy plots align perfectly.\\
As in \cite{KnopfLamLiuMetzger2021}, we compute the experimental order of convergence for $\xi\searrow0$ and $\xi\nearrow\infty$.
The results are collected in Tab.~\ref{tab:EOC:xi0} and Tab.~\ref{tab:EOC:xiinfty}.
Although the absolute errors are larger than the ones computed in \cite{KnopfLamLiuMetzger2021}, the computed orders of convergence are the same.
\ifnum\value{IMASTYLE}>1 {
\begin{table}[]
\tblcaption{Parameters used in Section \ref{subsec:adsorption}.}
{%
\begin{tabular}{@{}cccccccc@{}}
\tblhead{$\mobO$&$\varepsilon$& $\sigma$& $\mobG$&$\delta$& $\beta$&$\xi$&$T$}
0.01&0.01&2&0.02&0.01&4&$\in\tekla{0,\infty}$&2.5
\lastline
\end{tabular}
}
\label{tab:param2}
\end{table}
} \else {
\begin{table}
\center
\begin{tabular}{c|c|c|c|c|c|c|c}
$\mobO$&$\varepsilon$& $\sigma$& $\mobG$&$\delta$& $\beta$&$\xi$&$T$\\
0.01&0.01&2&0.02&0.01&4&$\in\tekla{0,\infty}$&2.5
\end{tabular}
\caption{Parameters used in Section \ref{subsec:adsorption}.}
\label{tab:param2}
\end{table}
}\fi
\begin{figure}
\begin{center}
\includegraphics[width=0.3\textwidth]{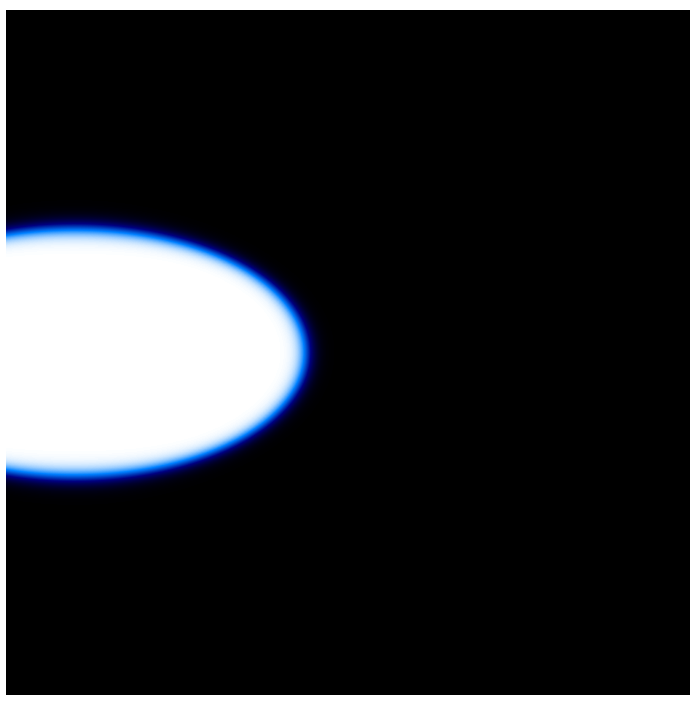}
\caption{Initial datum used in Sec.~\ref{subsec:adsorption}.}
\label{fig:adsorption:initial}
\end{center}
\end{figure}
\begin{figure}
\begin{center}
\newcommand{\scale}{.23\textwidth}
\subfloat[][$\xi=0$]{
\includegraphics[width=\scale]{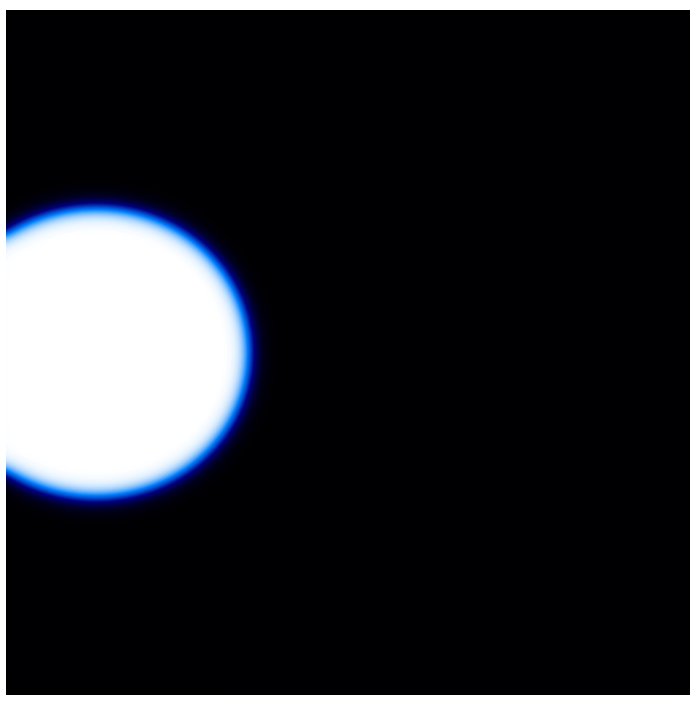}

\includegraphics[width=\scale]{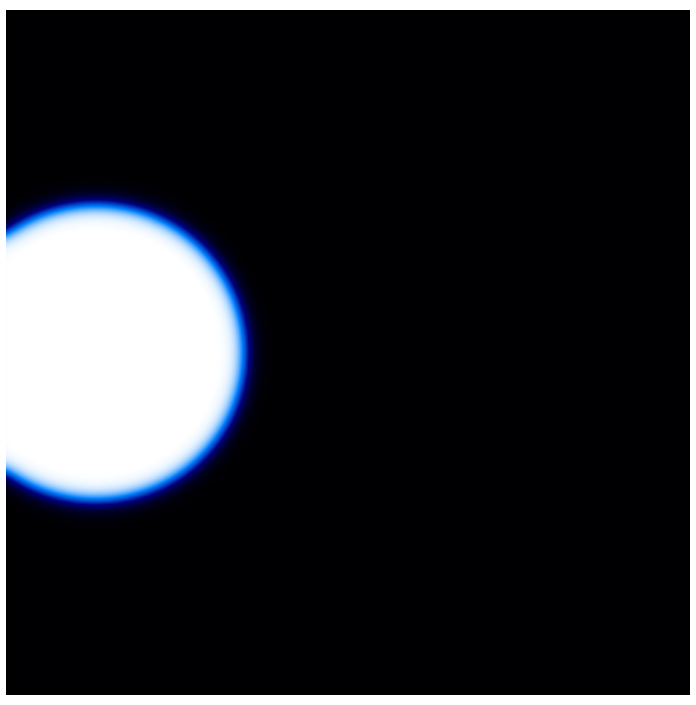}

\includegraphics[width=\scale]{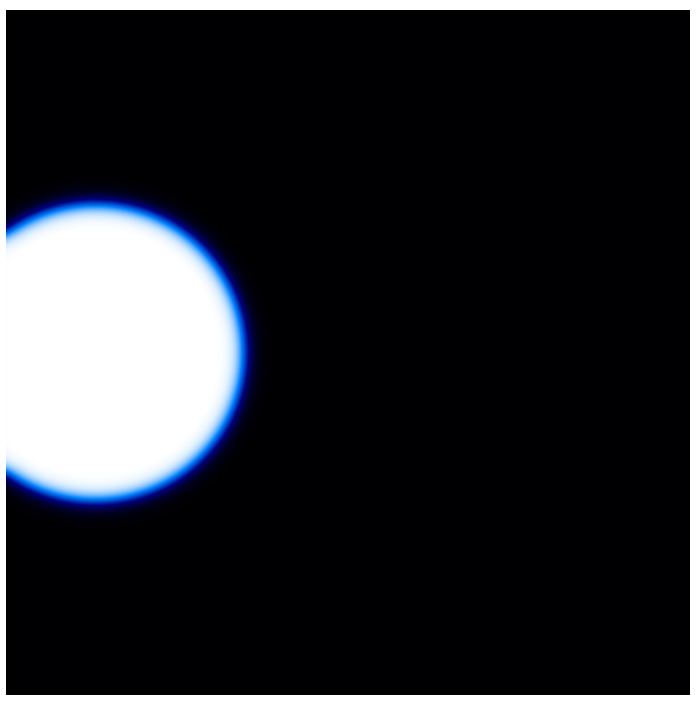}

\includegraphics[width=\scale]{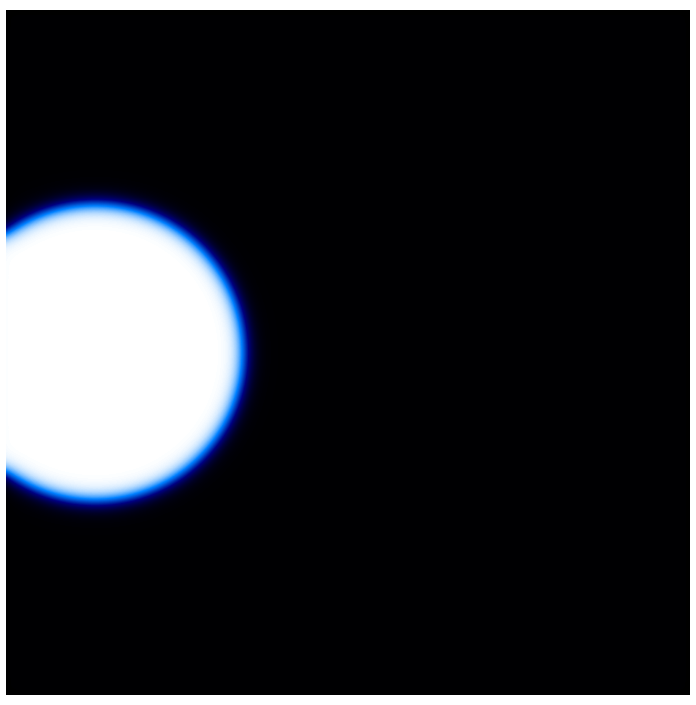}
\label{fig:adsorption:evolution:xi0}
}\\

\subfloat[][$\xi=1$]{
\includegraphics[width=\scale]{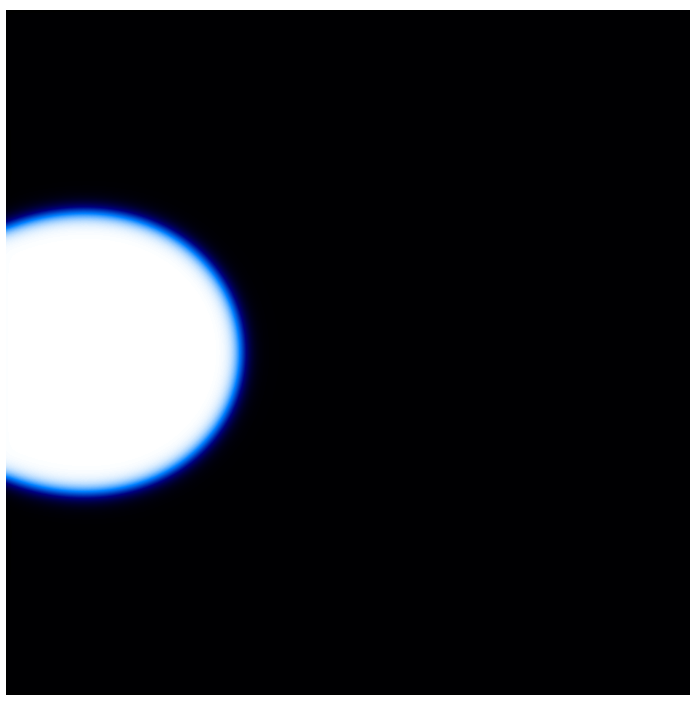}

\includegraphics[width=\scale]{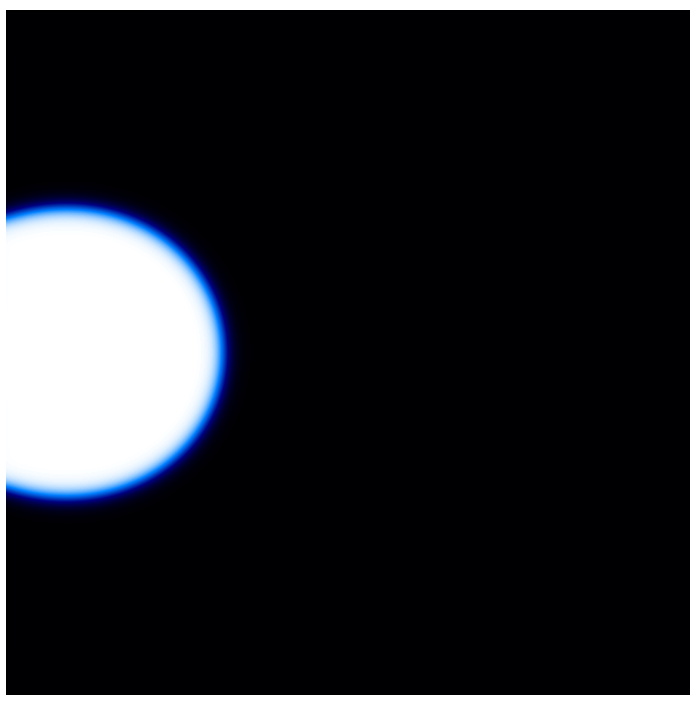}

\includegraphics[width=\scale]{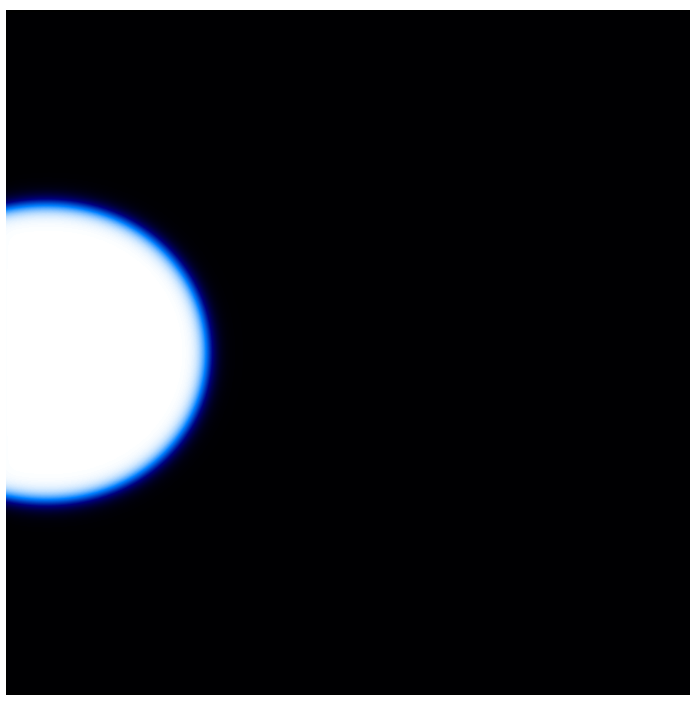}

\includegraphics[width=\scale]{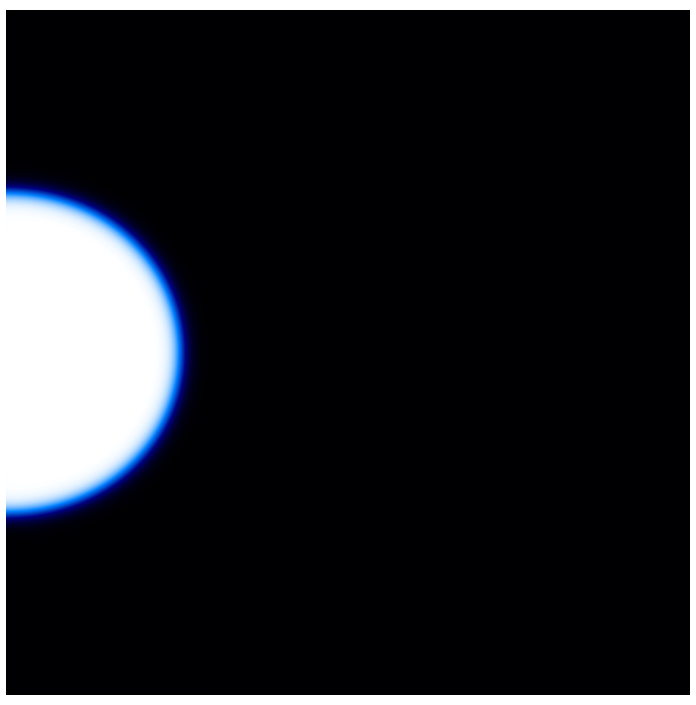}
\label{fig:adsorption:evolution:xi1}
}\\
\subfloat[][$\xi=\infty$]{
\includegraphics[width=\scale]{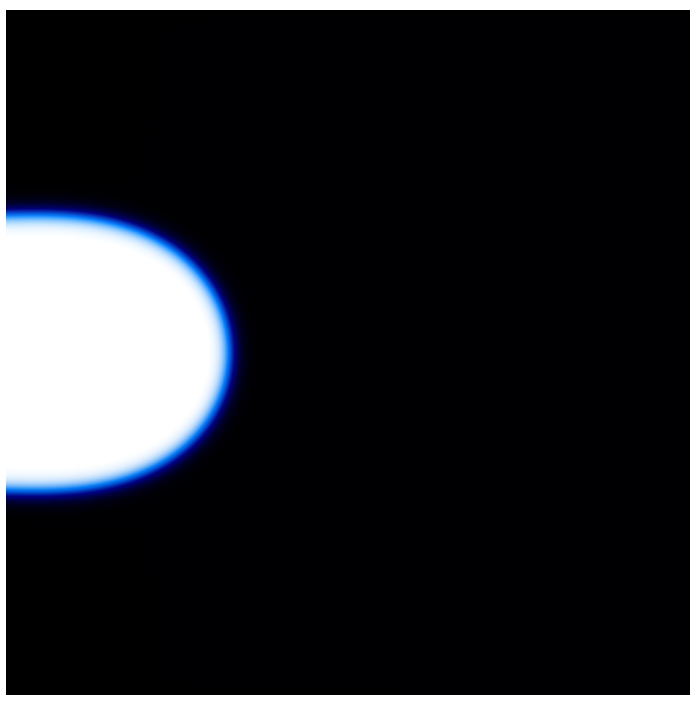}

\includegraphics[width=\scale]{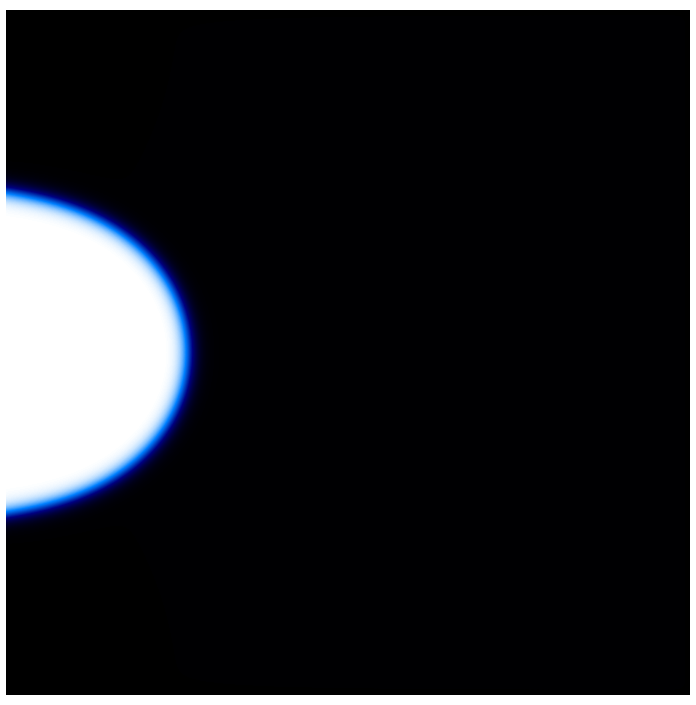}

\includegraphics[width=\scale]{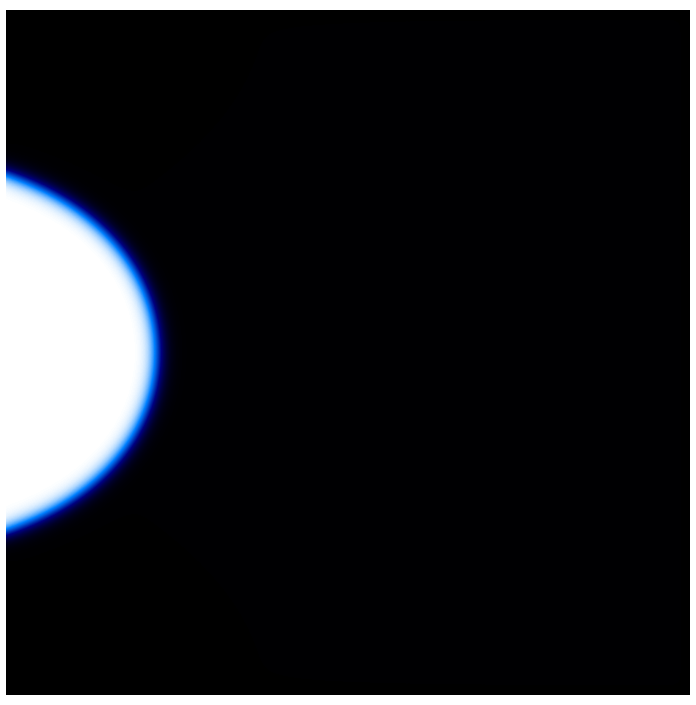}

\includegraphics[width=\scale]{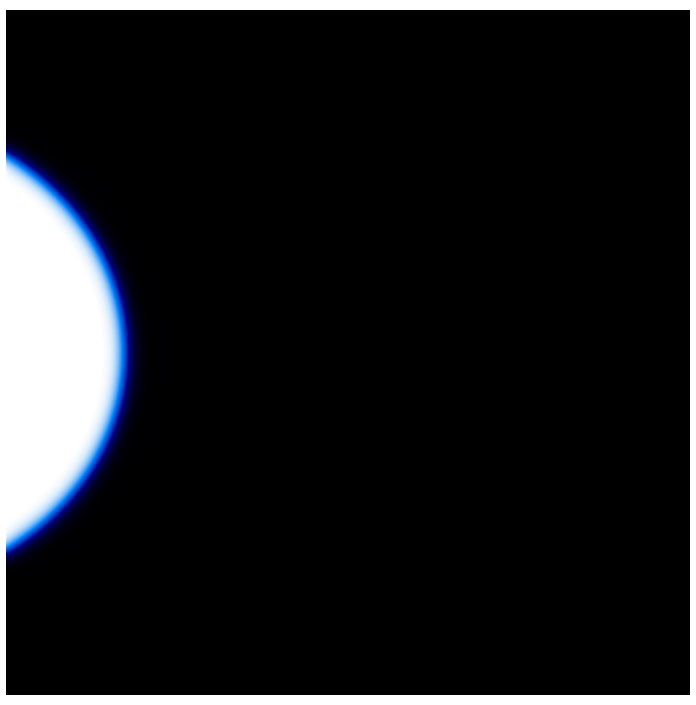}
\label{fig:adsorption:evolution:xiinfty}
}
\caption{Phase-field at $t=0.5$, $t=1.0$, $t=1.5$, and $t=2.5$.}
\label{fig:adsorption:evolution}
\end{center}
\end{figure}

\begin{figure}
\begin{center}
\includegraphics[width=0.3\textwidth]{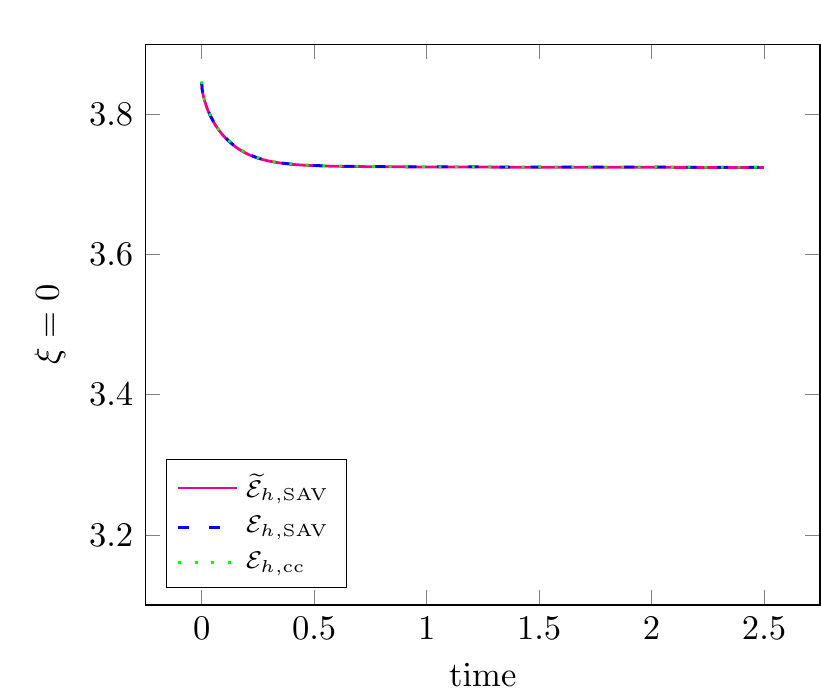}\hfill
\includegraphics[width=0.3\textwidth]{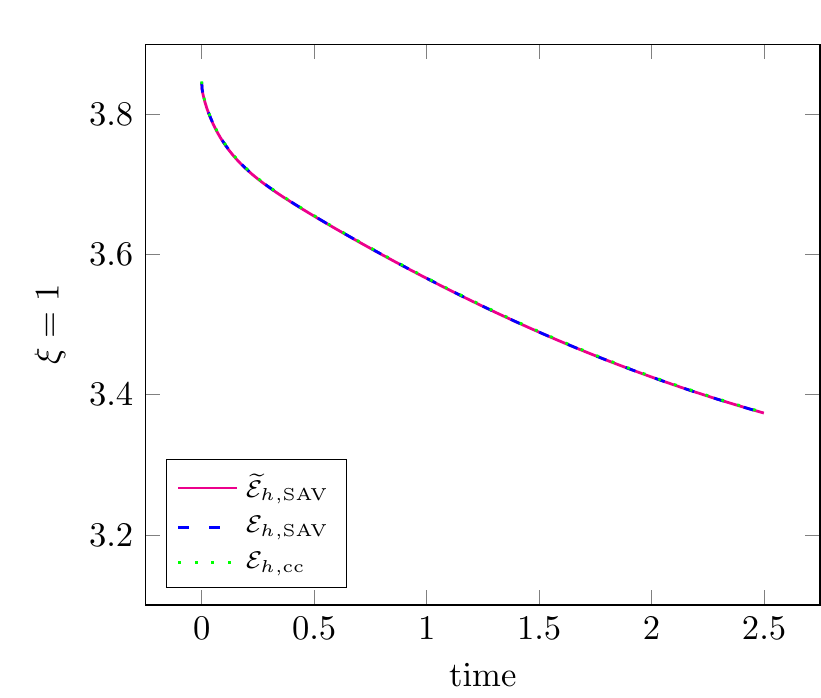}\hfill
\includegraphics[width=0.3\textwidth]{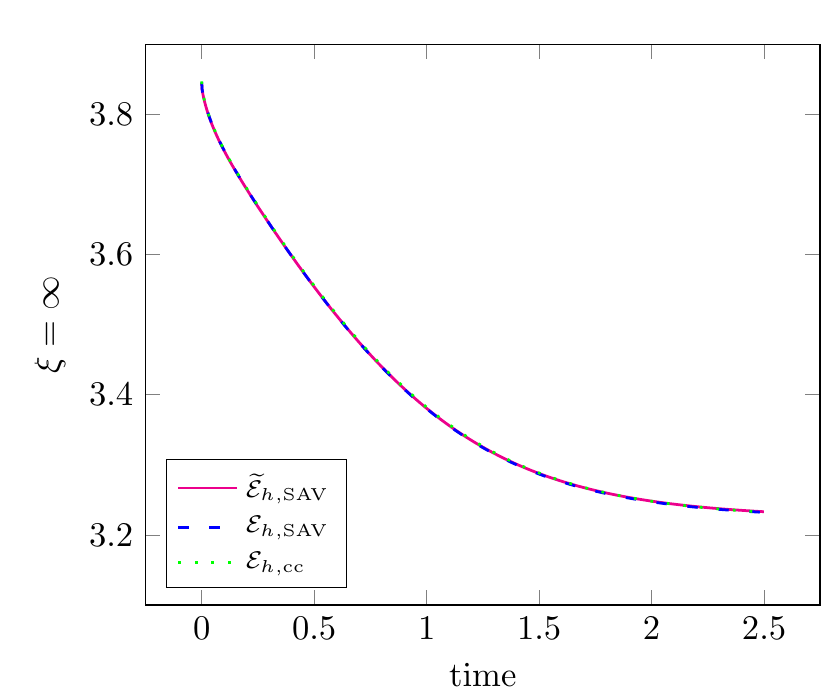}
\caption{Decrease of energy for different values of $\xi$.}
\label{fig:energy_xi}
\end{center}
\end{figure}

\ifnum\value{IMASTYLE}>1 {
\begin{table}[]
\tblcaption{Experimental order of convergence of $\phi$ and $\phi\big\vert_\Gamma$ w.r.t.~$\xi\searrow0$.}
{%
\begin{tabular}{@{}ccccccc@{}}
\tblhead{$\xi$&$\phi$&$\norm{\cdot}_{L^2\trkla{0,T;L^2\trkla{\Omega}}}$ & $\text{EOC}_\xi^\Omega$ &$\phi\big\vert_\Gamma$& $\norm{\cdot}_{L^2\trkla{0,T;L^2\trkla{\Gamma}}}$ & $\text{EOC}_\xi^\Gamma$}
$0.0001$\phz& & 3.11E-04 & - && 3.54E-04 & -\\ 
$0.0002$\phz& & 6.20E-04 & 1.00 & &7.07E-04 & 1.00\\
$0.0003$\phz& & 9.29E-04 & 1.00 && 1.06E-03 & 1.00 \\
$0.0004$\phz& & 1.24E-03 & 1.00 && 1.41E-03 & 1.00\\
$0.0005$\phz& & 1.54E-03 & 0.99 && 1.76E-03 & 1.00\\
$0.00075$& & 2.31E-03 & 0.99 && 2.64E-03 & 0.99\\
$0.001$\phzz& & 3.07E-03 & 0.99 && 3.50E-03 & 0.99\\
$0.01$\phzzz& & 2.72E-02 & 0.95 && 3.21E-02 & 0.96\\
$0.1$\phzzz\phz& & 1.40E-01 & 0.71 && 1.96E-01 & 0.77\\
$1\phantom{.}$\phzzz\phzz& & 3.73E-01 & 0.42&& 6.09E-01 & 0.49
\lastline
\end{tabular}
}
\label{tab:EOC:xi0}
\end{table}
} \else {
\begin{table}
\begin{center}
\subfloat[][EOC for $\phi$.]{
\begin{tabular}{l|cc}
 \diagbox{$\xi^{\phantom{-1}}$}{$\phi\phantom{\big\vert_\Gamma}$}& $\norm{\cdot}_{L^2\trkla{0,T;L^2\trkla{\Omega}}}$ & $\text{EOC}_\xi^\Omega$ \\
 \hline
$0.0001$ & 3.11E-04 & - \\ 
$0.0002$ & 6.20E-04 & 1.00 \\
$0.0003$ & 9.29E-04 & 1.00 \\
$0.0004$ & 1.24E-03 & 1.00 \\
$0.0005$ & 1.54E-03 & 0.99 \\
$0.00075$ & 2.31E-03 & 0.99 \\
$0.001$ & 3.07E-03 & 0.99 \\
$0.01$ & 2.72E-02 & 0.95\\
$0.1$ & 1.40E-01 & 0.71\\
$1$ & 3.73E-01 & 0.42 \\
\end{tabular}
}\hfill
\subfloat[][EOC for $\phi\big\vert_{\Gamma}$.]{
\begin{tabular}{l|cc}
 \diagbox{$\xi^{~}$}{$\phi\big\vert_{\Gamma}$}& $\norm{\cdot}_{L^2\trkla{0,T;L^2\trkla{\Gamma}}}$ & $\text{EOC}_\xi^\Gamma$ \\
 \hline
$0.0001$ & 3.54E-04 & - \\ 
$0.0002$ & 7.07E-04 & 1.00 \\
$0.0003$ & 1.06E-03 & 1.00 \\
$0.0004$ & 1.41E-03 & 1.00 \\
$0.0005$ & 1.76E-03 & 1.00 \\
$0.00075$ & 2.64E-03 & 0.99 \\
$0.001$ & 3.50E-03 & 0.99 \\
$0.01$ & 3.21E-02 & 0.96\\
$0.1$ & 1.96E-01 & 0.77\\
$1$ & 6.09E-01 & 0.49 \\
\end{tabular}
}
\caption{Experimental order of convergence for $\xi\searrow0$.}
\label{tab:EOC:xi0}
\end{center}
\end{table}
}\fi

\ifnum\value{IMASTYLE}>1 {
\begin{table}[]
\tblcaption{Experimental order of convergence of $\phi$ and $\phi\big\vert_\Gamma$ w.r.t.~$\xi^{-1}\searrow0$.}
{%
\begin{tabular}{@{}ccccccc@{}}
\tblhead{$\xi^{-1}$&$\phi$&$\norm{\cdot}_{L^2\trkla{0,T;L^2\trkla{\Omega}}}$ & $\text{EOC}_{\xi^{-1}}^\Omega$ &$\phi\big\vert_\Gamma$& $\norm{\cdot}_{L^2\trkla{0,T;L^2\trkla{\Gamma}}}$ & $\text{EOC}_{\xi^{-1}}^\Gamma$}
$0.0001$\phz& & 2.85E-04 &  -&& 4.83E-04 & -\\ 
$0.0002$\phz && 5.69E-04 & 1.00 && 9.64E-04 & 1.00\\
$0.0003$\phz && 8.52E-04 & 1.00 && 1.44E-03 & 1.00\\
$0.0004$\phz && 1.13E-03 & 1.00 && 1.92E-03 & 1.00\\
$0.0005$\phz& & 1.42E-03 & 0.99 && 2.40E-03 & 0.99\\
$0.00075$ && 2.12E-03 & 0.99 && 3.59E-03 & 0.99\\
$0.001$\phzz && 2.81E-03 & 0.99 && 4.77E-03 & 0.99\\
$0.01$\phzzz && 2.54E-02 & 0.96&& 4.31E-02 & 0.96\\
$0.1$\phzzz\phz && 1.54E-01 & 0.78&&2.64E-01 & 0.79\\
$1\phantom{.}$\phzzz\phzz && 4.02E-01 & 0.42 &&7.10E-01 & 0.43
\lastline
\end{tabular}
}
\label{tab:EOC:xiinfty}
\end{table}
} \else {
\begin{table}
\begin{center}
\subfloat[][EOC for $\phi$.]{
\begin{tabular}{l|cc}
 \diagbox{$\xi^{-1}$}{$\phi\phantom{\big\vert_\Gamma}$}& $\norm{\cdot}_{L^2\trkla{0,T;L^2\trkla{\Omega}}}$ & $\text{EOC}_{\xi^{-1}}^\Omega$ \\
 \hline 
$0.0001$ & 2.85E-04 &  -\\ 
$0.0002$ & 5.69E-04 & 1.00 \\
$0.0003$ & 8.52E-04 & 1.00 \\
$0.0004$ & 1.13E-03 & 1.00 \\
$0.0005$ & 1.42E-03 & 0.99 \\
$0.00075$ & 2.12E-03 & 0.99 \\
$0.001$ & 2.81E-03 & 0.99 \\
$0.01$ & 2.54E-02 & 0.96\\
$0.1$ & 1.54E-01 & 0.78\\
$1$ & 4.02E-01 & 0.42 \\
\end{tabular}
}\hfill
\subfloat[][EOC for $\phi\big\vert_{\Gamma}$.]{
\begin{tabular}{l|cc}
 \diagbox{$\xi^{-1}$}{$\phi\big\vert_{\Gamma}$}& $\norm{\cdot}_{L^2\trkla{0,T;L^2\trkla{\Gamma}}}$ & $\text{EOC}_{\xi^{-1}}^\Gamma$ \\
 \hline
$0.0001$ & 4.83E-04 & - \\ 
$0.0002$ & 9.64E-04 & 1.00 \\
$0.0003$ & 1.44E-03 & 1.00 \\
$0.0004$ & 1.92E-03 & 1.00 \\
$0.0005$ & 2.40E-03 & 0.99 \\
$0.00075$ & 3.59E-03 & 0.99 \\
$0.001$ & 4.77E-03 & 0.99 \\
$0.01$ & 4.31E-02 & 0.96\\
$0.1$ & 2.64E-01 & 0.79\\
$1$ & 7.10E-01 & 0.43 \\
\end{tabular} 
}
\caption{Experimental order of convergence for $\xi^{-1}\searrow0$.}
\label{tab:EOC:xiinfty}
\end{center}
\end{table}
}\fi
\bibliographystyle{amsplain}

\begin{thebibliography}{10}

\bibitem{AbelsWilke2007}
H.~Abels and M.~Wilke, \emph{{Convergence to equilibrium for the
  Cahn–Hilliard equation with a logarithmic free energy}}, Nonlinear
  Analysis: Theory, Methods \& Applications \textbf{67} (2007), no.~11, 3176 --
  3193.

\bibitem{Alt2016}
H.~W. Alt, \emph{{Linear functional analysis: An application-oriented
  introduction}}, Universitext, Springer, London, 2016.

\bibitem{Badia2011a}
S.~Badia, F.~Guill\'en-Gonz\'alez, and J.~V. Guti\'errez-Santacreu,
  \emph{Finite element approximation of nematic liquid crystal flows using a
  saddle-point structure}, J. Comput. Phys. \textbf{230} (2011), no.~4,
  1686--1706.

\bibitem{BaoZhang21a}
X.~Bao and H.~Zhang, \emph{Numerical approximations and error analysis of the
  cahn-hilliard equation with dynamic boundary conditions}, Commun. Math. Sci.
  \textbf{19} (2021), no.~3, 663--685.

\bibitem{BaoZhang21b}
\bysame, \emph{Numerical approximations and error analysis of the cahn-hilliard
  equation with reaction rate dependent dynamic boundary conditions}, J. Sci.
  Comput. \textbf{87} (2021), no.~3, Paper No. 72, 32.

\bibitem{Barrett1999}
J.~W. Barrett, J.~F. Blowey, and H.~Garcke, \emph{{Finite Element Approximation
  of the Cahn--Hilliard Equation with Degenerate Mobility}}, {SIAM} J. Numer.
  Anal. \textbf{37} (1999), no.~1, 286--318.

\bibitem{Barrett2001}
\bysame, \emph{{On fully practical finite element approximations of degenerate
  Cahn-Hilliard systems}}, {ESAIM}: Math. Model. Numer. Anal. \textbf{35}
  (2001), no.~4, 713--748.

\bibitem{BatesFife93}
P.~W. Bates and P.~C. Fife, \emph{{The Dynamics of Nucleation for the
  Cahn–Hilliard Equation}}, SIAM Journal on Applied Mathematics \textbf{53}
  (1993), no.~4, 990--1008.

\bibitem{Blowey1996}
J.~F. Blowey, M.~I.~M. Copetti, and C.~M. Elliott, \emph{Numerical analysis of
  a model for phase separation of a multi-component alloy}, {IMA} Journal of
  Numerical Analysis \textbf{16} (1996), no.~1, 111--139.

\bibitem{Blowey1991}
J.~F. Blowey and C.~M. Elliott, \emph{{The Cahn--Hilliard gradient theory for
  phase separation with non-smooth free energy Part I: Mathematical analysis}},
  European Journal of Applied Mathematics \textbf{2} (1991), no.~3, 233--280.

\bibitem{Blowey1992}
\bysame, \emph{{The Cahn--Hilliard gradient theory for phase separation with
  non-smooth free energy Part {II}: Numerical analysis}}, European Journal of
  Applied Mathematics \textbf{3} (1992), no.~2, 147--179.

\bibitem{BouchritiPierreAlaa20}
A.~Bouchriti, M.~Pierre, and N.~E. Alaa, \emph{{Remarks on the asymptotic
  behavior of scalar auxiliary variable (SAV) schemes for gradient-like
  flows}}, J. Appl. Anal. Comput. \textbf{10} (2020), no.~5, 2198--2219.

\bibitem{BrennerScott}
S.~C. Brenner and L.~R. Scott, \emph{The {M}athematical {T}heory of {F}inite
  {E}lement {M}ethods}, Springer, New York, 2002.

\bibitem{Cahn1958}
J.~W. Cahn and J.~E. Hilliard, \emph{{Free Energy of a Nonuniform System. I.
  Interfacial Free Energy}}, J. Chem. Phys. \textbf{28} (1958), no.~2,
  258--267.

\bibitem{Campillo2012}
E.~Campillo-Funollet, G.~Gr\"{u}n, and F.~Klingbeil, \emph{On modeling and
  simulation of electrokinetic phenomena in two-phase flow with general mass
  densities}, SIAM Journal on Applied Mathematics \textbf{72} (2012), no.~6,
  1899--1925.

\bibitem{ChengYangShen2017}
Q.~Cheng, X.~Yang, and J.~Shen, \emph{{Efficient and accurate numerical schemes
  for a hydro-dynamically coupled phase field diblock copolymer model}}, J.
  Comput. Phys. \textbf{341} (2017), 44 -- 60.

\bibitem{Cherfils2011}
L.~Cherfils, A.~Miranville, and S.~Zelik, \emph{{The Cahn-Hilliard Equation
  with Logarithmic Potentials}}, Milan Journal of Mathematics \textbf{79}
  (2011), no.~2, 561--596.

\bibitem{ChillFasangovaPruess2006}
R.~Chill, E.~Fa\v{s}angov\'a, and J.~Pr\"uss, \emph{{Convergence to steady
  states of solutions of the Cahn–Hilliard and Caginalp equations with
  dynamic boundary conditions}}, Mathematische Nachrichten \textbf{279} (2006),
  no.~13-14, 1448--1462.

\bibitem{Colli2015}
P.~Colli and T.~Fukao, \emph{{Cahn--Hilliard equation with dynamic boundary
  conditions and mass constraint on the boundary}}, Journal of Mathematical
  Analysis and Applications \textbf{429} (2015), no.~2, 1190--1213.

\bibitem{Colli2014}
P.~Colli, G.~Gilardi, and J.~Sprekels, \emph{{On the Cahn--Hilliard equation
  with dynamic boundary conditions and a dominating boundary potential}},
  Journal of Mathematical Analysis and Applications \textbf{419} (2014), no.~2,
  972--994.

\bibitem{CopettiElliott92}
M.~I.~M. Copetti and C.~M. Elliott, \emph{{Numerical analysis of the
  Cahn--Hilliard equation with a logarithmic free Energy}}, Numer. Math.
  \textbf{63} (1992), 39--65.

\bibitem{Elliott1996}
C.~Elliott and H.~Garcke, \emph{{On the {C}ahn--{H}illiard Equation with
  Degenerate Mobility}}, SIAM Journal on Mathematical Analysis \textbf{27}
  (1996), no.~2, 404--423.

\bibitem{Elliott1993}
C.~M. Elliott and A.~M. Stuart, \emph{{The Global Dynamics of Discrete
  Semilinear Parabolic Equations}}, {SIAM} J. Numer. Anal. \textbf{30} (1993),
  no.~6, 1622--1663.

\bibitem{Elliott86}
C.~M. Elliott and S.~Zheng, \emph{{On the Cahn--Hilliard equation}}, Arch.
  Ration. Mech. Anal. \textbf{96} (1986), no.~4, 339--357.

\bibitem{Eyre98}
D.~J. Eyre, \emph{Unconditionally gradient stable time marching the
  cahn-hilliard equation}, Computational and Mathematical Models of
  Microstructural Evolution \textbf{529} (1998), 39--46.

\bibitem{Fischer1998}
H.~P. Fischer, P.~Maass, and W.~Dieterich, \emph{Diverging time and length
  scales of spinodal decomposition modes in thin films}, Europhys. Lett.
  \textbf{42} (1998), no.~1, 49--54.

\bibitem{Fischer1997}
H.~P. Fischer, Ph. Maass, and W.~Dieterich, \emph{{Novel Surface Modes in
  Spinodal Decomposition}}, Phys. Rev. Lett. \textbf{79} (1997), no.~5,
  893--896.

\bibitem{Fischer1998a}
H.~P. Fischer, J.~Reinhard, W.~Dieterich, J.-F. Gouyet, P.~Maass, A.~Majhofer,
  and D.~Reinel, \emph{Time-dependent density functional theory and the
  kinetics of lattice gas systems in contact with a wall}, J. Chem. Phys.
  \textbf{108} (1998), no.~7, 3028--3037.

\bibitem{Frank2020}
F.~Frank, A.~Rupp, and D.~Kuzmin, \emph{{Bound-preserving flux limiting schemes
  for {DG}~discretizations of conservation laws with applications to the
  Cahn--Hilliard equation}}, Computer Methods in Applied Mechanics and
  Engineering \textbf{359} (2020), 112665.

\bibitem{Gal2006}
C.~G. Gal, \emph{{A Cahn--Hilliard model in bounded domains with permeable
  walls}}, Math. Mod. Meth. Appl. Sci. \textbf{29} (2006), no.~17, 2009--2036.

\bibitem{GarckeHinzeKahle2016}
H.~Garcke, M.~Hinze, and C.~Kahle, \emph{{A stable and linear time
  discretization for a thermodynamically consistent model for two-phase
  incompressible flow}}, Applied Numerical Mathematics \textbf{99} (2016),
  151--171.

\bibitem{GarckeKnopf20}
H.~Garcke and P.~Knopf, \emph{{Weak Solutions of the Cahn--Hilliard System with
  Dynamic Boundary Conditions: A Gradient Flow Approach}}, SIAM J. Math. Anal.
  \textbf{52} (2020), no.~1, 340--369.

\bibitem{GarckeKnopfYayla2022}
H.~Garcke, P.~Knopf, and S.~Yayla, \emph{Long-time dynamics of the
  cahn–hilliard equation with kinetic rate dependent dynamic boundary
  conditions}, Nonlinear Analysis \textbf{215} (2022), 112619.

\bibitem{Gilardi2009}
G.~Gilardi, A.~Miranville, and G.~Schimperna, \emph{{On the Cahn--Hilliard
  equation with irregular potentials and dynamic boundary conditions}},
  Communications on Pure {\&} Applied Analysis \textbf{8} (2009), no.~3,
  881--912.

\bibitem{Goldstein2011}
G.~R. Goldstein, A.~Miranville, and G.~Schimperna, \emph{{A Cahn--Hilliard
  model in a domain with non-permeable walls}}, Physica D: Nonlinear Phenomena
  \textbf{240} (2011), no.~8, 754--766.

\bibitem{Grinfeld1995}
M.~Grinfeld and A.~Novick-Cohen, \emph{{Counting stationary solutions of the
  Cahn--Hilliard equation by transversality arguments}}, Proceedings of the
  Royal Society of Edinburgh: Section A Mathematics \textbf{125} (1995), no.~2,
  351--370.

\bibitem{Grun2013}
G.~Gr\"{u}n, \emph{On convergent schemes for diffuse interface models for
  two-phase flow of incompressible fluids with general mass densities}, SIAM J.
  Numer. Anal. \textbf{51} (2013), no.~6, 3036--3061.

\bibitem{GrunGuillenMetzger2016}
G.~Gr\"{u}n, F.~Guill\'en-Gonz\'alez, and S.~Metzger, \emph{On fully decoupled,
  convergent schemes for diffuse interface models for two-phase flow with
  general mass densities}, Commun. Comput. Phys. \textbf{19} (2016), no.~5,
  1473--1502.

\bibitem{Grun2013c}
G.~Gr\"{u}n and F.~Klingbeil, \emph{Two-phase flow with mass density contrast:
  stable schemes for a thermodynamic consistent and frame-indifferent
  diffuse-interface model}, J. Comput. Phys. \textbf{257, Part A} (2014),
  708--725.

\bibitem{Guillen-Tierra13}
F.~Guill\'en-Gonz\'alez and G.~Tierra, \emph{{On linear schemes for a
  Cahn--Hilliard diffuse interface model}}, J. Comput. Phys. \textbf{234}
  (2013), 140--171.

\bibitem{Han2017}
D.~Han, A.~Brylev, X.~Yang, and Z.~Tan, \emph{{Numerical Analysis of Second
  Order, Fully Discrete Energy Stable Schemes for Phase Field Models of
  Two-Phase Incompressible Flows}}, J. Sci. Comput. \textbf{70} (2017), no.~3,
  965--989.

\bibitem{HarderKovacs21}
P.~Harder and B.~Kov\'acs, \emph{{Error estimates for the Cahn--Hilliard
  equation with dynamic boundary conditions}}, IMA J. Numer. Anal. (2021), 32
  pages.

\bibitem{Kenzler2001}
R.~Kenzler, F.~Eurich, P.~Maass, B.~Rinn, J.~Schropp, E.~Bohl, and
  W.~Dieterich, \emph{{Phase separation in confined geometries: Solving the
  Cahn–Hilliard equation with generic boundary conditions}}, Computer Physics
  Communications \textbf{133} (2001), no.~2, 139 -- 157.

\bibitem{KnopfLam20}
P.~Knopf and K.~F. Lam, \emph{{Convergence of a Robin boundary approximation
  for a Cahn{\textendash}Hilliard system with dynamic boundary conditions}},
  Nonlinearity \textbf{33} (2020), no.~8, 4191--4235.

\bibitem{KnopfLamLiuMetzger2021}
P.~Knopf, K.~F. Lam, C.~Liu, and S.~Metzger, \emph{{Phase-field dynamics with
  transfer of materials: The Cahn{\textendash}Hilliard equation with reaction
  rate dependent dynamic boundary conditions}}, {ESAIM}: Math. Model. Numer.
  Anal. \textbf{55} (2021), no.~1, 229--282.

\bibitem{KnopfSignori21}
P.~Knopf and A.~Signori, \emph{{On the nonlocal Cahn--Hilliard equation with
  nonlocal dynamic boundary condition and boundary penalization}}, J. Differ.
  Equ. \textbf{280} (2021), 236--291.

\bibitem{Kufner77}
A.~Kufner, O.~John, and S.~Fu\v{c}\'{i}k, \emph{{Function spaces}}, Noordhoff,
  1977.

\bibitem{LiShen2020}
X.~Li and J.~Shen, \emph{{On a SAV-MAC scheme for the
  Cahn–Hilliard–Navier–Stokes phase-field model and its error analysis
  for the corresponding Cahn–Hilliard–Stokes case}}, Math. Models Methods
  Appl. Sci. \textbf{30} (2020), no.~12, 2263--2297.

\bibitem{Liero2013}
M.~Liero, \emph{{Passing from bulk to bulk-surface evolution in the Allen--Cahn
  equation}}, Nonlinear Differential Equations and Applications {NoDEA}
  \textbf{20} (2013), no.~3, 919--942.

\bibitem{LinCaoZhangSun20}
S.~Lin, J.~Cao, J.~Zhang, and T.~Sun, \emph{{Stability and convergence analysis
  for a new phase field crystal model with a nonlocal Lagrange multiplier}},
  Authorea (2020), 12 pages.

\bibitem{LiuShenYang2015}
C.~Liu, J.~Shen, and X.~Yang, \emph{{Decoupled Energy Stable Schemes for a
  Phase-Field Model of Two-Phase Incompressible Flows with Variable Density}},
  SIAM J. Sci. Comput. \textbf{62} (2015), no.~2, 601--622.

\bibitem{Liu2019}
C.~Liu and H.~Wu, \emph{{An Energetic Variational Approach for the
  Cahn--Hilliard Equation with Dynamic Boundary Condition: Model Derivation and
  Mathematical Analysis}}, Arch. Ration. Mech. Anal. \textbf{233} (2019),
  no.~1, 167--247.

\bibitem{Metzger_2018b}
S.~Metzger, \emph{On convergent schemes for two-phase flow of dilute polymeric
  solutions}, {ESAIM}: Math. Model. Numer. Anal. \textbf{52} (2019), no.~6,
  2357--2408.

\bibitem{Metzger2018}
S.~Metzger, \emph{On stable, dissipation reducing splitting schemes for
  two-phase flow of electrolyte solutions}, Numerical Algorithms \textbf{80}
  (2019), no.~4, 1361--1390.

\bibitem{Metzger2021a}
\bysame, \emph{{An Efficient and Convergent Finite Element Scheme for
  Cahn--Hilliard Equations with Dynamic Boundary Conditions}}, {SIAM} J. Numer.
  Anal. \textbf{59} (2021), no.~1, 219--248.

\bibitem{Metzger2020}
\bysame, \emph{A convergent finite element scheme for a fourth-order liquid
  crystal model}, IMA J. Numer. Anal. \textbf{42} (2022), no.~1, 440--486,
  published online 2020.

\bibitem{Mininni2017}
R.~M. Mininni, A.~Miranville, and S.~Romanelli, \emph{{Higher-order
  Cahn--Hilliard equations with dynamic boundary conditions}}, Journal of
  Mathematical Analysis and Applications \textbf{449} (2017), no.~2,
  1321--1339.

\bibitem{MiranvilleWu20}
A.~Miranville and H.~Wu, \emph{Long-time behavior of the cahn–hilliard
  equation with dynamic boundary condition}, J. Elliptic Parabol. Equ.
  \textbf{6} (2020), no.~1, 283--309.

\bibitem{Miranville2010}
A.~Miranville and S.~Zelik, \emph{{The Cahn--Hilliard equation with singular
  potentials and dynamic boundary conditions}}, Discrete {\&} Continuous
  Dynamical Systems - A \textbf{28} (2010), no.~1, 275--310.

\bibitem{Necas2012}
J.~Ne{\v{c}}as, \emph{{Direct Methods in the Theory of Elliptic Equations}},
  Springer Berlin Heidelberg, 2012.

\bibitem{Nochetto2016}
R.~H. Nochetto, A.~J. Salgado, and I.~Tomas, \emph{A diffuse interface model
  for two-phase ferrofluid flows}, Comput. Methods Appl. Mech. Eng.
  \textbf{309} (2016), 497--531.

\bibitem{Pego89}
R.~L. Pego, \emph{{Front migration in the nonlinear Cahn--Hilliard equation}},
  Proceedings of the Royal Society of London. A. Mathematical and Physical
  Sciences \textbf{422} (1989), no.~1863, 261--278.

\bibitem{Qian2006}
T.~Qian, X.~Wang, and P.~Sheng, \emph{A variational approach to the moving
  contact line hydrodynamics}, J. Fluid Mech. \textbf{564} (2006), 333--360.

\bibitem{RackeZheng2003}
R.~Racke and S.~Zheng, \emph{{The Cahn--Hilliard equation with dynamic boundary
  conditions}}, Adv. Partial Differential Equations \textbf{8} (2003), 83--110.

\bibitem{RybkaHoffmann99}
P.~Rybka and K.-H. Hoffmann, \emph{{Convergence of solutions to Cahn--Hilliard
  equation}}, Communications in Partial Differential Equations \textbf{24}
  (1999), no.~5-6, 1055--1077.

\bibitem{ShenWangWangWise2012}
J.~Shen, C.~Wang, X.~Wang, and S.~Wise, \emph{{Second-order Convex Splitting
  Schemes for Gradient Flows with Ehrlich–Schwoebel Type Energy: Application
  to Thin Film Epitaxy}}, SIAM J. Numer. Anal. \textbf{50} (2012), no.~1,
  105--125.

\bibitem{ShenXu18}
J.~Shen and J.~Xu, \emph{{Convergence and Error Analysis for the Scalar
  Auxiliary Variable (SAV) Schemes to Gradient Flows}}, SIAM J. Numer. Anal.
  \textbf{56} (2018), no.~5, 2895--2912.

\bibitem{ShenXuYang2018}
J.~Shen, J.~Xu, and J.~Yang, \emph{{The scalar auxiliary variable (SAV)
  approach for gradient flows}}, J. Comput. Phys. \textbf{353} (2018),
  407--416.

\bibitem{ShenXuYang19}
\bysame, \emph{{A New Class of Efficient and Robust Energy Stable Schemes for
  Gradient Flows}}, SIAM Rev. \textbf{61} (2019), no.~3, 474--506.

\bibitem{ShenYang2010}
J.~Shen and X.~Yang, \emph{{Numerical approximations of Allen-Cahn and
  Cahn-Hilliard equations}}, Discrete Cont. Dyn. S. \textbf{28} (2010), no.~4,
  1669--1691.

\bibitem{ShenYang2010b}
\bysame, \emph{A phase-field model and its numerical approximation for
  two-phase incompressible flows with different densities and viscosities}, J.
  Sci. Comput. \textbf{32} (2010), no.~3, 1159--1179.

\bibitem{ShenYang20}
\bysame, \emph{{The IEQ and SAV approaches and their extensions for aclass of
  highly nonlinear gradient flow systems}}, Contemp. Math. \textbf{754} (2020),
  217--245.

\bibitem{Simon1987}
J.~Simon, \emph{Compact sets in the space {${L}\sp p(0,T;B)$}}, Ann. di Mat.
  Pura ed Appl. ({IV}) \textbf{146} (1987), 65--96. \MR{916688 (89c:46055)}

\bibitem{WiseWangLowengrub2009}
S.~Wise, C.~Wang, and J.~Lowengrub, \emph{{An Energy-Stable and Convergent
  Finite-Difference Scheme for the Phase Field Crystal Equation}}, SIAM J.
  Numer. Anal. \textbf{47} (2009), no.~3, 2269--2288.

\bibitem{Wu2004}
H.~Wu and S.~Zheng, \emph{{Convergence to equilibrium for the
  Cahn{\textendash}Hilliard equation with dynamic boundary conditions}},
  Journal of Differential Equations \textbf{204} (2004), no.~2, 511--531.

\bibitem{XuTang2006}
C.~Xu and T.~Tang, \emph{{Stability Analysis of Large Time-Stepping Methods for
  Epitaxial Growth Models}}, SIAM J. Numer. Anal. \textbf{44} (2006), no.~4,
  1759--1779.

\bibitem{Yang2016}
X.~Yang, \emph{Linear, first and second-order, unconditionally energy stable
  numerical schemes for the phase field model of homopolymer blends}, J.
  Comput. Phys. \textbf{327} (2016), 294--316.

\bibitem{YangYu2018}
X.~Yang and H.~Yu, \emph{{Efficient Second Order Unconditionally Stable Schemes
  for a Phase Field Moving Contact Line Model Using an Invariant Energy
  Quadratization Approach}}, SIAM J. Sci. Comput. \textbf{40} (2018), no.~3,
  B889--B914.

\bibitem{YangZhang20}
X.~Yang and G.-D. Zhang, \emph{{Convergence Analysis for the Invariant Energy
  Quadratization (IEQ) Schemes for Solving the Cahn–Hilliard and Allen–Cahn
  Equations with General Nonlinear Potential}}, J. Sci. Comput. \textbf{82}
  (2020), no.~3, 28 pages.

\bibitem{Zheng1986}
S.~Zheng, \emph{{Asymptotic behavior of solution to the Cahn--Hillard
  equation}}, Applicable Analysis \textbf{23} (1986), no.~3, 165--184.

\end{thebibliography}
\providecommand{\bysame}{\leavevmode\hbox to3em{\hrulefill}\thinspace}
\providecommand{\MR}{\relax\ifhmode\unskip\space\fi MR }
\providecommand{\MRhref}[2]{%
  \href{http://www.ams.org/mathscinet-getitem?mr=#1}{#2}
}
\providecommand{\href}[2]{#2}

\end{document}